\documentclass{amsart}
\usepackage[T1]{fontenc}
\usepackage[utf8]{inputenc}
\usepackage[english]{babel}
\usepackage{lmodern}
\usepackage{amsmath,amssymb,amsthm,mathrsfs,yhmath}
\usepackage{todonotes}
\usepackage{xcolor}
\usepackage{hyperref}
\definecolor{darkblue}{RGB}{0,0,160}
\hypersetup{
  colorlinks=true,
  citecolor=darkblue,
  filecolor=black,
  linkcolor=darkblue,
  urlcolor=darkblue,
  pdftitle={A fully faithful p-adic Riemann--Hilbert functor for filtered coherent D-modules},
  pdfauthor={Finn Wiersig}
}

\theoremstyle{plain}
\newtheorem{thm}{Theorem}[section]
\newtheorem{lem}[thm]{Lemma}

\newtheorem{cor}[thm]{Corollary}
\newtheorem{prop}[thm]{Proposition}

\newtheorem{introthm}{Theorem}[section]

\theoremstyle{definition}
\newtheorem{defn}[thm]{Definition}

\newtheorem{construction}[thm]{Construction}

\theoremstyle{remark}

\newtheorem{remark}[thm]{Remark}

\numberwithin{equation}{section}

\makeatletter
\def\cal@symb#1|#2{\expandafter\def\csname #2#1\endcsname{\mathcal{#1}}}
\def\calsymbols#1#2{\@for\@tmpz:=#2\do{\expandafter\cal@symb\@tmpz|{#1}}}
\def\bB@symb#1|#2{\expandafter\def\csname #2#1\endcsname{\mathbb{#1}}}
\def\bBsymbols#1#2{\@for\@tmpz:=#2\do{\expandafter\bB@symb\@tmpz|{#1}}}
\def\bold@symb#1|#2{\expandafter\def\csname #2#1\endcsname{\mathbf{#1}}}
\def\boldsymbols#1#2{\@for\@tmpz:=#2\do{\expandafter\bold@symb\@tmpz|{#1}}}
\def\scr@symb#1|#2{\expandafter\def\csname #2#1\endcsname{\mathscr{#1}}}
\def\scrsymbols#1#2{\@for\@tmpz:=#2\do{\expandafter\scr@symb\@tmpz|{#1}}}
\def\frak@symb#1|#2{\expandafter\def\csname #2#1\endcsname{\mathfrak{#1}}}
\def\fraksymbols#1#2{\@for\@tmpz:=#2\do{\expandafter\frak@symb\@tmpz|{#1}}}

\def\dmth@p#1|{\expandafter\let\csname#1\endcsname\relax
  \expandafter\DeclareMathOperator\csname#1\endcsname{#1}}
\def\operators#1{\@for\@tmpz:=#1\do{\expandafter\dmth@p\@tmpz|}}
\makeatother
\calsymbols{c}{A,B,C,D,E,F,G,H,I,J,K,L,M,N,O,P,Q,R,S,T,U,V,W,X,Y,Z}
\bBsymbols{b}{A,B,C,D,E,F,G,H,I,J,K,L,M,N,O,P,Q,R,S,T,U,V,W,X,Y,Z}
\boldsymbols{bf}{r,q,A,B,C,D,E,F,G,H,I,J,K,L,M,N,O,P,Q,R,S,T,U,V,W,X,Y,Z}
\scrsymbols{s}{A,B,C,D,E,F,G,H,I,J,K,L,M,N,O,P,Q,R,S,T,U,V,W,X,Y,Z}
\fraksymbols{fr}{a,b,c,d,e,f,g,h,i,j,k,l,m,n,o,p,q,r,s,t,u,v,w,x,y,z,F,M,O,R,X,Z}

\operators{sup,Gal,dR,pdR,Hom,Cond,CondAb,Solid,Sp,Der,Spa,Pro,inf,cycl,id,H,perf,b,Ext,Top,uSolid,Vect,ProVect,Fil,cond,op,Sh,Psh,sh,coh,gr,Hdg,IndFil,Fil,Ban,IndBan,Ind,Sym,Spec,Supp,Char,Mod,Gr,im}
\renewcommand{\>}{\rangle}

\newcommand{\defas}{\mathrel{\mathop{:}}=}
\newcommand{\cal}[1]{\mathcal{#1}}

\DeclareMathOperator{\et}{\text{\'{e}t}} %
\DeclareMathOperator{\proet}{\text{pro\'{e}t}} %

\newcommand{\isomap}{\xrightarrow{\cong}}
\newcommand{\Dcap}{\mathcal{\wideparen{D}}}
\newcommand{\OB}{\mathcal{O}\kern-1pt\mathbb{B}}
\newcommand{\SolB}{\mathcal{S}\kern-1pt\textit{ol}\kern1pt\mathbb{B}}
\newcommand{\Ovartheta}{\mathcal{O}\kern-1pt\vartheta}
\newcommand{\Otheta}{\mathcal{O}\kern-1pt\theta}
\DeclareMathOperator{\Sol}{\mathcal{S}\kern-1pt\textit{ol}}
\DeclareMathOperator{\Rec}{\mathcal{R}\kern-1pt\textit{ec}}
\DeclareMathOperator{\intHom}{\underline{Hom}}

\DeclareMathOperator{\shHom}{\underline{\mathcal{H}\kern-.5pt\textit{om}}}

\begin{document}
\raggedbottom

\title[A fully faithful $p$-adic Riemann--Hilbert functor]{A fully faithful $p$-adic Riemann--Hilbert functor for filtered coherent $\cD$-modules}

\author{Finn Wiersig}
\address{National University of Singapore}
\email{fwiersig@nus.edu.sg}
\date{}
\subjclass[2020]{Primary 12H25, 14G22; Secondary 14F10, 32C38}
\keywords{$p$-adic Riemann--Hilbert correspondence, rigid-analytic geometry,
filtered $\cD$-modules, period sheaves}

\begin{abstract}
  We prove that the solution functor from bounded good filtered
  complexes of coherent $\mathcal D$-modules on a smooth rigid-analytic variety to
  modules over the positive almost de Rham period sheaf is fully faithful.
  On filtered vector bundles with flat
  connection satisfying Griffiths transversality, the construction recovers, up to duality and extension of
  scalars, Scholze's horizontal sections functor.  
  As an application, we obtain a notion of de Rham Zariski-constructible sheaves,
  generalising the classical notion of de Rham local systems.
\end{abstract}

\maketitle

\tableofcontents

\section{Introduction}

\subsection{Infinite-order operators}

The Riemann--Hilbert correspondence originates in Hilbert's twenty-first
problem~\cite{Equationsdifferentiellesapointssinguliersreguliers,KashiwaraKawaiholonomicIII,Kashiwaraconstructibilityholonomic,KashiwaraRHforholonomicsystems,Mebkhoutuneequivalence}
and has become one of the central bridges between analysis, algebraic geometry,
and representation theory. Its applications range from Saito's theory of mixed
Hodge modules~\cite{MR1000123,MR1047415} to the proofs of the
Kazhdan--Lusztig conjecture~\cite{MR632980,MR610137}. In its classical form,
due independently to
Kashiwara~\cite{Kashiwaraconstructibilityholonomic,KashiwaraRHforholonomicsystems}
and Mebkhout~\cite{Mebkhoutuneequivalence}, the Riemann--Hilbert correspondence
identifies the opposite of the bounded derived category of regular holonomic
modules over the sheaf $\cD$ of finite-order differential operators on a smooth
complex algebraic variety with the bounded derived category of algebraically constructible sheaves of
complex vector spaces on the analytification of the algebraic variety.

There is also a purely analytic full-faithfulness theorem for modules over
infinite-order differential operators. Let $Y$ be a complex manifold, and let
$\cD^{\infty}$ denote the sheaf of infinite-order differential operators on
$Y$. Prosmans--Schneiders~\cite{PS00} proved that the solution functor
\begin{equation*}
  \Sol_{\cD^{\infty}}\colon
  D_{\perf}^{b}\left(\cD^{\infty}\right)^{\op}
  \hookrightarrow
  D\left(\bC_Y\right),
  \qquad
  \cal M^{\bullet}
  \mapsto
  R\shHom_{\cD^{\infty}}
  \left(
    \cal M^{\bullet},
    \cal O_Y
  \right)
\end{equation*}
is a fully faithful embedding of triangulated categories. Here
$D_{\perf}^{b}\left(\cD^{\infty}\right)$ denotes the category of bounded
perfect complexes of $\cD^{\infty}$-modules, and $\bC_Y$ is the constant
sheaf with value $\bC$.

Fix a prime number $p$. In~\cite{Wiersig2024reconstruction}, we proved a
$p$-adic analogue of the theorem of Prosmans--Schneiders. Let $k$ be a
complete discretely valued field of mixed characteristic $(0,p)$ with perfect
residue field, and let $X$ be a smooth rigid-analytic $k$-variety. In the
$p$-adic setting, sheaves of infinite-order differential operators arise
naturally from the theory of arithmetic $\cD$-modules initiated by
Berthelot~\cite{BerthelotDmodulesArithmeticI} and further developed by Huyghe
and collaborators~\cite{HuygheDdaggerAffiniteProjectif,NootHuygheBB,HuygheSchmidtStrauchArithmeticStructures}.
In the rigid-analytic setting used here, we work with the sheaf $\Dcap$ of
completed differential operators constructed by
Ardakov--Wadsley~\cite{AW19,MR3846550}.

The natural setting for the $p$-adic solution theory is the pro-\'etale site
$X_{\proet}$, with its projection $\nu\colon X_{\proet}\to X_{\et}$. Indeed, period
sheaves and perfectoid covers are essential for constructing a sufficiently
rich theory of horizontal sections for $p$-adic differential equations.
In~\cite{Wiersig2024reconstruction}, the coefficient sheaf is the
overconvergent almost de Rham period sheaf $\bB_{\pdR}^{\dagger}$ on
$X_{\proet}$. The bridge between $\Dcap$-modules and
$\bB_{\pdR}^{\dagger}$-modules is the
$\nu^{-1}\Dcap$-$\bB_{\pdR}^{\dagger}$-bimodule
$\OB_{\pdR}^{\dagger}$. The resulting solution functor
\begin{equation*}
  \Sol_{\Dcap}^{\dagger}\colon
  D_{\perf}^{b}\left(\Dcap\right)^{\op}
  \hookrightarrow
  D\left(\bB_{\pdR}^{\dagger}\right),
  \qquad
  \cal M^{\bullet}
  \mapsto
  R\shHom_{\nu^{-1}\Dcap}
  \left(
    \nu^{-1}\cal M^{\bullet},
    \OB_{\pdR}^{\dagger}
  \right)
\end{equation*}
is fully faithful by~\cite[Theorem A]{Wiersig2024reconstruction}; that paper
also treats the larger class of $\cal C$-complexes.

\subsection{Main result}

The purpose of the present paper is to show that, in the $p$-adic setting, one
has an analytic full-faithfulness theorem for modules over \emph{finite}-order differential
operators.

This stands in striking contrast to the theorem of Prosmans--Schneiders, which
is formulated for the sheaf $\cD^{\infty}$ of infinite-order differential
operators rather than for the sheaf $\cD$ of finite-order differential
operators. The classical Riemann--Hilbert correspondence of Kashiwara and
Mebkhout does concern finite-order $\cD$-modules, but it is a different kind
of statement: it imposes the much stronger finiteness condition of regular
holonomicity and takes values in constructible sheaves. The theorem proved here
is instead a full-faithfulness theorem for bounded coherent complexes of
rigid-analytic $\cD$-modules, with values in a category of
period sheaves.

Let $\cD$ denote the sheaf of finite-order
differential operators on $X$, equipped with its order filtration.
The category of sheaves of filtered $\cD$-modules is \emph{quasi-abelian}
in the sense of Schneiders~\cite{Sch99}. This allows us to consider its
derived category, which we denote by $D_{F}\left(\mathcal D\right)$.

The coefficient sheaf in our solution
functor is the positive almost de Rham period sheaf $\bB_{\pdR}^{+}$ on $X_{\proet}$. At
the level of absolute period rings, it is obtained by adjoining a formal
logarithm of Fontaine's period $t$: $B_{\pdR}^{+}:=B_{\dR}^{+}[\log t]$.
We will also use the almost de Rham period sheaf $\bB_{\pdR}$,
which we obtain from $\bB_{\pdR}^{+}$ by inverting $t$.
Our reason for working with $\bB_{\pdR}$ rather than $\bB_{\dR}$
is that over $\bB_{\pdR}$, certain extensions vanish.

The corresponding period structure sheaf is
$\OB_{\pdR}
  :=
  \bB_{\pdR}
  \widehat\otimes_{\bB_{\dR}}
  \OB_{\dR}$.
Here, $\OB_{\dR}$ denotes the de Rham period structure sheaf
on $X_{\proet}$. $\OB_{\pdR}$ carries the induced Hodge filtration.

\begin{introthm}[Corollary~\ref{cor:fullyfaithful-unfiltered-recII}]
\label{introthm:main}
  The filtered solution functor
  \begin{equation*}
    \Sol\colon
    D_{F,\coh}^{b}\left(\cD\right)^{\op}
    \hookrightarrow
    D\left(\bB_{\pdR}^{+}\right),
    \qquad
    \cal N^{\bullet}
    \mapsto
    F_0R\shHom_{\nu^{-1}\cD}^{F}
    \left(
      \nu^{-1}\cal N^{\bullet},
      \OB_{\pdR}
    \right)
  \end{equation*}
  is a fully faithful embedding of triangulated categories
  on the derived category of filtered $\cD$-modules whose cohomology sheaves
  carry good filtrations.
\end{introthm}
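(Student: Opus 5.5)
The strategy is to reduce to the analogous statement for complexes of $\cD$-modules and then argue locally. The reduction to \emph{strict} objects goes through the Rees construction. Via Schneiders' theory of quasi-abelian categories, $D_F(\cD)$ is equivalent to the derived category of graded modules over the Rees ring $R(\cD)=\bigoplus_i F_i\cD\, z^i$. Under this equivalence, complexes whose cohomology carries good filtrations correspond to bounded complexes with coherent graded $R(\cD)$-module cohomology. In the same way, the filtered period sheaf $\OB_{\pdR}$ becomes a graded $\nu^{-1}R(\cD)$-module $R(\OB_{\pdR})$. Taking $F_0$ of the filtered internal Hom then becomes taking the degree-zero part of a graded $R\shHom$, so that
\begin{equation*}
  \Sol(\cal N^\bullet)\simeq R\shHom_{\nu^{-1}R(\cD)}^{\mathrm{gr}}\left(\nu^{-1}R(\cal N^\bullet),R(\OB_{\pdR})\right)_0 .
\end{equation*}
The first step is to construct the natural comparison map
\begin{equation*}
  R\Hom^F_{\cD}(\cal N^\bullet,\cal M^\bullet)\longrightarrow R\Hom_{\bB^+_{\pdR}}\left(\Sol(\cal M^\bullet),\Sol(\cal N^\bullet)\right).
\end{equation*}
Because both sides are sheaf-valued, the question of whether this map is a quasi-isomorphism is local on $X$. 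We may therefore assume $X$ is affinoid with étale coordinates.

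Next comes a dévissage. Locally, every bounded complex with coherent cohomology and good filtrations is built by finitely many cones and shifts from filtered free modules $\cD(n)$, i.e.\ $\cD$ with its filtration shifted by $n$; this uses strictness and resolution by filtered free modules of finite rank. Both sides are triangulated in each variable, so by the five lemma it suffices to treat $\cal N=\cD(n)$ and $\cal M=\cD(m)$. In this case
\begin{equation*}
  \Sol(\cD(n))=F_{n}\OB_{\pdR}\cap(\text{appropriate }F_0),
\end{equation*}
namely a filtration step $\mathrm{Fil}^{-n}\OB_{\pdR}$, suitably read as a $\bB_{\pdR}^+$-module. The left-hand side is $F_{m-n}\cD(X)$, concentrated in degree zero. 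So the whole claim reduces to showing that $R\Hom_{\bB^+_{\pdR}}(\mathrm{Fil}^{-m}\OB_{\pdR},\mathrm{Fil}^{-n}\OB_{\pdR})$ has no higher cohomology and that its $H^0$ equals $F_{m-n}\cD(X)$, with the map given by the action of differential operators.

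The computation of this $R\Hom$ is the main obstacle. My plan is to use the local description $\OB_{\dR}^+\cong\bB_{\dR}^+[[X_1,\dots,X_d]]$, where $X_i=T_i-[T_i^\flat]$, along with the fact that $\mathrm{Fil}^r$ is generated by the monomials $t^aX^\beta$ with $a+|\beta|\ge r$. After passing to graded pieces for the $t$-adic filtration, the Hom should become $\cO$-linear maps between completed symmetric algebras in the $X_i$, which are controlled by the order filtration of $\cD$, a Taylor-expansion duality. This is exactly the point where finite order enters: the filtration bound forces only finitely many derivatives.

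I would compute the Hom by a limit argument over graded pieces, applying the pro-étale cohomology computations $R\nu_*\widehat{\cO}$ and the vanishing of the relevant Ext groups over $\bB_{\pdR}$ coming from $\log t$. These should kill the higher Ext terms that would otherwise arise from $\bB_{\dR}$-extensions given by the class of $t$. If that direct route fails, I would fall back on a comparison with the overconvergent case, \cite[Theorem A]{Wiersig2024reconstruction}, via $\cD\to\Dcap$: by taking $F_0$ and gradeds, $\Dcap$-linearity is reduced to $\cD$-linearity for each bounded filtration step.
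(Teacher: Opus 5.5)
Your reductions match the paper's, only packaged differently. Local perfectness of good filtered complexes reduces everything to $\cD$ and its filtration shifts. For this you need the finite global dimension of $\sR\cD(X)$ from Lemma~\ref{lem:D-finite-global-dimension} to get \emph{bounded} resolutions, and you need closure under direct summands. Working directly in $D(\bB^+_{\pdR})$ is harmless: $F_0\colon D_F(\bB_{\pdR})\to D(\bB^+_{\pdR})$ is an equivalence (Lemma~\ref{lem:forget-fil-recover-fil}), so your $R\Hom_{\bB^+_{\pdR}}$ between filtration steps is a filtration step of $R\Gamma$ of $R\shHom^F_{\bB_{\pdR}}(\OB_{\pdR},\OB_{\pdR})$.

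The gap is that the step you call the main obstacle is the heart of the theorem (Theorem~\ref{thm:rho-iso}), and your sketch puts the source of finiteness in the wrong place. On the perfectoid toric cover the filtration bound does \emph{not} force finitely many derivatives. Since $Q_i=t\delta_i$ has filtration degree $0$, any family $b_\alpha\in F_0\OB_{\pdR}(U)$ gives a degree-zero endomorphism $\sum_\alpha b_\alpha Q^{[\alpha]}$ of infinite order. Relatedly, $\Sol(\cD)=F_0\OB_{\pdR}$ is, on the cover, $F_0$ of $\bB_{\pdR}\widehat{\otimes}_{k_0}k_0[T]$ with $T_i=X_i/t$; it is not built from $\OB^+_{\dR}\cong\bB^+_{\dR}[[X]]$. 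Its graded pieces are polynomial rings over $\widehat{\cO}$ in the $T_i$ and $\log t$. Linear maps between them on the cover are arbitrary infinite-order Taylor series, not controlled by the order filtration of $\cD$.

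What is missing is the mechanism that actually produces $\cD$:
\begin{enumerate}
\item Rewrite $Q^{[\alpha]}=t^{|\alpha|}\delta^{[\alpha]}$. A degree-$n$ endomorphism then becomes $\sum_\alpha c_\alpha\delta^{[\alpha]}$ with $c_\alpha\in F_{n-|\alpha|}\OB_{\pdR}(U)$. The basis $\delta^{[\alpha]}$ is defined over $X$, so the face maps act only on coefficients.
\item The \v{C}ech complex of the toric cover then splits coefficientwise into \v{C}ech complexes of $F_{n-|\alpha|}\OB_{\pdR}$. It computes $R\Gamma(X_{\proet},-)$ only after you show the endomorphism complex is acyclic on affinoid perfectoids. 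That step uses Lemma~\ref{lem:completion-ghostadjunction-sheaves} to replace the completed source by the free module $\bB_{\pdR}\otimes_{k_0}k_0[T]$.
\item Finiteness comes from descent: $R\Gamma(X_{\proet},F_j\OB_{\pdR})$ is $A[0]$ for $j\geq0$ and $0$ for $j<0$ (Proposition~\ref{prop:usolid-coho-2}).
\end{enumerate}

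This is also exactly where $\log t$ enters. One has $H^1(X_{\proet},\gr_0\OB_{\dR})\cong A\log\chi_{\cycl}$. Since $\sigma(\log t)=\log t+\log\chi_{\cycl}(\sigma)$, this class becomes a coboundary in $\gr_0\OB_{\pdR}$, and an induction on the degree in $\log t$ shows no new classes appear. With $\OB_{\dR}$ alone, every $\delta^{[\alpha]}$ with $|\alpha|\leq n$ would contribute a nonzero $\Ext^1$.

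Your fallback via $\Dcap$ cannot replace this computation. $\Dcap(X)$ is far larger than $\cD(X)$, the period sheaves differ, and the comparison of the two solution functors is open for general coherent complexes, as the paper notes.
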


\begin{remark}
A filtration $F_\bullet\cal N$ on a coherent $\cD$-module is
\emph{good} if it is exhaustive, locally bounded below, compatible with the
order filtration,
and its Rees module is coherent over the Rees algebra $\sR\cD$.
\end{remark}

\begin{remark}\label{rmk:introthmmain-mic}
  Let $(\cal E,\nabla,F_\bullet)$ be a filtered vector bundle with
  integrable connection satisfying Griffiths transversality in the sense
  of Scholze~\cite{Sch13pAdicHodge}. Viewed as a filtered $\cD$-module,
  $\cal E$ carries a good filtration. If $\cal E^\vee$ denotes the dual
  filtered connection, the filtered Spencer resolution gives a canonical
  identification
  \begin{equation*}
    \Sol(\cal E^\vee)
    \isomap
    F_0\left(
      \nu^{-1}\cal E
      \widehat\otimes_{\nu^{-1}\cO_X}
      \OB_{\pdR}
    \right)^{\nabla=0}.
  \end{equation*}
  The corresponding degree-zero $\bB_{\dR}^+$-lattice is
  \begin{equation*}
    F_0\left(
      \nu^{-1}\cal E
      \widehat\otimes_{\nu^{-1}\cO_X}
      \OB_{\dR}
    \right)^{\nabla=0}.
  \end{equation*}
  This agrees with Scholze's $\bB_{\dR}^+$-local system from
  \cite[Theorem~7.6]{Sch13pAdicHodge}. Thus, up to duality and extension
  of scalars, $\Sol$ recovers Scholze's construction on filtered vector
  bundles with integrable connection, while
  Theorem~\ref{introthm:main} extends full faithfulness to good filtered
  coherent $\cD$-complexes.
\end{remark}

\subsection{Application: de Rham Zariski-constructible sheaves}

Recall that an étale $\bZ_p$-local system $\bL$ on $X_{\et}$ is de Rham in the
sense of~\cite[Definition 8.3]{Sch13pAdicHodge} and~\cite{LiuZhuRH2017} if there exist a filtered
vector bundle with integrable connection
$(\cal E,\nabla,F_\bullet)$ on $X$, satisfying Griffiths
transversality, and a filtered isomorphism
\begin{equation*}
  F_0\left(
    \nu^{-1}\cal E
    \widehat\otimes_{\nu^{-1}\cO_X}
    \OB_{\dR}
  \right)^{\nabla=0}
  \otimes_{\bB_{\dR}^{+}}\bB_{\dR}
  \cong
  \widehat{\bL}\otimes_{\widehat{\bZ}_p}\bB_{\dR}.
\end{equation*}
The left-hand side identifies with
$\SolB_{\dR}^{+}(\cal E^\vee)\widehat{\otimes}_{\bB_{\dR}^{+}}^{L}\bB_{\dR}$, where
\begin{equation*}
  \SolB_{\dR}^{+}\colon
  D_{F,\coh}^{b}(\cD)^{\op}
  \to
  D(\bB_{\dR}^{+}),
  \qquad
  \cal N^\bullet
  \mapsto
  F_0R\shHom_{\nu^{-1}\cD}^{F}
  \left(
    \nu^{-1}\cal N^\bullet,
    \OB_{\dR}
  \right).
\end{equation*}
This suggests an extension of de Rhamness to the
Zariski-constructible complexes introduced by
Bhatt--Hansen~\cite{BhattHansenZC}. In
\S\ref{section:application--dR}, we call a Zariski-constructible complex
$\cal K^\bullet\in D_{\mathrm{zc}}^b(X_{\et},\bZ_p)$ \emph{de Rham} if there exist
$\cal N^\bullet\in D_{F,\coh}^{b}(\cD)$ and an isomorphism
\begin{equation*}
  \SolB_{\dR}^{+}(\cal N^\bullet)
  \widehat{\otimes}_{\bB_{\dR}^{+}}^L
  \bB_{\dR}
  \cong
  \cal K_{\proet}^\bullet
  \widehat{\otimes}_{\widehat{\bZ}_p}^L
  \bB_{\dR}
\end{equation*}
in the derived category $D_F(\bB_{\dR})$ of filtered $\bB_{\dR}$-modules;
$\cal K_{\proet}^\bullet$ denotes the pro-étale realisation of $\cal K^\bullet$.

For an étale $\bZ_p$-local system, the classical de Rham condition immediately
implies the new one. The converse is formulated in
Theorem~\ref{thm:comparison-Scholze-deRham}, and proved using
Theorem~\ref{introthm:main}.
Thus Theorem~\ref{thm:comparison-Scholze-deRham} shows that the two notions agree on
étale $\bZ_p$-local systems.

\subsection{Proof of Theorem~\ref{introthm:main}}

The proof follows the Morita-theoretic strategy of
Prosmans--Schneiders~\cite{PS00}. Standard adjunctions give the filtered reconstruction functor
\begin{equation*}
  \Rec\colon
  D(\bB_{\pdR}^{+})^{\op}
  \to
  D_{F}\left(\cD\right),
  \qquad
  \cal F^{\bullet}
  \mapsto
  R\nu_*
  R\shHom_{\bB_{\pdR}}^{F}
  \left(
    \bB_{\pdR}\otimes_{\bB_{\pdR}^{+}}^{L}\cal F^{\bullet},
    \OB_{\pdR}
  \right),
\end{equation*}
and the biduality morphism
$\rho_{\cal N^{\bullet}}\colon\cal N^{\bullet}\to
\Rec(\Sol(\cal N^{\bullet}))$;
$\bB_{\pdR}\otimes_{\bB_{\pdR}^{+}}\cal F^{\bullet}$
carries the filtration whose $n$th filtered piece is $F_n\bB_{\pdR}\otimes_{\bB_{\pdR}^{+}}\cal F^{\bullet}$.
All complexes of $\cD$-modules
which we consider are
locally perfect, so a standard argument reduces the theorem to
showing that
\begin{equation*}
  \rho_{\cD}\colon
  \cD
  \isomap
  R\nu_*
  R\shHom_{\bB_{\pdR}}^{F}
  \left(
    \OB_{\pdR},
    \OB_{\pdR}
  \right)
\end{equation*}
is an isomorphism (Theorem~\ref{thm:rho-iso}).
We prove this as follows.
Locally on a toric affinoid $X=\Sp A$, put
$\delta_i=z_i\partial_i$ and $Q_i=t\delta_i$. On the perfectoid toric cover,
the $Q_i$ are coordinate derivations, so every descent-compatible
endomorphism has a Taylor expansion
$\sum_{\alpha}b_\alpha Q^{[\alpha]}$. Arithmetic descent identifies
$b_\alpha$ with an element of $A t^{-|\alpha|}$. A uniform upper bound on the filtration degrees of the coefficients
forces $|\alpha|$ to be bounded, and hence the expansion
to be finite. Since $t^{-|\alpha|}Q^{[\alpha]}=\delta^{[\alpha]}$, PBW
identifies these endomorphisms with $\cD(X)$. More precisely, endomorphisms
sending $F_m$ into $F_{m+n}$ for every $m$ correspond exactly to
$\cD_{\leq n}(X)$. Thus the endomorphism theorem is a strict filtered
isomorphism, and applying the Rees construction gives the displayed
isomorphism. The same cohomology calculation gives the vanishing in higher
degrees, and the local identifications glue.

\subsection{Comparison with the solution functor for
\texorpdfstring{$\Dcap$}{completed differential operator}-modules}

In~\cite{Wiersig2024reconstruction}, we defined the codomain of the functor
$\Sol_{\Dcap}^{\dag}$ to be the derived category of sheaves of $\bB_{\pdR}^{\dag}$-modules,
where the sections carry additional topological data. We can enhance
and upgrade this
functor so that it sends a complex of filtered $\Dcap$-modules into
the derived category of sheaves of $\bB_{\pdR}^{\dag,+}$-modules,
together with topological data. Then, for
every complex $\cal M^{\bullet}$ of filtered $\cD$-modules there is a canonical morphism
\begin{equation*}
  \Xi_{\cal M^{\bullet}}\colon
    \left\vert\Sol_{\Dcap}^{\dagger}
    \left(\Dcap\widehat{\otimes}_{\cD}^{L}\cal M^{\bullet}\right)\right\vert
  \otimes_{\left\vert\bB_{\pdR}^{\dagger,+}\right\vert}^{L}\bB_{\pdR}^{+}
  \to
  \Sol\left(\cal M^{\bullet}\right)
\end{equation*}
in the derived category of sheaves of filtered $\bB_{\pdR}^{+}$-modules,
where $|\cdot|$ indicates that we forget the topologies, but retain the filtrations.

The $p$-adic Cauchy theorem proved in~\cite{WiersigCauchy} implies that
$\Xi_{\cal E}$ is an isomorphism for every vector bundle $\cal E$ with
integrable connection.
For general coherent complexes this remains open, already for holonomic
$\cD$-modules. By analogy with the complex theory
\cite[Theorem 6.4.1]{KashiwaraKawaiholonomicIII}, the cone of $\Xi$ should
measure the failure of overconvergence and may be related to an appropriate
notion of regularity.


\subsection{Acknowledgements}

I thank Tomoyuki Abe, Konstantin Ardakov, Andreas Bode, and David Hansen for helpful
conversations.

ChatGPT was used for language editing and to assist with checking
mathematical arguments and references. The author reviewed the
resulting suggestions and assumes full responsibility for the article.


\subsection{Conflict of interest}
The author declares that they have no conflict of interest.


\subsection{Ground fields and rings}
Let $k$ denote a complete discretely valued field of mixed characteristic
$(0,p)$ with perfect residue field $\kappa$.
Fix a uniformiser $\pi\in k$ and write $k^{\circ}\subset k$
for the ring of power-bounded elements. Set $k_{0}:=W(\kappa)[1/p]$.
Fix an algebraic closure $\overline{k}$ of $k$, write
$G_k:=\Gal(\overline{k}/k)$, and let $C$ be its completion.


\subsection{Conventions}

The natural numbers are $\bN=\{0,1,2,3,\dots\}$.
Modules are always left modules.

\section{Background}

\subsection{Filtrations}
\label{subsec:Fil}

A \emph{filtration} $F_\bullet R$ on a commutative ring $R$ is a family
$\{F_nR\colon n\in\bZ\}$ of additive subgroups of $R$ such that
$1\in F_0R$, $F_nR\subseteq F_mR$ whenever $n\leq m$, and
$F_nR\cdot F_mR\subseteq F_{n+m}R$ for all $n,m\in\bZ$. Thus, all
filtrations considered here are increasing. The filtration is called
\emph{separated} if $\bigcap_{n\in\bZ}F_nR=\{0\}$ and
\emph{exhaustive} if $\bigcup_{n\in\bZ}F_nR=R$. Unless stated otherwise,
all filtrations considered below are assumed to be exhaustive.

The filtration $F_\bullet R$ defines a topology on $R$ for which the
subgroups $F_nR$, with $n\in\bZ$, form a fundamental system of
neighbourhoods of $0$. With this topology, $R$ is a topological ring. The
filtration is called \emph{complete} if every Cauchy sequence converges to
a unique limit. Equivalently, the canonical morphism
$R\to\varprojlim_{n\to+\infty}R/F_{-n}R$ is an isomorphism of additive
groups. In particular, a complete filtration is separated.

Let $F_\bullet R$ be a filtered commutative ring and let $M$ be an
$R$-module. A \emph{filtration} $F_\bullet M$ on $M$ is a family
$\{F_nM\colon n\in\bZ\}$ of additive subgroups of $M$ such that
$F_nM\subseteq F_mM$ whenever $n\leq m$, and
$F_nR\cdot F_mM\subseteq F_{n+m}M$ for all $n,m\in\bZ$. As above, the
filtration is called \emph{separated} if
$\bigcap_{n\in\bZ}F_nM=\{0\}$ and \emph{exhaustive} if
$\bigcup_{n\in\bZ}F_nM=M$. It defines a topology on $M$ for which the
subgroups $F_nM$ form a fundamental system of neighbourhoods of $0$. The
filtration is called \emph{complete} if every Cauchy sequence in $M$
converges to a unique limit, or equivalently if the canonical morphism
$M\to\varprojlim_{n\to+\infty}M/F_{-n}M$ is an isomorphism.

A \emph{morphism of filtered $R$-modules $M\to N$}
is a morphism $f\colon M\to N$ of $R$-modules such that
$f\left(F_nM\right)\subseteq F_nN$ for all $n\in\bZ$.
We denote the sets of filtered $R$-module morphisms by $\Hom^{F}$.
The resulting category of filtered $R$-modules
\[
\Fil_R
\]
is a quasi-abelian category in the sense of
Schneiders~\cite{Sch99}, which provides a robust framework for homological
algebra and for sheaves with values in quasi-abelian categories.
Following the arguments in~\cite[\S3.1]{Sch99}, one checks
furthermore that $\Fil_R$ is elementary. In particular, the category
has enough projective objects and we may form its derived
category $D(\Fil_R)$ in the sense of~\cite{Sch99}.

The category $D(\Fil_R)$ carries the left $t$-structure
with heart $LH(\Fil_R)$. It is a Grothendieck abelian category, with enough projectives and
injectives. Schneiders~\cite[Proposition 3.1.28]{Sch99} computes that
$LH(\Fil_R)$ is canonically equivalent to the category of graded modules
over the Rees ring
\begin{equation*}
  \sR R
  :=
  \bigoplus_{n\in\mathbb Z}F_{n}R\hbar^n.
\end{equation*}
It follows from the abstract theory of quasi-abelian categories
that the Rees functor
\begin{equation*}
  I=\sR\colon\Fil_R\hookrightarrow \Gr_{\sR R},\quad  M\mapsto \sR M:=\bigoplus_nF_n M\hbar^n
\end{equation*}
induces an equivalence $D(\Fil_R)\simeq D(\Gr_{\sR R})$,
where $\Gr_{\sR R}$ denotes the category of graded $\sR R$-modules.
We pass between the two settings without comment when no confusion can arise.

The category $\Fil_R$ is closed symmetric monoidal with respect to the
usual filtered tensor product $\otimes_R$. We denote its
internal Hom by $\intHom_R^{F}$, so that
$F_{0}\intHom^{F}_R=\Hom^{F}_R$ and
$\otimes_R$ and $\intHom_R^{F}$ are related by the usual
tensor--Hom adjunction.

 A \emph{filtered $R$-algebra} $A$ is a monoid object in $\Fil_R$.
 A \emph{filtered $A$-module} $M$ is a module over the monoid object $A$.
 As above, we have the Rees ring $\sR A$ and the Rees functor $M\mapsto\sR M$.


Let $S$ be a site. A \emph{presheaf of filtered $R$-modules} on $S$ is
a functor $\cF\colon S^{\op}\to\Fil_R$. Such a presheaf is called a
\emph{sheaf of filtered $R$-modules} if, for every covering family
$\cU=\{U_i\to U\}_{i\in J}$, the sequence
$0\to\cF(U)\to\prod_{i\in J}\cF(U_i)\to
\prod_{i,j\in J}\cF(U_i\times_UU_j)$ is strictly exact in $\Fil_R$.
Here the final morphism is the difference of the two restriction maps
induced by the projections $U_i\times_UU_j\to U_i$ and
$U_i\times_UU_j\to U_j$.

The resulting category $\Sh(S,\Fil_R)$ is quasi-abelian. The inclusion
\begin{equation*}
  \Sh(S,\Fil_R)\hookrightarrow\Psh(S,\Fil_R)
\end{equation*}
has a left adjoint $\cF\mapsto\cF^{\sh}$, called sheafification.

The tensor product of two sheaves $\cF$ and $\cG$ is defined by
$\cF\otimes_R\cG:=
\bigl(U\mapsto\cF(U)\otimes_R\cG(U)\bigr)^{\sh}$. This equips
$\Sh(S,\Fil_R)$ with a symmetric monoidal structure. It is closed: for
every sheaf $\cG$, the functor
$\cF\mapsto\cF\otimes_R\cG$ admits a right adjoint, which
we denote by $\shHom_R^{F}(\cG,-)$.

The symmetric monoidal structure gives a notion of monoid objects in $\Sh(S,\Fil_R)$,
which we call \emph{sheaves of filtered $R$-algebras}. For any sheaf $\mathcal R$
of filtered $R$-algebras, we denote its category of (left) module objects by $\Fil_{\mathcal R}$.
We call objects in $\Fil_{\mathcal R}$ \emph{sheaves of filtered $\mathcal R$-modules}.

\begin{lem}\label{lem:completion-ghostadjunction-sheaves}
  Let $\cR$ be a sheaf of filtered
  commutative rings, and let $\cM,\cN$ be sheaves of
  filtered $\cR$-modules. Write $\widehat{\cM}$ for the
  sheafification of the sectionwise separated completion.
  Suppose that, for every $n\in\bZ$, the canonical map
  \[
    F_n\cN\isomap
    R\varprojlim_{m\geq1}
    \left(F_n\cN/F_{n-m}\cN\right)
  \]
  is an isomorphism. Then restriction induces an isomorphism
  \[
    R\shHom_{\cR}^{F}(\widehat{\cM},\cN)
    \isomap R\shHom_{\cR}^{F}(\cM,\cN).
  \]
\end{lem}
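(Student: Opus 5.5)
The approach is to reduce the statement, via the truncations of $\cN$, to the case of a target killed by a single step of the filtration, where completion is invisible. Set $\cN_m:=\cN/F_{\bullet-m}\cN$, the filtered module with $F_n\cN_m=F_n\cN/F_{n-m}\cN$. The hypothesis says exactly that the canonical map $\cN\to R\varprojlim_{m}\cN_m$ is an isomorphism in the derived category of filtered sheaves. One can check this on the Rees side, or levelwise in $n$, since limits and the $F_n$ commute and $F_n$ is exact on strict objects.

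Because $R\shHom^F_{\cR}(\cM,-)$ and $R\shHom^F_{\cR}(\widehat\cM,-)$ are right adjoints at the derived level, they commute with $R\varprojlim$. Both sides therefore become $R\varprojlim_m R\shHom^F_{\cR}(-,\cN_m)$, and it suffices to show that restriction
\[
R\shHom^{F}_{\cR}(\widehat\cM,\cN_m)\to R\shHom^F_{\cR}(\cM,\cN_m)
\]
is an isomorphism for each $m\ge1$, naturally in $m$.

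Next I would use dévissage on $m$. The short strictly exact sequences $0\to \gr\text{-pieces}\to \cN_{m+1}\to\cN_m\to0$ reduce the claim to $m=1$, where $F_{n-1}$ acts as zero in the target: the target is $\cN_1$ with $F_n\cN_1=\gr_n\cN$. The structural point is that any filtered map $\cM\to\cN_m$ kills $F_{n-m}\cM$ inside $F_n$. More precisely, write $\cM_m:=\cM/F_{\bullet-m}\cM$, and similarly for $\widehat\cM$. For a target that is ``$m$-truncated'' (that is, a module over $\cR_m:=\cR/F_{\bullet-m}\cR$), the restricted map $\widehat\cM\to\cM$ factors through truncations, and we have the canonical isomorphism $\cM_m\isomap\widehat\cM_m$. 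This holds because separated completion does not change the quotients $F_n/F_{n-m}$, and sheafification preserves this.

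Derivedly, I would express the truncation as $\cR_m\otimes^L_{\cR}-$ in the Rees picture, namely $\sR\cR/\hbar^m$. Extension–restriction adjunction then gives
\[
R\shHom_{\sR\cR}(\sR\cM,\sR\cN_m)=R\shHom_{\sR\cR/\hbar^m}(\sR\cM\otimes^L\sR\cR/\hbar^m,\sR\cN_m),
\]
and the same holds with $\widehat\cM$. So it suffices to see that $\sR\cM\otimes^L_{\sR\cR}\sR\cR/\hbar^m\to\sR\widehat\cM\otimes^L\sR\cR/\hbar^m$ is an isomorphism. Using the Koszul-type presentation $\sR\cR\xrightarrow{\hbar^m}\sR\cR$, this amounts to two facts: the cokernels of $\hbar^m$ agree (the statement $\cM_m\cong\widehat\cM_m$ above), and the $\hbar^m$-torsion vanishes on both sides. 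The latter holds because $\hbar$ is injective on any Rees module of a filtered module.

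I expect the main obstacle to be the first step, namely justifying that the hypothesis on $F_n\cN$ really yields $\cN\simeq R\varprojlim_m\cN_m$ in the quasi-abelian derived category, including that the transition maps are strict epimorphisms. A second point needing care is that the sheafified separated completion still has $F_n\widehat\cM/F_{n-m}\widehat\cM\cong F_n\cM/F_{n-m}\cM$. I would first try to verify both levelwise in $n$ on the Rees side, where everything is a statement about graded sheaves of abelian groups.
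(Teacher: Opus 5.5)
Your proof is correct, but it takes a different route from the paper's.

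\textbf{The paper's route.} The paper passes to presheaves. It uses that sectionwise separated completion is an exact, idempotent localisation on filtered presheaves (via Mittag--Leffler). It then shows that $R\iota\cN$ is already complete, by identifying $F_n$ of its sections with $R\Gamma(V,F_n\cN)$ and commuting $R\Gamma$ with $R\varprojlim$. Homs are compared through the sheafification--inclusion adjunction, and the internal Hom version follows by working on slices and with filtration shifts.

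\textbf{Your route.} You stay with sheaves and argue by derived $\hbar$-adic completeness on the Rees side:
the hypothesis says exactly that $\sR\cN\simeq R\varprojlim_m\sR\cN/\hbar^m\sR\cN$, and the derived internal Homs out of $\sR\cM$ and $\sR\widehat{\cM}$ commute with this limit. For an $\hbar^m$-torsion target, the map
$\sR\cM\otimes^L_{\sR\cR}\sR\cR/\hbar^m\to\sR\widehat{\cM}\otimes^L_{\sR\cR}\sR\cR/\hbar^m$
is an isomorphism. This holds because $\hbar$ is a nonzerodivisor on $\sR\cR$, $\sR\cM$ and $\sR\widehat{\cM}$, and neither completion nor sheafification changes $F_n/F_{n-m}$.

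\textbf{What each approach buys.} Your version needs no analysis of completion as a functor on presheaves and no filtrationwise description of $R\iota$. It gives the internal Hom statement directly from tensor--Hom adjunction. It also isolates the only property of $\cM\to\widehat{\cM}$ that matters, namely being an isomorphism modulo every $\hbar^m$. The paper's argument is more concrete, since it works directly with sections.

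\textbf{Minor remarks.}
\begin{itemize}
\item $\cN_m$ is not an object of $\Fil_{\cR}$, because the maps $F_n\cN_m\to F_{n+1}\cN_m$ are not injective. It should be read as the graded module $\sR\cN/\hbar^m\sR\cN$ in the left heart, which is how you in fact use it.
\item The d\'evissage to $m=1$ is superfluous, since your $\hbar^m$ argument treats each $m$ directly.
\item The obstacle you anticipate disappears on the Rees side. The degree-$n$ component functor on graded sheaves is exact and has a derived left adjoint, so it commutes with derived products and hence with $R\varprojlim$. No strictness of the transition maps is needed to translate the hypothesis into $\sR\cN\simeq R\varprojlim_m\sR\cN/\hbar^m\sR\cN$.
\end{itemize}
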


\begin{proof}
  Let $\iota$ denote the embedding of the category $\Fil_{\mathcal{R}}$ of sheaves of filtered
  $\mathcal R$-modules into the category $\Fil_{\mathcal{R}}^{\mathrm{pre}}$ of
  presheaves of filtered $\mathcal R$-modules. Denote its right derived functor by $R\iota$.
  
  Write $\Fil_{\mathcal{R}}^{\mathrm{pre}}\to\Fil_{\mathcal{R}}^{\mathrm{pre}}$
  $\cF\mapsto\widehat{\cF}^{\mathrm{pre}}$ for the sectionwise separated
  completion functor. It is exact, in the sense that it preserves strict exact sequences.
  Indeed, strict exact sequences remain exact after applying
  $F_n(-)/F_{n-m}(-)$, and the resulting inverse systems have
  surjective transition maps. Exactness therefore follows
  from the Mittag--Leffler argument, applied sectionwise.
  Therefore, $\cF\mapsto\widehat{\cF}^{\mathrm{pre}}$
  induces an exact endofunctor of $D(\Fil_{\mathcal{R}}^{\mathrm{pre}})$. Moreover, it is
  idempotent, and the canonical morphisms $\cF\to\widehat{\cF}^{\mathrm{pre}}$ exhibit
  $\widehat{(-)}^{\mathrm{pre}}$ as the localisation onto the full subcategory of presheaves
  whose sections are separated
  and complete. Consequently, the induced endofunctor
  of $D(\Fil_{\mathcal{R}}^{\mathrm{pre}})$ is a localisation functor.

  We claim that $R\iota\cN$ is fixed by this localisation.
  Derived inclusion $R\iota$ is computed by derived sections on each filtration piece.
  Choosing a complex
  $\cK^\bullet$ of filtered presheaves representing
  $R\iota\cN$, we have
  \[
    F_n\cK^\bullet(V)\cong R\Gamma(V,F_n\cN)
  \]
  for every object $V\in S$ and every $n\in\bZ$.
  Taking cones shows that
  \[
    F_n\cK^\bullet(V)/F_{n-m}\cK^\bullet(V)
    \cong
    R\Gamma\left(V,F_n\cN/F_{n-m}\cN\right).
  \]
  The transition maps of these quotient complexes are
  surjective in every degree, so their ordinary inverse
  limit computes their derived inverse limit. Consequently,
  \[
  \begin{aligned}
    F_n\left(\widehat{\cK^\bullet}^{\mathrm{pre}}\right)(V)
    &\cong R\varprojlim_m
      R\Gamma\left(V,F_n\cN/F_{n-m}\cN\right)\\
    &\cong R\Gamma\left(
      V,R\varprojlim_m F_n\cN/F_{n-m}\cN\right)\\
    &\cong R\Gamma(V,F_n\cN).
  \end{aligned}
  \]
  Here the last isomorphism is the hypothesis.
  These identifications show that the canonical map
  $R\iota\cN\to\widehat{R\iota\cN}^{\mathrm{pre}}$
  is an isomorphism.

  Since sheafification is exact and
  $\widehat{\cM}=(\widehat{\iota\cM}^{\mathrm{pre}})^{\sh}$,
  derived adjunction and the localisation property give
  \[
  \begin{aligned}
    R\Hom_{\Sh}^{F}(\widehat{\cM},\cN)
    &\cong R\Hom_{\Psh}^{F}
      \left(\widehat{\iota\cM}^{\mathrm{pre}},
      R\iota\cN\right)\\
    &\cong R\Hom_{\Psh}^{F}
      \left(\iota\cM,R\iota\cN\right)\\
    &\cong R\Hom_{\Sh}^{F}(\cM,\cN)
  \end{aligned}
  \]
  where the subscripts $\Sh$ and $\Psh$ indicate whether we are computing the
  Hom in the category of sheaves or presheaves, respectively.
  Applying the same argument on every slice of the site
  and to every filtration shift of $\mathcal{N}$ proves the
  assertion for derived internal Hom.
\end{proof}

\subsection{Finite-order differential operators on rigid-analytic spaces}

Let $X$ be a smooth rigid-analytic variety over $k$. If $Y=\Sp B$ is an
affinoid subdomain of $X$, we write
$\cT(Y):=\Der_k(B)$ for the $B$-module of continuous $k$-linear
derivations of $B$. Equipped with the commutator bracket and its natural
action on $B$, the module $\cT(Y)$ is a $(k,B)$-Lie algebra in the sense
of~\cite{MR154906}. We may therefore form its relative enveloping algebra
$U_B(\cT(Y))$.

We denote by $\cD$ the sheaf of finite-order $k$-linear differential
operators on $X$. On an affinoid subdomain $Y=\Sp B$, it is given by
$\cD(Y):=U_B(\cT(Y))$. The universal property of the relative
enveloping algebra provides a canonical homomorphism
$i_B\colon B\to\cD(Y)$ of $k$-algebras.

We equip both $k$ and every affinoid $k$-algebra $B$ with their
trivial increasing filtrations. Explicitly, we set $F_nk=k$ and $F_nB=B$
for $n\geq0$, and $F_nk=F_nB=0$ for $n<0$.

The algebra $\cD(Y)$ carries its increasing order filtration
$\cD_{\leq n}(Y)$, where $\cD_{\leq n}(Y)$ is the left $B$-submodule
generated by products of at most $n$ elements of $\cT(Y)$. We set
$\cD_{\leq -1}(Y):=0$. Thus
$\cD(Y)=\bigcup_{n\geq0}\cD_{\leq n}(Y)$ and
$\cD_{\leq r}(Y)\cD_{\leq s}(Y)
\subseteq\cD_{\leq r+s}(Y)$ for all $r,s\geq0$.

\begin{lem}\label{lem:D-local-description}
  Suppose that $X$ is equipped with an étale morphism
  \begin{equation*}
    X\to\bT^d=\Sp k\langle z_1^{\pm1},\dots,z_d^{\pm1}\rangle.
  \end{equation*}
  For every
  affinoid subdomain $Y=\Sp B$ of $X$, let
  $\partial_i\in\cT(Y)$ denote the unique lift of the coordinate vector
  field $d/dz_i$ on $\bT^d$. For
  $\alpha=(\alpha_1,\dots,\alpha_d)\in\bN^d$, write
  $\partial^\alpha:=\partial_1^{\alpha_1}\cdots
  \partial_d^{\alpha_d}$ and
  $|\alpha|:=\alpha_1+\cdots+\alpha_d$.

  Every element $P\in\cD(Y)$ admits a unique expression
  $P=\sum_{\alpha\in\bN^d}f_\alpha\partial^\alpha$, where
  $f_\alpha\in B$ and all but finitely many of the $f_\alpha$ vanish.
  Moreover, for every $n\geq0$, there is a functorial isomorphism
  $\cD_{\leq n}(Y)\cong
  \bigoplus_{|\alpha|\leq n}B\partial^\alpha$ of left $B$-modules.
\end{lem}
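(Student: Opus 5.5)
The plan is to first show that $\cT(Y)$ is a free $B$-module with basis $\partial_1,\dots,\partial_d$, and then invoke the Poincaré--Birkhoff--Witt theorem of Rinehart~\cite{MR154906} for the relative enveloping algebra of a $(k,B)$-Lie algebra that is projective (here free) over $B$.

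For the first step, use that the étale morphism $Y\to X\to\bT^d$ identifies the continuous differentials: $\Omega_{B/k}$ (continuous Kähler differentials) is free on $dz_1,\dots,dz_d$, since étaleness gives $\Omega_{B/k}\cong B\otimes_{k\langle z^{\pm1}\rangle}\Omega_{k\langle z^{\pm1}\rangle/k}$. Dualising, $\cT(Y)=\Der_k(B)\cong\Hom_B(\Omega_{B/k},B)$ is free on the dual basis. Its elements are exactly the unique derivations $\partial_i$ with $\partial_i(z_j)=\delta_{ij}$, and these are the lifts of $d/dz_i$; uniqueness of the lift also follows from this identification. Moreover $[\partial_i,\partial_j]$ kills every $z_l$, hence vanishes, so the $\partial_i$ commute.

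For the second step, Rinehart's PBW theorem says that, for $L$ projective over $B$, the associated graded of the order filtration on $U_B(L)$ is $\Sym_B(L)$. With $L=\cT(Y)$ free on the commuting $\partial_i$, this gives $\gr_n\cD(Y)\cong\bigoplus_{|\alpha|=n}B\,\overline{\partial^\alpha}$. By induction on $n$, using the exact sequences $0\to\cD_{\leq n-1}(Y)\to\cD_{\leq n}(Y)\to\gr_n\cD(Y)\to0$ of left $B$-modules, one obtains that $\{\partial^\alpha:|\alpha|\le n\}$ is a $B$-basis of $\cD_{\leq n}(Y)$. Spanning is elementary: reorder products of vector fields $\sum_i g_i\partial_i$ using $\partial_i g=g\partial_i+\partial_i(g)$, which lowers the order. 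Taking the union over $n$ gives existence and uniqueness of $P=\sum f_\alpha\partial^\alpha$ with finitely many nonzero $f_\alpha$.

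Functoriality means compatibility with restriction to smaller affinoid subdomains $Y'\subseteq Y$. It holds because restriction $\cD(Y)\to\cD(Y')$ sends $\partial_i$ to the corresponding $\partial_i$ on $Y'$, again by uniqueness of lifts.

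The main obstacle is the first step: justifying that continuous derivations of $B$ are determined by their values on $z_1,\dots,z_d$ and form a free module. This is where étaleness and the completeness of affinoid algebras enter, via the identification of $\Omega_{B/k}$ above. If needed, one can argue more directly. Any continuous derivation vanishing on the $z_i$ vanishes on the dense image of $k[z^{\pm1}]$ and hence on $k\langle z^{\pm1}\rangle$. It then vanishes on $B$, because $B$ is étale over $k\langle z^{\pm1}\rangle$, so derivations extend uniquely along it. Existence of each $\partial_i$ follows from the same unique extension property applied to $d/dz_i$.
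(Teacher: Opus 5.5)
Your proposal is correct and follows the paper's proof. Étaleness of $Y\to\bT^d$ makes $\cT(Y)$ free on $\partial_1,\dots,\partial_d$: the paper phrases this as the base-change isomorphism $B\otimes_{k\langle z^{\pm1}\rangle}\cT(\bT^d)\cong\cT(Y)$, while you obtain it by dualising the continuous differentials. Then the Lie--Rinehart PBW identification $\gr\cD(Y)\cong\Sym_B\cT(Y)$ gives the monomial basis of each $\cD_{\leq n}(Y)$; your induction from $\gr$ to the filtered pieces and your check of compatibility with restriction just spell out steps the paper leaves implicit.
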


\begin{proof}
  Since $Y\to\bT^d$ is étale, the natural base-change morphism
  $B\otimes_{k\langle z_1^{\pm1},\dots,z_d^{\pm1}\rangle}
  \cT(\bT^d)\to\cT(Y)$ is an isomorphism. In particular,
  $\cT(Y)$ is a free $B$-module with basis
  $\partial_1,\dots,\partial_d$. The 
  PBW theorem for
  Lie--Rinehart algebras identifies
  $\gr\cD(Y)$ with $\Sym_B(\cT(Y))$. The ordered monomials
  $\partial^\alpha$ therefore form a basis of $\cD(Y)$ as a left
  $B$-module, and the monomials with $|\alpha|\leq n$ form a basis of
  $\cD_{\leq n}(Y)$.
\end{proof}

Restriction maps between affinoid subdomains preserve the order
filtration. Consequently, $Y\mapsto\cD(Y)$ defines a presheaf of
filtered $k$-algebras on the affinoid site of $X$. This presheaf is a
sheaf. Indeed, locally on a toric chart,
Lemma~\ref{lem:D-local-description} identifies each finite-order piece
$\cD_{\leq n}$ with a finite direct sum of copies of the structure
sheaf. Hence $\cD_{\leq n}$ is a sheaf of $k$-vector spaces for every
$n$. The assertion for $\cD$ then follows from the strict exactness of
filtered colimits in $\Fil_k$.

Similarly, let $\cO$ denote the structure sheaf endowed with the
trivial filtrations on affinoid sections, so that $\cO(Y)=B$ for
$Y=\Sp B$. The homomorphisms $i_B$ are compatible with restriction and
therefore define a canonical morphism
$\cO\to\cD$ of sheaves of filtered $k$-modules.

\subsection{The pro-étale site}

We identify $X$ with its associated adic space and denote Scholze's
pro-\'etale site by $X_{\proet}$; coverings are understood in the corrected
sense of~\cite{Sch13pAdicHodgeErratum}. There is a canonical morphism of
sites $\nu\colon X_{\proet}\to X_{\et}$.
The affinoid perfectoid objects form a basis of $X_{\proet}$
\cite[Definition 4.3 and Corollary 4.7]{Sch13pAdicHodge}. If such an object
$U$ has completed affinoid algebra $(R,R^+)$, we write
$\widehat U=\Spa(R,R^+)$.

Locally, choose an \'etale morphism $X\to\bT^d$. After base change to $C$,
adjoining all $p$-power roots of the toric coordinates gives an affinoid
perfectoid cover $\widetilde X_C\to X_{C}$. Its geometric Galois group is
$\bZ_p(1)^d$; this cover will be used throughout the local computations.

\subsection{Period sheaves}
\label{subsec:period-sheaves}

We recall the constructions of \cite[\S 6]{Sch13pAdicHodge}. All period
sheaves below are sheaves on $X_{\proet}$.

Fix a compatible system
$\epsilon=(1,\zeta_p,\zeta_{p^2},\dots)\in C^{\flat+}$, where
$\zeta_{p^n}$ is a primitive $p^n$-th root of unity. Let $(R,R^+)$ be a
perfectoid affinoid $C$-algebra and set
$R^{\flat+}:=\varprojlim_{x\mapsto x^p}R^+/p$. The \emph{relative
infinitesimal period ring} is
$\bA_{\inf}(R,R^+)\defas W(R^{\flat+})$, and we set
$\bB_{\inf}(R,R^+)\defas\bA_{\inf}(R,R^+)[1/p]$. Fontaine's map
$\theta\colon\bA_{\inf}(R,R^+)\to R^+$ is given by
$\sum_{n\geq0}[a_n]p^n\mapsto\sum_{n\geq0}a_n^\sharp p^n$.
After inverting $p$, it induces a map
$\theta\colon\bB_{\inf}(R,R^+)\to R$, which we denote by the same symbol.
The \emph{relative positive de Rham period ring} is the $\ker\theta$-adic
completion
\begin{equation*}
  \bB_{\dR}^+(R,R^+)
  :=\varprojlim_j\bB_{\inf}(R,R^+)/(\ker\theta)^j.
\end{equation*}
The element $t:=\log[\epsilon]$ converges in $\bB_{\dR}^+(R,R^+)$ and
generates $\ker\theta$. We define
\begin{equation*}
  \bB_{\dR}(R,R^+)
  :=\bB_{\dR}^+(R,R^+)[1/t].
\end{equation*}

In analogy with Fontaine's almost de Rham period ring
$B_{\pdR}=B_{\dR}[\log t]$, see~\cite{Fontaine2004Arithmetic}, we define the
\emph{relative almost de Rham period ring}
\begin{equation*}
  \bB_{\pdR}(R,R^+)
  :=\bB_{\dR}(R,R^+)[\log t],
\end{equation*}
where $\log t$ is a formal variable. The natural action of $G_k$ on $X_C$
induces semilinear actions on the period sheaves introduced below, and the
action on $\bB_{\pdR}$ is determined by
$\sigma(\log t)=\log t+\log\chi_{\cycl}(\sigma)$, where
$\chi_{\cycl}\colon G_k\to\bZ_p^\times$ is the cyclotomic character.
We will also consider the subring
\begin{equation*}
  \bB_{\pdR}^{+}(R,R^+):=\bB_{\dR}^{+}(R,R^+)[\log t].
\end{equation*}

We equip $k_0$ and $\bB_{\inf}(R,R^+)$ with the trivial filtrations.
The ring $\bB_{\dR}^+(R,R^+)$ carries the Hodge filtration
$F_{n}\bB_{\dR}^+(R,R^+):=(\ker\theta)^{-n}$ for $n\leq0$, and we set
$F_{n}\bB_{\dR}^+(R,R^+):=\bB_{\dR}^+(R,R^+)$ for $n\geq0$. Since
$t$ generates $\ker\theta$, this is the $t$-adic filtration. We extend it to
$\bB_{\dR}(R,R^+)$ by setting
$F_{n}\bB_{\dR}(R,R^+):=t^{-n}\bB_{\dR}^+(R,R^+)$ for $n\in\bZ$, and to
$\bB_{\pdR}(R,R^+)$ by setting
$F_{n}\bB_{\pdR}(R,R^+):=t^{-n}\bB_{\pdR}^+(R,R^+)$. Thus, $\log t$
has filtration degree zero. We do not equip $\bB_{\pdR}^{+}(R,R^+)$ with a filtration,
and simply view it as the zeroth filtered piece of $\bB_{\pdR}(R,R^+)$.

We refer to \cite[Definition 6.1]{Sch13pAdicHodge} for the construction of
the sheaves $\bB_{\dR}^+$ and $\bB_{\dR}$ on $X_{\proet}$. If
$U\in X_{\proet}$ is affinoid perfectoid and
$\widehat{U}=\Spa(R,R^+)$, then
$\bB_{\dR}^+(U)=\bB_{\dR}^+(R,R^+)$ and
$\bB_{\dR}(U)=\bB_{\dR}(R,R^+)$. We define $\bB_{\pdR}$ in the category
of sheaves of filtered $k$-algebras by
$\bB_{\pdR}(U):=\bB_{\dR}(R,R^+)[\log t]$,
whenever $U$ is an affinoid perfectoid over $X_{C}$, and for general
affinoid perfectoids by descent.
Similarly, we have the sheaf of abstract rings $\bB_{\pdR}^+(U):=\bB_{\dR}^+(R,R^+)[\log t]$.

\begin{lem}\label{lem:pdR-flat-dR}
  If $U\in X_{\proet}$ is affinoid perfectoid, then $\bB_{\pdR}(U)$ is flat
  as a $\bB_{\dR}(U)$-module with respect to the filtered tensor product.
\end{lem}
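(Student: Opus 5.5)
The plan is to show directly that $\bB_{\pdR}(U)$ is a filtered free $\bB_{\dR}(U)$-module, and then deduce flatness for the filtered tensor product. First suppose $U$ lies over $X_C$. Write $B:=\bB_{\dR}(U)$ with its $t$-adic filtration $F_nB=t^{-n}\bB_{\dR}^+(U)$, and $P:=\bB_{\pdR}(U)=B[\log t]$. As an abstract module, $P=\bigoplus_{j\geq0}B\,(\log t)^j$. The filtration is defined by $F_nP=t^{-n}\bB_{\dR}^+(U)[\log t]$, and since $\log t$ has degree $0$ this gives
\begin{equation*}
  F_nP=\bigoplus_{j\geq0}F_nB\,(\log t)^j .
\end{equation*}
Hence $P\cong\bigoplus_{j\in\bN}B$ in $\Fil_B$, where each summand carries the unshifted filtration.

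The next step is to observe that the filtered tensor product commutes with direct sums, because $-\otimes_B-$ is a left adjoint in the closed monoidal category $\Fil_B$. It therefore suffices to check that $B\otimes_B-$ is the identity, which is clear. A direct sum of strictly exact sequences in $\Fil_k$ is again strictly exact, since kernels, cokernels and images are computed filtration piece by filtration piece and sums are exact there. So $P\otimes_B-$ preserves strict exact sequences, and this is flatness in the quasi-abelian sense. Passing to Rees modules makes the same point: $\sR P\cong\bigoplus_j\sR B$ is a free graded $\sR B$-module, so it is flat. I would present the argument via the Rees equivalence, to avoid subtleties about the meaning of flatness in $\Fil_B$.

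For a general affinoid perfectoid $U$ not lying over $X_C$, $\bB_{\pdR}(U)$ is defined by descent. Here the main point, and what I expect to be the only real obstacle, is to make sense of $\log t$ when the base is not over $C$. Its Galois action $\sigma(\log t)=\log t+\log\chi_{\cycl}(\sigma)$ is unipotent, so the $B$-submodules $\bigoplus_{j\le m}B(\log t)^j$ are Galois stable. The filtration by $\log t$-degree therefore descends. Its graded pieces descend to $\bB_{\dR}(U)$ itself, possibly twisted by a rank-one object that is trivial as a filtered module, because the coefficient $\log\chi_{\cycl}$ has degree $0$.

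One then shows that $\bB_{\pdR}(U)$ is an increasing union of finite successive extensions of copies of $\bB_{\dR}(U)$ with trivial filtration shifts, with strict inclusions. Each such extension splits on the base change to $X_C$, which is a pro-étale cover. The argument would run as follows:
\begin{enumerate}
  \item Check flatness of each finite stage after pulling back to the cover, where it is free by the first step.
  \item Use faithful flatness of the cover to descend flatness of each finite stage.
  \item Conclude, since filtered colimits are strictly exact in $\Fil_k$ and a filtered colimit of flat modules is flat.
\end{enumerate}
If the descent formalism proves awkward, I would instead verify strict exactness after tensoring directly on the level of sections over the cover and its self-products, using the sheaf condition.
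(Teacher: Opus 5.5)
For $U$ over $X_C$, your argument is exactly the paper's proof. The paper writes $\bB_{\pdR}(U)\cong\bigoplus_{m\geq0}\bB_{\dR}(U)(\log t)^m$ in $\Fil_{\bB_{\dR}(U)}$ and notes that the filtered tensor product commutes with coproducts, which preserve strict exact sequences. The paper does not treat other affinoid perfectoids separately, so your descent paragraph goes beyond it, and as sketched it has a gap.

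Write $P_m\subseteq\bB_{\pdR}$ for the subsheaf of polynomials of $\log t$-degree at most $m$. Your claim that $\bB_{\pdR}(U)$ is a union of strict extensions of copies of $\bB_{\dR}(U)$ is a statement about sections over $U$. It needs the leading-coefficient maps $F_nP_m(U)\to F_n\bB_{\dR}(U)$ to be surjective. This is not formal, since taking sections is only left exact; it amounts to the vanishing of $H^1(U,F_nP_{m-1})$.

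Step (2) is also unjustified. That $U'\to U$ is a pro-\'etale covering gives exactness of \v{C}ech complexes of sheaves. It does not give faithful flatness of $\bB_{\dR}(U)\to\bB_{\dR}(U')$ for the filtered tensor product; these are $\xi$-adically completed rings, so this is not automatic. Nor does it give the base-change identity $P_m(U')\cong\bB_{\dR}(U')\otimes_{\bB_{\dR}(U)}P_m(U)$, which is essentially what you are trying to prove. Your fallback runs into the same obstacle: testing an arbitrary filtered $\bB_{\dR}(U)$-module on the cover already requires that base change.

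The missing input is the acyclicity of $\bB_{\dR}^{+}$ on affinoid perfectoids \cite[Theorem~6.5]{Sch13pAdicHodge}.

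\begin{itemize}
\item If $\xi\in\bA_{\inf}(U)$ generates $\ker\theta$, then each $F_n\bB_{\dR}|_U=\xi^{-n}\bB_{\dR}^{+}|_U$ has no higher cohomology on $U$.
\item By induction on $m$, all the leading-coefficient maps above are then surjective on sections over $U$.
\item In particular, there is $\ell_U\in F_0P_1(U)$ with leading coefficient $1$. On the cover, $\ell_U=\log t+c$ with $c\in\bB_{\dR}^{+}$, which is a filtered change of variable. Hence $\bB_{\pdR}|_U=\bB_{\dR}|_U[\ell_U]$ as filtered sheaves.
\item Since $U$ is quasi-compact and quasi-separated, taking sections gives $\bB_{\pdR}(U)\cong\bigoplus_{m\geq0}\bB_{\dR}(U)\,\ell_U^m$ in $\Fil_{\bB_{\dR}(U)}$.
\end{itemize}

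Your first-step argument then applies verbatim. Equivalently, once the surjections are known to be strict, they split because $\bB_{\dR}(U)$ is projective in $\Fil_{\bB_{\dR}(U)}$ (as $\Hom^{F}(\bB_{\dR}(U),-)=F_0$). Either way, no descent of flatness is needed.
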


\begin{proof}
  In $\Fil_{\bB_{\dR}(U)}$ one has
  \begin{equation*}
    \bB_{\pdR}(U)
    \cong
    \bigoplus_{m\geq0}\bB_{\dR}(U)(\log t)^m.
  \end{equation*}
  The filtered tensor product commutes with coproducts, which preserve strict
  exact sequences. Hence tensoring with $\bB_{\pdR}(U)$ is exact.
\end{proof}


\subsection{Period structure sheaves}
\label{subsec:periodstructuresheaves}

We use the corrected definition of $\OB_{\dR}^+$ from
\cite{Sch13pAdicHodgeErratum}. Let
$U=\text{``}\varprojlim_{i\in I}\text{"}U_i\in X_{\proet}$ be affinoid
perfectoid, where $U_i=\Spa(R_i,R_i^+)$, and write
$\widehat{U}=\Spa(R,R^+)$. On the basis of affinoid perfectoid objects, first
consider the presheaf given by
\begin{equation*}
  U\mapsto
  \varinjlim_i\varprojlim_j
  \left(
    \left(R_i^+\widehat{\otimes}_{W(\kappa)}
    \bA_{\inf}(R,R^+)\right)[1/p]/(\ker\theta)^j
  \right).
\end{equation*}
Here, $\widehat{\otimes}$ denotes the $p$-adic completion of the tensor
product, and $\theta$ denotes the map
\begin{equation*}
  \left(R_i^+\widehat{\otimes}_{W(\kappa)}
  \bA_{\inf}(R,R^+)\right)[1/p]
  \to R,
  \quad
  f\widehat{\otimes}a\mapsto f\theta(a),
\end{equation*}
where $R_i^+\to R^+$ is the natural map. The sheaf $\OB_{\dR}^+$ is the
sheafification of this presheaf.

For $n\leq0$, we equip $\OB_{\dR}^+$ with the Hodge filtration
$F_n\OB_{\dR}^+:=(\ker\theta)^{-n}$, and we set
$F_n\OB_{\dR}^+:=\OB_{\dR}^+$ for $n\geq0$. We equip
$\OB_{\dR}^+(U)[1/t]$ with the filtration
\begin{equation}\label{eq:OBdR+t-1-HodgeFil}
  F_n\OB_{\dR}^+(U)[1/t]
  :=\sum_{m\in\bZ}t^{-m}F_{n-m}\OB_{\dR}^+(U).
\end{equation}
Following the convention of Diao--Lan--Liu--Zhu
\cite[Definition 2.2.10 and Remark 2.2.11]{MR4536903}, we define
$\OB_{\dR}(U)$ to be the separated completion of
$\OB_{\dR}^+(U)[1/t]$ with respect to this filtration. This construction is
compatible with restriction and defines a sheaf $\OB_{\dR}$ of
filtered $\bB_{\dR}$-algebras on $X_{\proet}$. We emphasise that
$\OB_{\dR}$ denotes the completed structural period sheaf here, rather than
Scholze's uncompleted sheaf $\OB_{\dR}^+[1/t]$.

Let $\cR$ be a sheaf of filtered rings. For any two sheaves $\cF$ and $\cG$ of filtered $\cR$-modules,
we write $\cF\widehat{\otimes}_{\cR}\cG$ for the sheafification
of the presheaf
\begin{equation*}
  U\mapsto\cF(U)\widehat{\otimes}_{\cR(U)}\cG(U).
\end{equation*}

\begin{prop}\label{prop:localdescription-of-OBdR+-sheaves-recII}
  Suppose that $X$ is affinoid and equipped with an \'etale morphism
  $X\to\bT^d=\Sp k\left\<z_1^{\pm},\dots,z_d^{\pm}\right\>$. Let
  $\widetilde{X}_C\to X_C$ be the associated standard toric pro-\'etale cover
  and write $T=(T_1,\dots,T_d)$. Equip $k_0[T]$ with the trivial filtration.
  Then there is an isomorphism
  \begin{equation}\label{eq:localdescription-of-OBdR+-sheaves-recII}
    \bB_{\dR}|_{\widetilde{X}_C}
    \widehat{\otimes}_{k_0}k_0[T]
    \isomap
    \OB_{\dR}|_{\widetilde{X}_C}
  \end{equation}
  of sheaves of filtered $\bB_{\dR}$-algebras on
  $X_{\proet}/\widetilde{X}_C$. Under this isomorphism,
  \begin{equation*}
    T_i
    \mapsto
    \frac{z_i\widehat{\otimes}1-
    1\widehat{\otimes}[z_i^\flat]}{t},
  \end{equation*}
  where $z_i^\flat=(z_i,z_i^{1/p},z_i^{1/p^2},\dots)$ on the standard toric
  cover.
\end{prop}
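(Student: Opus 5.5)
The plan is to deduce the isomorphism~\eqref{eq:localdescription-of-OBdR+-sheaves-recII} from Scholze's description of $\OB_{\dR}^+$ on the toric tower, and then pass to the completed $t$-localisation on both sides.

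\textbf{Step 1: the integral statement.} By \cite[Proposition~6.10]{Sch13pAdicHodge}, corrected as in \cite{Sch13pAdicHodgeErratum}, there is an isomorphism
\begin{equation*}
  \bB_{\dR}^+|_{\widetilde X_C}[[X_1,\dots,X_d]]\isomap\OB_{\dR}^+|_{\widetilde X_C},
  \qquad
  X_i\mapsto z_i\otimes1-1\otimes[z_i^\flat].
\end{equation*}
Under it, the Hodge filtration $F_{-n}=(\ker\theta)^n$ corresponds to the ideal generated by monomials $t^aX^\beta$ with $a+|\beta|\geq n$.

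To recall the proof, on an affinoid perfectoid $U$ over $\widetilde X_C$ one shows that the elements $X_i$ lie in $\ker\theta$ and form, together with $t$, a regular sequence generating it. One then constructs the inverse map. The key input is that $\bB_{\dR}^+(U)[[X]]$ receives a map from $R_i^+\widehat\otimes\bA_{\inf}$, obtained by lifting the étale map $k\langle z^{\pm1}\rangle\to R_i$ via Hensel's lemma in the complete ring $\bB_{\dR}^+(U)[[X]]$. One sends $z_i\mapsto[z_i^\flat]+X_i$ and uses formal étaleness to extend. Then one checks that the two maps are mutually inverse modulo each power of $\ker\theta$.

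\textbf{Step 2: rescaling and localisation.} Substitute $X_i=tT_i$. Then $\bB_{\dR}^+[[X]]$ identifies with the subring of $\bB_{\dR}^+[[T]]$ of series $\sum b_\beta T^\beta$ with $b_\beta\in t^{|\beta|}\bB_{\dR}^+$. Inverting $t$ and using the filtration~\eqref{eq:OBdR+t-1-HodgeFil}, the $n$th filtered piece of $\OB_{\dR}^+(U)[1/t]$ becomes the set of finite sums of $b_\beta T^\beta$ with $b_\beta\in t^{-n}\bB_{\dR}^+$. This holds because $T_i$ has filtration degree~$0$, since $X_i\in F_{-1}$. Hence the filtered ring $\OB_{\dR}^+[1/t]$ contains $\bB_{\dR}\otimes_{k_0}k_0[T]$, with the tensor filtration in which $T$ has degree zero, as a dense filtered subring. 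The two rings agree after passing to $F_n/F_{n-m}$ for all $n,m$. Indeed, modulo $t^{-n+m}\bB_{\dR}^+$-multiples only finitely many $T$-degrees survive in $F_n$, because $t^{|\beta|}T^\beta\in\bB_{\dR}^+[[X]]$.

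Taking separated completions on both sides, as in the definition of $\OB_{\dR}$ following \cite{MR4536903}, yields a sectionwise filtered isomorphism
\begin{equation*}
  \bB_{\dR}(U)\widehat\otimes_{k_0}k_0[T]\cong\OB_{\dR}(U)
\end{equation*}
on affinoid perfectoids $U$ over $\widetilde X_C$. This isomorphism is compatible with restriction and with $T_i\mapsto(z_i\otimes1-1\otimes[z_i^\flat])/t$. Sheafifying both sides then gives the claim, since affinoid perfectoids form a basis.

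\textbf{Main obstacle.} The delicate part is Step~2: verifying that the quotients $F_n/F_{n-m}$ really agree. This amounts to a precise computation of $t^{-n}$-graded pieces of $\OB_{\dR}^+[1/t]$ in terms of the $T$-variables. I would check it on the associated graded first. There $\gr\OB_{\dR}^+\cong\widehat R[t,X]$ becomes, after inverting $t$, $\widehat R[t^{\pm1}][T]$. That matches $\gr(\bB_{\dR}\otimes k_0[T])$, and the claim then follows by induction on $m$ together with the five lemma. A secondary point is ensuring that sheafification commutes with the sectionwise completed tensor product. This is handled because both sides are computed on the basis of affinoid perfectoids, where the presheaves are already compatible with the sheaf conditions, by the vanishing of higher cohomology of $\bB_{\dR}^+/t^m$ on affinoid perfectoids.
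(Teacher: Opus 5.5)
Your proposal is correct and takes essentially the paper's route. Both arguments use a filtered comparison map that is an isomorphism on associated graded pieces (equivalently, on every $F_n/F_{n-m}$) and upgrade it to a strict isomorphism by completeness and separatedness; the only difference is that the paper cites Scholze's Corollary~6.15 for the graded statement, whereas you derive it from his Proposition~6.10 via $X_i=tT_i$. One slip: $F_n(\OB_{\dR}^+(U)[1/t])$ consists not of finite sums but of series $\sum_\beta c_\beta T^\beta$ with $c_\beta\in t^{-n}\bB_{\dR}^+(U)\cap t^{|\beta|-m}\bB_{\dR}^+(U)$ for some $m$ depending on the element, which is exactly why you need the density and $F_n/F_{n-m}$ comparison rather than an equality before completion.
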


\begin{proof}
  We have to show that the morphism
  \begin{equation*}
    \bB_{\dR}(U)\widehat{\otimes}_{k_0}k_0[T]
    \to
    \OB_{\dR}(U)
  \end{equation*}
  is an isomorphism for every affinoid perfectoid
  $U\in X_{\proet}/\widetilde{X}_C$. The morphism is strict and compatible
  with the Hodge filtrations, and it induces an isomorphism on associated
  graded objects by \cite[Corollary 6.15]{Sch13pAdicHodge}. Since both sides
  are separated and complete for these filtrations, the morphism is an
  isomorphism. The construction is compatible with restriction in $U$.
\end{proof}

\begin{defn}
  We define the sheaf of filtered $\bB_{\pdR}$-algebras
  \begin{equation*}
    \OB_{\pdR}
    :=
    \bB_{\pdR}\widehat{\otimes}_{\bB_{\dR}}\OB_{\dR}.
  \end{equation*}
\end{defn}

\begin{cor}\label{prop:localdescription-of-OBpdR-sheaves-recII}
  With the notation as in Proposition~\ref{prop:localdescription-of-OBdR+-sheaves-recII},
  there is an isomorphism
  \begin{equation*}
    \bB_{\pdR}|_{\widetilde{X}_C}
    \widehat{\otimes}_{k_0}
    k_0[T]
    \isomap
    \OB_{\pdR}|_{\widetilde{X}_C},
    \quad
    T_i
    \mapsto
    \frac{z_i\widehat{\otimes}1-
    1\widehat{\otimes}[z_i^\flat]}{t}
  \end{equation*}
  of sheaves of filtered $\bB_{\pdR}|_{\widetilde{X}_C}$-modules.
\end{cor}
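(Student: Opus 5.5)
The plan is to obtain the corollary by applying the functor $\bB_{\pdR}\widehat{\otimes}_{\bB_{\dR}}(-)$ to the isomorphism of Proposition~\ref{prop:localdescription-of-OBdR+-sheaves-recII}. That functor is the one defining $\OB_{\pdR}$, and it commutes with restriction to $X_{\proet}/\widetilde{X}_C$. This gives an isomorphism
\begin{equation*}
  \bB_{\pdR}|_{\widetilde{X}_C}\widehat{\otimes}_{\bB_{\dR}}\left(\bB_{\dR}|_{\widetilde{X}_C}\widehat{\otimes}_{k_0}k_0[T]\right)
  \isomap
  \OB_{\pdR}|_{\widetilde{X}_C}
\end{equation*}
of sheaves of filtered $\bB_{\pdR}|_{\widetilde{X}_C}$-modules, with $T_i$ still sent to $(z_i\widehat{\otimes}1-1\widehat{\otimes}[z_i^\flat])/t$.

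It remains to identify the left-hand side with $\bB_{\pdR}|_{\widetilde{X}_C}\widehat{\otimes}_{k_0}k_0[T]$. I would check this on the basis of affinoid perfectoid $U$ over $\widetilde{X}_C$ and then sheafify. Since $k_0[T]$ carries the trivial filtration, $\bB_{\dR}(U)\otimes_{k_0}k_0[T]$ is, as a filtered $\bB_{\dR}(U)$-module, the direct sum $\bigoplus_{\beta}\bB_{\dR}(U)T^\beta$ with $F_n$ given termwise. Its separated completion is the module of series $\sum_\beta b_\beta T^\beta$ with $b_\beta\in F_n\bB_{\dR}(U)$ for some fixed $n$ and $b_\beta\to0$ $t$-adically. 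Tensoring with $\bB_{\pdR}(U)=\bigoplus_{m\ge0}\bB_{\dR}(U)(\log t)^m$, as in the proof of Lemma~\ref{lem:pdR-flat-dR}, and completing again gives the same description with coefficients in $\bB_{\pdR}(U)$. Doing the same computation for the other side yields the same description, because the filtration $F_n\bB_{\pdR}=t^{-n}\bB_{\pdR}^+$ is the one induced termwise from the direct sum over powers of $\log t$.

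Concretely, both sides are the separated completion of $\bigoplus_{m,\beta}\bB_{\dR}(U)(\log t)^mT^\beta$ with its termwise filtration. The comparison comes from associativity of the uncompleted filtered tensor product together with the fact that completing in two stages agrees with completing once; both pieces are universal properties of separated completion. The result then sheafifies.

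The main obstacle is this last point: showing that the iterated completed tensor product equals the single one, i.e.\ that $\widehat{\otimes}$ is associative here. One has to control completion of a possibly infinite direct sum in the $\log t$-variable. I would handle it on each filtration piece $F_n$, where for every $m'$ the quotient by $F_{n-m'}$ is an honest direct sum of $\bB_{\dR}^+(U)/t^{m'}$-modules. Passing to the inverse limit then identifies both sides as the same projective limit of direct sums, compatibly with the maps. Strictness and compatibility with restriction are inherited from Proposition~\ref{prop:localdescription-of-OBdR+-sheaves-recII}.
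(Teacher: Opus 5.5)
Your proposal is correct and follows the paper's route: the paper states this corollary without a separate proof, as the base change of Proposition~\ref{prop:localdescription-of-OBdR+-sheaves-recII} along $\bB_{\dR}\to\bB_{\pdR}$ using the definition $\OB_{\pdR}=\bB_{\pdR}\widehat{\otimes}_{\bB_{\dR}}\OB_{\dR}$. Your check that the iterated completed tensor product agrees with the single one, by identifying each $F_n/F_{n-m'}$ with a direct sum over the basis $(\log t)^mT^\beta$, supplies exactly the detail the paper leaves implicit.
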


\subsection{Some bimodule structures}

By \cite[Corollary 6.13]{Sch13pAdicHodge}, the sheaf $\OB_{\dR}^+$ carries
a $\bB_{\dR}^+$-linear integrable connection
$\nabla\colon\OB_{\dR}^+\to
\OB_{\dR}^+\widehat{\otimes}_{\nu^{-1}\cO_X}
\nu^{-1}\Omega_X^1$. It satisfies Griffiths transversality,
$\nabla(F_n)\subseteq F_{n+1}\widehat{\otimes}_{\nu^{-1}\cO_X}
\nu^{-1}\Omega_X^1$, and therefore extends continuously to the completed
sheaf $\OB_{\dR}$. The universal property of the relative enveloping algebra
then gives $\OB_{\dR}$ the structure of a sheaf of
$\nu^{-1}\cD$-$\bB_{\dR}$-bimodules.

With the notation of Lemma~\ref{lem:D-local-description} and
Proposition~\ref{prop:localdescription-of-OBdR+-sheaves-recII}, set
$u_j:=z_j\widehat{\otimes}1-1\widehat{\otimes}[z_j^\flat]$, so that
$T_j=u_j/t$. The action is determined locally by
\begin{equation*}
  \partial_i(u_j)=\delta_{ij}
  \quad\text{and}\quad
  \partial_i(T_j)=t^{-1}\delta_{ij},
\end{equation*}
where $\delta_{ij}$ denotes the Kronecker delta.
Since the connection sends $F_m$ into $F_{m+1}$, every
differential operator of order at most $n$ sends $F_m$
into $F_{m+n}$.
Hence the action is bounded on every
finite-order piece $\cD_{\leq n}$.  This shows that $\OB_{\dR}$ is a
sheaf of filtered $\nu^{-1}\cD$-$\bB_{\dR}$-modules.

Finally, the action extends to
$\OB_{\pdR}=\bB_{\pdR}\widehat{\otimes}_{\bB_{\dR}}\OB_{\dR}$ by
extending the connection $\mathbb B_{\mathrm{pdR}}$-linearly and continuously. Consequently,
$\OB_{\pdR}$ is a sheaf of filtered $\nu^{-1}\cD$-$\bB_{\pdR}$-bimodules.

\section{Key technical computations}
\label{sec:OBpdR-is-tilting}

In this section, we establish the key technical inputs for the proof of
Theorem~\ref{introthm:main}.

\subsection{Cohomology computations}

The assertions in this subsection are local on $X$ for the analytic topology.
We may therefore assume that $X=\Sp A$ is affinoid and equipped with an étale
morphism
\begin{equation*}
  X\to\bT^d=\Sp k\left\<z_1^{\pm},\dots,z_d^{\pm}\right\>
\end{equation*}
which factors as a composite of rational embeddings and finite étale maps.
Let $\widetilde{X}\to X$ denote the associated standard toric pro-étale
cover, and write $\widetilde{X}_C:=\widetilde{X}\times_k C$.

\begin{prop}\label{prop:usolid-coho-2}
  The canonical morphism
  \begin{equation*}
    \cO(X)
    \to
    R\Gamma(X_{\proet},\OB_{\pdR})
  \end{equation*}
  is an isomorphism of filtered $k_0$-modules.
\end{prop}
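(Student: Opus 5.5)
The plan is to reduce to the classical computation of Scholze for $\OB_{\dR}$, via the Hochschild--Serre spectral sequence for the pro-étale Galois cover $\widetilde{X}_C\to X$. Its Galois group is
\[
\Gamma=\bZ_p(1)^d\rtimes G_k,
\]
with $\Gamma_{\mathrm{geom}}=\bZ_p(1)^d$ and quotient $G_k$. Since $\widetilde{X}_C$ and all its fibre powers are affinoid perfectoid, they have vanishing higher cohomology for $\bB_{\dR}^+$, filtration piece by filtration piece. This holds by Scholze's almost acyclicity and dévissage along $\gr^n\bB_{\dR}^+\cong\widehat{\cO}(n)$. Hence
\[
R\Gamma(X_{\proet},F_n\OB_{\pdR})\cong R\Gamma_{\mathrm{cont}}\bigl(\Gamma,F_n\OB_{\pdR}(\widetilde{X}_C)\bigr)
\]
for each $n$. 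The filtered statement amounts to checking this for every $n$ and compatibly in $n$. The target is $\cO(X)$ with its trivial filtration: $F_n=\cO(X)$ for $n\geq0$ and $0$ for $n<0$.

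\textbf{Step 1: geometric cohomology.} By Corollary~\ref{prop:localdescription-of-OBpdR-sheaves-recII},
\[
F_n\OB_{\pdR}(\widetilde{X}_C)\cong\widehat{\bigoplus}\, F_{n-?}\bB_{\pdR}(\widetilde X_C)\,T^\alpha ,
\]
where $T_i$ has filtration degree $-1$ in this convention, since $u_i\in\ker\theta$ and $T_i=u_i/t$. A generator $\gamma_j$ of $\bZ_p(1)^d$ acts by $T_i\mapsto T_i+\delta_{ij}$ (up to the normalisation of $\epsilon$), because $[z_j^\flat]\mapsto[\epsilon][z_j^\flat]$ and $(1-[\epsilon])/t$ is a unit. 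I then follow Scholze's proof of \cite[Proposition 6.16]{Sch13pAdicHodge} by dévissage to graded pieces. On $\gr$, one gets $\widehat{\cO}(\widetilde X_C)[\log t]$-coefficients tensored with a polynomial ring in the $T_i$ with unipotent action.
\begin{itemize}
\item The nonintegral-weight part of $\widehat{\cO}(\widetilde X_C)$ has vanishing continuous cohomology, by the standard $(\zeta-1)^{-1}$ bound.
\item On the integral part $\widehat{\cO}(X_C)[T]$, the Koszul complex for $\gamma_j-1$ acting as "$\partial/\partial T_j$" is a resolution of constants.
\end{itemize}
The result is $R\Gamma(\bZ_p(1)^d,\gr\OB_{\pdR})\cong\gr\bigl(\cO(X)\widehat\otimes_k \bB_{\pdR}(C)\bigr)$, concentrated in degree $0$. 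Completeness lets me pass from graded pieces to the filtered pieces $F_n$, using a Mittag-Leffler / $R\varprojlim$ argument as in Lemma~\ref{lem:completion-ghostadjunction-sheaves}. This gives
\[
R\Gamma(\bZ_p(1)^d,F_n\OB_{\pdR}(\widetilde X_C))\cong \cO(X)\widehat\otimes_k F_n B_{\pdR}.
\]
Here $F_nB_{\pdR}=t^{-n}B_{\dR}^+[\log t]$ for the absolute ring over $C$.

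\textbf{Step 2: arithmetic cohomology.} It remains to compute $R\Gamma_{\mathrm{cont}}(G_k,\cO(X)\widehat\otimes_k F_nB_{\pdR})$. Since $\cO(X)$ is an orthonormalisable Banach $k$-space with trivial action, this reduces to
\[
R\Gamma(G_k,F_nB_{\pdR})\cong F_n k\quad\text{in degree }0 .
\]
I prove this by dévissage on $\gr$ and on the $\log t$-degree. The input is Tate's computation: $H^i(G_k,C(m))=0$ for $m\neq0$, while $H^0(G_k,C)=k$ and $H^1(G_k,C)=k\cdot\log\chi_{\cycl}$.

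\textbf{Step 3: the main obstacle.} The hard point is the weight-zero graded piece, where $H^1(G_k,C)\neq0$ must be killed. On $\gr^0 B_{\pdR}=C[\log t]$, the group $G_k$ acts by $\sigma(\log t)^m=(\log t+\log\chi(\sigma))^m$. The polynomial ring $C[\log t]$ is an increasing union of successive extensions of $C$ by itself, and the extension classes are exactly $\log\chi\in H^1(G_k,C)$. I plan to argue inductively on $\log t$-degree, or via Fontaine's computation that $B_{\pdR}$ is "$C[\log t]$ with $H^1$ killed". The boundary map $H^0(C(\log t)^{\le m})\to H^1(C)$ sends $(\log t)^{m}$ to a nonzero multiple of the class $\log\chi$. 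This makes $H^1$ vanish in the colimit and leaves $H^0=k$.

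This is precisely why $\bB_{\pdR}$ is used instead of $\bB_{\dR}$: for $\OB_{\dR}$ one would get an extra $H^1$. Negative-index pieces then give $0$, since all nonzero-weight graded pieces are acyclic and $F_n$ for $n<0$ involves only positive Tate twists. This recovers the trivial filtration on $\cO(X)$, and strictness follows since everything is concentrated in degree $0$.
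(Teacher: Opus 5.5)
Your argument is correct in outline, and its decisive step is the same as the paper's. You filter by $\log t$-degree and use that the connecting map of the leading-coefficient sequence sends $a$ to $Na\log\chi_{\cycl}$, so the $H^1$ that survives for $\OB_{\dR}$ dies in the colimit. What differs is where this is done.

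The paper never opens up the Galois cohomology. It takes \cite[Proposition~6.16(ii)]{Sch13pAdicHodge} as a black box for $R\Gamma(X_{\proet},\gr_r\OB_{\dR})$: this is $A$ and $A\log\chi_{\cycl}$ in degrees $0,1$ for $r=0$, and zero otherwise. It then runs the $\log t$-induction directly on the subsheaves $\cP_{r,N}\subseteq\gr_r\OB_{\pdR}$ on $X_{\proet}$, passes to the colimit because $X$ is coherent, and finishes by completeness.

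You instead reprove Scholze's proposition via Hochschild--Serre for $\bZ_p(1)^d\rtimes G_k$ and run the induction on the absolute ring $C[\log t]$. The relative statement then becomes the base change along $k\to A$ of the absolute computation $R\Gamma(G_k,F_nB_{\pdR})\cong F_nk$, in the spirit of Colmez--Gilles--Nizio\l{}. This isolates the absolute phenomenon, but it costs three inputs that you only sketch and the paper avoids:
\begin{itemize}
\item Hochschild--Serre for continuous cohomology with non-discrete coefficients;
\item the identification of sections of $F_n\OB_{\pdR}$ on the \v{C}ech nerve with continuous cochains;
\item Tate's uniformly bounded torsion, to commute $G_k$-cohomology with $A\widehat{\otimes}_k-$. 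This should be applied to the Banach pieces $A\widehat{\otimes}_k C(j)$, i.e.\ after the $\gr$- and $\log t$-d\'evissage, not to $F_nB_{\pdR}$ directly.
\end{itemize}

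Three corrections.

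\textbf{The filtration degree of $T_i$.} $T_i$ has filtration degree $0$, not $-1$: $u_i\in F_{-1}$ and $t^{-1}\in F_1$, and Corollary~\ref{prop:localdescription-of-OBpdR-sheaves-recII} gives $k_0[T]$ the trivial filtration. Hence $\gr_r\OB_{\pdR}|_{\widetilde X_C}\cong\gr_r\bB_{\pdR}[T]$, which is what your Step~1 actually uses. Degree $-1$ is in fact impossible: $\gamma_j(T_j)-T_j=-[z_j^\flat]([\epsilon]-1)/t$ has $\theta$-image $-z_j\neq0$, so $\gamma_j$ would not preserve the filtration. The same formula shows that on $\gr_0$ the action is $T_j\mapsto T_j-z_j$ rather than $T_j\mapsto T_j+1$. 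This is harmless for your Koszul argument, or you can use the paper's $W_j$.

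\textbf{Completeness.} $\OB_{\pdR}$ is complete, so the $\bZ_p(1)^d$-invariants of $F_n\OB_{\pdR}(\widetilde X_C)$ are the filtration-completion of $A\widehat{\otimes}_k t^{-n}B_{\dR}^+[\log t]$, not that module itself. The completion contains sums such as $\sum_k t^k(\log t)^k$ of unbounded $\log t$-degree. Step~2 must be run on this completed module; your $\gr$-d\'evissage and $R\varprojlim$ argument handles it unchanged.

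\textbf{Step~3.} Phrase the induction as the paper does. Inductively, the leading-coefficient map $H^1(G_k,C[\log t]_{\le m-1})\to H^1(G_k,C)$ is an isomorphism. Hence the connecting map $H^0(G_k,C)\to H^1(G_k,C[\log t]_{\le m-1})$ is an isomorphism, and the transition maps on $H^1$ vanish. Together with $H^q(G_k,C)=0$ for $q\geq2$, this gives again $H^1(G_k,C[\log t]_{\le m})\cong k$ and no higher cohomology.
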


\begin{remark}
    The argument below is partly inspired by the Galois-cohomological arguments of
    Colmez--Gilles--Nizioł for $B_{\pdR}$~\cite[Théorème~1.4]{colmez_gilles_niziol_2025_cst_compact_support}.
\end{remark}

\begin{proof}
  Put $A=\cO(X)$. For $r\in\bZ$ and $N\geq0$, let
  $\cP_{r,N}\subseteq\gr_r\OB_{\pdR}$ be the subsheaf
  of polynomials of degree at most $N$ in $\ell=\log t$.
  This does not depend on the choice of $t$, because changes of $t$ result in translations of $\ell$.
  In particular, we have $\cP_{r,0}=\gr_r\OB_{\dR}$
  and, for $N\geq1$, the exact sequences
  \[
    0\to\cP_{r,N-1}\to\cP_{r,N}
    \stackrel{\beta_{r,N}}{\to}\gr_r\OB_{\dR}\to0
  \]
  where $\beta_{r,N}$ sends a polynomial $\sum_{i=0}^{N}b_i\ell^i$
  to its leading coefficient $b_N$.

  By \cite[Proposition~6.16(ii)]{Sch13pAdicHodge},
  the cohomology of $\gr_r\OB_{\dR}$ vanishes unless
  $r=0$ and $q\in\{0,1\}$, in which cases it is
  $A$ and $A\log\chi_{\cycl}$, respectively.
  
Set $\beta_{0,0}=\mathrm{id}$. For $N\geq1$, the
connecting homomorphism
\[
  \partial_N\colon A\longrightarrow
  H^1(X_{\proet},\cP_{0,N-1})
\]
sends $a$ to the class of the cocycle
\[
  \sigma\longmapsto
  a\bigl((\ell+\log\chi_{\cycl}(\sigma))^N-\ell^N\bigr).
\]
Its image under $H^1(\beta_{0,N-1})$ is
$Na\log\chi_{\cycl}$. Inductively,
$H^1(\beta_{0,N-1})$ is an isomorphism, so $\partial_N$
is an isomorphism. The long exact sequence then shows
that $H^1(\beta_{0,N})$ is an isomorphism as well.
  Consequently,
  $H^0(X_{\proet},\cP_{0,N})=A$,
  $H^1(X_{\proet},\cP_{0,N})\cong A$, and the maps
  \begin{equation*}
    H^1(X_{\proet},\cP_{0,N-1})\to H^1(X_{\proet},\cP_{0,N})
  \end{equation*}
  are zero. All higher cohomology vanishes.
  For $r\ne0$, all cohomology of $\cP_{r,N}$ vanishes.
  Since the affinoid object $X$ is coherent in $X_{\proet}$
  by \cite[Proposition~3.12(iii)]{Sch13pAdicHodge}, its cohomology commutes with filtered colimits.
  This implies
  \begin{equation}\label{eq:prop:usolid-coho-2}
    R\Gamma(X_{\proet},\gr_r\OB_{\pdR})
    =\varinjlim_NR\Gamma(X_{\proet},\cP_{r,N})
    \cong
    \begin{cases}
      A[0], & r=0,\\
      0, & r\ne0.
    \end{cases}
  \end{equation}

  On affinoid perfectoid objects over $\widetilde X_C$,
  each $\gr_r\OB_{\pdR}$ is a direct sum of copies of
  $\widehat{\cO}$ and hence has no higher cohomology by
  \cite[Lemma~4.10(v)]{Sch13pAdicHodge}.
  The same holds for the quotients
  $F_n\OB_{\pdR}/F_{-m}\OB_{\pdR}$, whose transition
  maps on sections are surjective.
  Completeness and \cite[Lemma~3.18]{Sch13pAdicHodge}
  therefore give
  \[
    R\Gamma(X_{\proet},F_n\OB_{\pdR})
    \cong
    R\varprojlim_{m\to\infty}
    R\Gamma\left(
      X_{\proet},
      F_n\OB_{\pdR}/F_{-m}\OB_{\pdR}
    \right).
  \]
  By~\eqref{eq:prop:usolid-coho-2} the complexes on the right are,
  for sufficiently large $m$, canonically $A[0]$ if
  $n\geq0$ and zero otherwise. Their transition maps
  are isomorphisms. Thus
  \[
    R\Gamma(X_{\proet},F_n\OB_{\pdR})
    \cong
    \begin{cases}
      A[0], & n\geq0,\\
      0, & n<0.
    \end{cases}
  \]
  These identifications are induced by the canonical
  map from $A$ with its trivial filtration, proving
  the assertion.
\end{proof}

\subsection{Computing an endomorphism sheaf}

\begin{thm}\label{thm:rho-iso}
  The $\nu^{-1}\cD$-$\bB_{\pdR}$-bimodule structure on $\OB_{\pdR}$
  induces an isomorphism
  \begin{equation*}
    \rho\colon
    \cD
    \isomap
    R\nu_*
    R\shHom_{\bB_{\pdR}}^{F}
    \left(
      \OB_{\pdR},
      \OB_{\pdR}
    \right)
  \end{equation*}
  in $D(\Sh(X,\Fil_k))$.
\end{thm}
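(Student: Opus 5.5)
The statement is local on $X$, and both sides are sheaves (complexes of sheaves) on $X$, so I will assume, as in \S\ref{sec:OBpdR-is-tilting}, that $X=\Sp A$ is affinoid with an étale map to $\bT^d$ factoring through rational embeddings and finite étale maps. It then suffices to compute $R\Gamma(Y_{\proet},R\shHom^{F}_{\bB_{\pdR}}(\OB_{\pdR},\OB_{\pdR}))$ for every affinoid subdomain $Y$ of this type. I will show that it is concentrated in degree $0$, where it equals $\cD(Y)$ with its order filtration, via $\rho$, strictly. Since the filtered derived category is identified with graded Rees modules, I will work filtration-degree by filtration-degree: $F_n$ of the internal Hom consists of maps sending $F_m$ into $F_{m+n}$. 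The goal is to identify $F_n$ with $\cD_{\leq n}(Y)$.

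\textbf{Step 1 (restrict to the toric cover).} Over $\widetilde{X}_C$, Corollary~\ref{prop:localdescription-of-OBpdR-sheaves-recII} gives $\OB_{\pdR}\cong\bB_{\pdR}\widehat{\otimes}_{k_0}k_0[T]$, the completed filtered polynomial algebra in which $T_i$ has degree $-1$ (since $T_i=u_i/t$ and $u_i\in F_{-1}$). A filtered $\bB_{\pdR}$-linear map out of the completion is determined by the images of the monomials $T^{\beta}$, by Lemma~\ref{lem:completion-ghostadjunction-sheaves}. The target satisfies its completeness hypothesis, as shown in the proof of Proposition~\ref{prop:usolid-coho-2}. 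Hence the derived Hom is the underived Hom; more precisely, $F_n$ of the internal Hom is $R\lim$ of products of copies of $F_{n-|\beta|}\OB_{\pdR}$, indexed by $\beta$. It is convenient to reorganise this via Taylor expansion. The derivations $Q_i=t\,z_i\partial_i$ act on $\OB_{\pdR}$ with $Q_i$ of filtration degree $0$, since $\partial_i$ raises degree by one and $t$ lowers it by one. Every $\bB_{\pdR}$-linear endomorphism $\phi$ then has a unique expansion $\phi=\sum_\alpha b_\alpha Q^{[\alpha]}$ with $b_\alpha\in\OB_{\pdR}$. The condition that $\phi$ lie in $F_n$ translates into $b_\alpha\in F_{n+\ast}$, with an index shift depending on $|\alpha|$. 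Concretely, the coefficients are $b_\alpha=\phi$ applied to the dual basis elements built from the $u_i$, and these basis elements lie in $F_{-|\alpha|}$. This gives $b_\alpha\in F_{n-|\alpha|}\OB_{\pdR}$.

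\textbf{Step 2 (descent).} Take pro-étale cohomology over $Y$, using the Čech or Galois description for the cover $\widetilde{Y}_C\to Y$ with group $\bZ_p(1)^d\rtimes G_k$, together with $R\lim$ over the truncations $F_n/F_{n-m}$ as in Proposition~\ref{prop:usolid-coho-2}. The endomorphism sheaf over the cover is a completed product $\prod_\alpha F_{n-|\alpha|}\OB_{\pdR}\cdot Q^{[\alpha]}$, with a twisted descent action on the $Q^{[\alpha]}$: they transform through the factor $t^{|\alpha|}$ and the change of $z_i\partial_i$. Rewriting $Q^{[\alpha]}=t^{|\alpha|}\delta^{[\alpha]}$, the $\delta^{[\alpha]}$ are descended, $\cO_X$-linear operators. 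The factor $t^{|\alpha|}$ moves into the coefficient, so the relevant piece becomes $t^{|\alpha|}F_{n-|\alpha|}\OB_{\pdR}=F_{n-2|\alpha|}$-type terms. I need to be careful with the indices here, but the outcome should be: the coefficient of $\delta^{[\alpha]}$ lies in $F_{n-|\alpha|}\OB_{\pdR}$, up to the shift, and it is a global section. Proposition~\ref{prop:usolid-coho-2} then says that $R\Gamma(Y_{\proet},F_j\OB_{\pdR})$ is $\cO(Y)[0]$ for $j\geq0$ and $0$ for $j<0$. The $R\lim$ and the product commute with $R\Gamma$, because each factor is $R\lim$-acyclic with surjective transitions. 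Therefore only the $\alpha$ with $|\alpha|\leq n$ survive, each contributing $\cO(Y)\delta^{[\alpha]}$ in degree $0$, and higher cohomology vanishes.

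\textbf{Step 3 (identification and gluing).} By Lemma~\ref{lem:D-local-description} (with $\delta_i=z_i\partial_i$ and the $z_i$ units, so the $\delta^{\alpha}$ also form a basis), $\bigoplus_{|\alpha|\leq n}\cO(Y)\delta^{[\alpha]}=\cD_{\leq n}(Y)$. The resulting map agrees with $\rho$, because the $Q$-expansion of the action of $\delta^{[\alpha]}$ is exactly $t^{-|\alpha|}Q^{[\alpha]}$. All identifications are compatible with restriction to smaller $Y$, so they glue, and the isomorphism is strict in each degree $n$; this yields the claim in $D(\Sh(X,\Fil_k))$.

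The main obstacle is Step 1--2: justifying that every filtered endomorphism over the cover admits a convergent Taylor expansion in the $Q^{[\alpha]}$ (an infinite product, not a sum), and tracking the filtration indices exactly. The vanishing of $R\Gamma(F_j)$ for $j<0$ is what truncates the infinite expansion to a finite one. If the direct Taylor argument is awkward, I would first pass to $\gr$: compute $\gr_n$ of the internal Hom as a graded Hom between polynomial algebras over $\gr\bB_{\pdR}$, apply \eqref{eq:prop:usolid-coho-2} term by term, and then conclude by completeness.
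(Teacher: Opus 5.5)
\textbf{Same strategy as the paper, but the filtration indices are wrong and the result is asserted rather than derived.}

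Your outline follows the paper's proof. You reduce to a toric affinoid and use Lemma~\ref{lem:completion-ghostadjunction-sheaves} to get acyclicity on the perfectoid cover. You then Taylor-expand in the $Q^{[\alpha]}$ and rewrite $Q^{[\alpha]}=t^{|\alpha|}\delta^{[\alpha]}$, so the basis descends and the \v{C}ech complex splits coefficientwise. Finally, Proposition~\ref{prop:usolid-coho-2} removes the terms with $|\alpha|>n$.

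The gap is the filtration bookkeeping in Steps 1--2, which is exactly what the strictness statement is about. You close the discrepancy by assertion (``the outcome should be\dots''). Taken literally, your indices put the coefficient of $\delta^{[\alpha]}$ in $t^{|\alpha|}F_{n-|\alpha|}\OB_{\pdR}=F_{n-2|\alpha|}\OB_{\pdR}$. Proposition~\ref{prop:usolid-coho-2} would then identify $F_n$ with $\bigoplus_{2|\alpha|\leq n}A\delta^{[\alpha]}$, which is not $\cD_{\leq n}$.

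The error has two sources.

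First, $T_i=u_i/t$ has filtration degree $0$, not $-1$. Indeed $u_i\in F_{-1}$ but $t^{-1}\in F_1$, which is why Corollary~\ref{prop:localdescription-of-OBpdR-sheaves-recII} puts the trivial filtration on $k_0[T]$. So on an affinoid perfectoid $U$ over the cover, $F_n$ of the internal Hom is $\prod_\beta F_n\OB_{\pdR}(U)$, with no shift.

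Second, the coordinates dual to $Q$ are not built from the $u_i$. Since $\partial_i(u_j)=\delta_{ij}$, monomials in $u$ are dual to $\partial^{[\alpha]}$. What you are computing are the $\partial^{[\alpha]}$-coefficients, which do lie in $F_{n-|\alpha|}$; you then apply the $t^{|\alpha|}$ shift a second time. The paper instead uses the logarithmic coordinates $W_i=t^{-1}\log\bigl((z_i\widehat{\otimes}1)/(1\widehat{\otimes}[z_i^\flat])\bigr)$. These satisfy $Q_i(W_j)=\delta_{ij}$, whereas on the $T_j$ one only has $Q_i(T_j)=\delta_{ij}(z_i\widehat{\otimes}1)$, and they lie in $F_0$. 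Hence, for $\Phi\in F_n$, the Taylor coefficients
\[
b_\alpha=\sum_{\beta\leq\alpha}\binom{\alpha}{\beta}(-W)^{\alpha-\beta}\Phi(W^\beta)
\]
lie in $F_n\OB_{\pdR}$. The shift $n\mapsto n-|\alpha|$ then enters exactly once, through $Q^{[\alpha]}=t^{|\alpha|}\delta^{[\alpha]}$; this is Lemma~\ref{lem:rho-bounded-taylor}.

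With this correction, the rest of your argument (descent, the cohomology of $F_{n-|\alpha|}\OB_{\pdR}$, PBW and gluing) goes through as in Lemma~\ref{lem:rho-affinoid-computation}.
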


The assertion is local on $X$. We may therefore assume that
$X=\Sp A$ admits an \'etale morphism to $\bT^d$ and retain the preceding
notation. Write $F_\bullet$ for the Hodge filtration and set
\begin{equation*}
  \delta_i:=z_i\partial_i,
  \qquad
  \cE:=
  R\shHom_{\bB_{\pdR}}^{F}
  \left(
    \OB_{\pdR},
    \OB_{\pdR}
  \right).
\end{equation*}
The internal Hom carries its usual filtration:
\begin{equation*}
  F_n\shHom_{\bB_{\pdR}}^{F}
  \left(
    \OB_{\pdR},
    \OB_{\pdR}
  \right)
  =
  \left\{
    \Phi\ \middle|\
    \Phi(F_m\OB_{\pdR})
    \subseteq
    F_{m+n}\OB_{\pdR}
    \text{ for every }m
  \right\}.
\end{equation*}
For $\alpha=(\alpha_1,\dots,\alpha_d)\in\bN^d$, write
\begin{equation*}
  |\alpha|:=\alpha_1+\cdots+\alpha_d,
  \qquad
  \alpha!:=\alpha_1!\cdots\alpha_d!,
  \qquad
  \delta^{[\alpha]}
  :=
  \frac{\delta_1^{\alpha_1}\cdots\delta_d^{\alpha_d}}{\alpha!}.
\end{equation*}

\begin{lem}\label{lem:rho-local-model}
  The complex $\cE$ is acyclic on every affinoid perfectoid object over
  $\widetilde{X}_C$. Consequently, the \v{C}ech complex of
  $\widetilde{X}_C\to X$ computes $R\Gamma(X_{\proet},\cE)$.
\end{lem}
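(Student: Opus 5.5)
Over an affinoid perfectoid $U$ over $\widetilde{X}_C$, we use Corollary~\ref{prop:localdescription-of-OBpdR-sheaves-recII}. It identifies $\OB_{\pdR}|_{\widetilde{X}_C}$ with $\bB_{\pdR}\widehat{\otimes}_{k_0}k_0[T]$. This object is the completed direct sum $\widehat{\bigoplus}_{\alpha}\bB_{\pdR}\,T^{\alpha}$, in which the generator $T^\alpha$ sits in filtration degree $-|\alpha|$ (since $T_i=u_i/t$ with $u_i\in F_{-1}$, $t^{-1}\in F_1$; one checks the filtration matches Proposition~\ref{prop:localdescription-of-OBdR+-sheaves-recII}). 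The plan is to compute
\[
R\shHom^{F}_{\bB_{\pdR}}(\OB_{\pdR},\OB_{\pdR})|_{U}
\]
in three steps.

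First, by Lemma~\ref{lem:completion-ghostadjunction-sheaves} applied with $\cR=\bB_{\pdR}$, we may replace the completed source by the uncompleted $\bB_{\pdR}\otimes_{k_0}k_0[T]=\bigoplus_\alpha \bB_{\pdR}(|\alpha|)$, a direct sum of filtration shifts of the free module. The hypothesis of the lemma is that $F_n\OB_{\pdR}\to R\varprojlim_m F_n/F_{n-m}$ is an isomorphism. We check it on affinoid perfectoids over $\widetilde X_C$ as in the proof of Proposition~\ref{prop:usolid-coho-2}: the graded pieces are sums of copies of $\widehat{\cO}$, hence acyclic by \cite[Lemma~4.10(v)]{Sch13pAdicHodge}, and the transition maps on sections are surjective, so \cite[Lemma~3.18]{Sch13pAdicHodge} applies.

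Second, since each summand is a shift of the free object, $R\shHom^F$ of a direct sum is the product
\[
\prod_\alpha R\shHom^F_{\bB_{\pdR}}(\bB_{\pdR}(|\alpha|),\OB_{\pdR}),
\]
and each factor is a shift of $\OB_{\pdR}$ itself, concentrated in degree $0$. Thus on $X_{\proet}/\widetilde X_C$ the complex $\cE$ is a product of filtration shifts of $\OB_{\pdR}$. Third, to conclude acyclicity on every affinoid perfectoid $U$ over $\widetilde X_C$, we need $H^q(U,F_n\OB_{\pdR})=0$ for $q>0$ and all $n$, and that products commute with $R\Gamma(U,-)$. The latter holds because $R\Gamma$ commutes with products (derived products of sheaves are products here, as the factors are acyclic on a basis). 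The vanishing follows again from the $\widehat{\cO}$-acyclicity of graded pieces plus the $R\varprojlim$ argument above, since $U$ is affinoid perfectoid. In the filtered (quasi-abelian) setting, the precise statement is that each $F_n$ of the complex has no higher cohomology on $U$, equivalently the Rees module is acyclic.

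For the consequence: $\widetilde{X}_C\to X$ is a pro-étale cover, and all its fibre products $\widetilde X_C\times_X\cdots\times_X\widetilde X_C$ are again affinoid perfectoid over $\widetilde X_C$; they are profinite sets times $\widetilde X_C$, using $\bZ_p(1)^d$ and $G_k$-type groups. So $\cE$ is acyclic on all terms of the Čech nerve, and the standard Čech-to-derived spectral sequence, applied filtration piece by filtration piece, shows the Čech complex computes $R\Gamma(X_{\proet},\cE)$. The main obstacle I expect is the first step: justifying that completion in the source can be ignored, and verifying that the filtration on the completed tensor product really is the product/sum filtration. Failing that, I would directly compute filtered Homs out of the completed module via continuity, using that any filtered map is continuous for the Hodge topology.
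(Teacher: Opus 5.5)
Your proposal is correct and follows essentially the paper's argument. You replace the completed source by $\bB_{\pdR}\otimes_{k_0}k_0[T]$ via Lemma~\ref{lem:completion-ghostadjunction-sheaves}, checking its $R\varprojlim$ hypothesis by the d\'evissage of Proposition~\ref{prop:usolid-coho-2}. You then use the monomial basis of $k_0[T]$ to reduce to copies of $\OB_{\pdR}$; the paper phrases this as tensor--Hom adjunction plus projectivity of $k_0[T]$ in $\Fil_{k_0}$, while you phrase it as ``sums go to products'' plus $R\Gamma$ commuting with products. Finally you conclude from the acyclicity of $F_n\OB_{\pdR}$ on affinoid perfectoids over $\widetilde{X}_C$, which covers every term of the \v{C}ech nerve.

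There is one slip, harmless for acyclicity. Since $u_i\in F_{-1}$ and $t^{-1}\in F_1$, you get $T_i\in F_0$ (of exact degree $0$), so $k_0[T]$ carries the trivial filtration, exactly as stated in Proposition~\ref{prop:localdescription-of-OBdR+-sheaves-recII}, and no shifts $(|\alpha|)$ occur. This matters for the filtration bookkeeping in the subsequent Taylor-expansion lemma, where the degree-zero operators are $Q_i=t\delta_i$.
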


\begin{proof}
  Fix an affinoid perfectoid $U$ over $\widetilde{X}_C$. By
  Corollary~\ref{prop:localdescription-of-OBpdR-sheaves-recII}, we have
  \[
    \OB_{\pdR}|_U
    \cong
    \bB_{\pdR}|_U
    \widehat{\otimes}_{k_0}k_0[T].
  \]
As explained in the proof of Proposition~\ref{prop:usolid-coho-2}, we have
\[
  F_n\OB_{\pdR}|_U
  \cong
  R\varprojlim_{m\geq1}
  \left(F_n\OB_{\pdR}/F_{n-m}\OB_{\pdR}\right)|_U.
\]
Thus Lemma~\ref{lem:completion-ghostadjunction-sheaves}
applies and gives the following identification:
  \begin{align*}
    R\shHom_{\bB_{\pdR}|_U}^{F}
    \left(
        \OB_{\pdR}|_U,
      \OB_{\pdR}|_U
    \right)
    &\cong R\shHom_{\bB_{\pdR}|_U}^{F}
    \left(
        \bB_{\pdR}|_U
        \widehat{\otimes}_{k_0}k_0[T],
      \OB_{\pdR}|_U
    \right) \\
    &\cong
    R\shHom_{\bB_{\pdR}|_U}^{F}
    \left(
      \bB_{\pdR}|_U
      \otimes_{k_0}k_0[T],
      \OB_{\pdR}|_U
    \right).
  \end{align*}
  Since each $\gr_r\OB_{\pdR}|_U$ is a direct sum of copies of
  $\widehat{\cO}|_U$, the same d\'evissage and completeness argument as in
  the proof of Proposition~\ref{prop:usolid-coho-2} gives
  \[
    R\Gamma(U,\OB_{\pdR}|_U)
    \cong
    \OB_{\pdR}(U).
  \]
  Consequently, derived tensor--Hom adjunction gives
  \begin{equation*}
    \begin{aligned}
      R\Gamma(U,\cE)
      &\cong
      R\Gamma
      \left(
        U,
        R\shHom_{\bB_{\pdR}|_U}^{F}
        \left(
            \bB_{\pdR}|_U
            \widehat{\otimes}_{k_0}k_0[T],
          \OB_{\pdR}|_U
        \right)
      \right) \\
      &\cong
      R\Gamma
      \left(
        U,
        R\shHom_{\bB_{\pdR}|_U}^{F}
        \left(
          \bB_{\pdR}|_U
          \otimes_{k_0}k_0[T],
          \OB_{\pdR}|_U
        \right)
      \right) \\
      &\cong
      R\intHom_{k_0}^{F}
      \left(
        k_0[T],
        R\Gamma\left(U,\OB_{\pdR}|_U\right)
      \right) \\
      &\cong
      R\intHom_{k_0}^{F}
      \left(
        k_0[T],
        \OB_{\pdR}(U)
      \right).
    \end{aligned}
  \end{equation*}
  Here the third isomorphism also uses the derived adjunction between
  sections over $U$ and extension by zero from $U$. Equivalently, it may be
  verified using an injective resolution, since sections over $U$ of an
  injective filtered sheaf are injective.
  Since $k_0[T]$ carries the trivial filtration, its monomial basis
  identifies it with a direct sum of copies of the projective object $k_0$.
  Hence $k_0[T]$ is projective in $\Fil_{k_0}$, and therefore
  \[
    R\intHom_{k_0}^{F}
    \left(
      k_0[T],
      \OB_{\pdR}(U)
    \right)
    \cong
    \intHom_{k_0}^{F}
    \left(
      k_0[T],
      \OB_{\pdR}(U)
    \right)
  \]
  is concentrated in degree zero. Thus $\cE$ is acyclic on $U$. The same
  argument applies to every iterated fibre product of the cover, so the
  associated \v{C}ech complex computes $R\Gamma(X_{\proet},\cE)$.
\end{proof}

\begin{lem}\label{lem:rho-bounded-taylor}
  On $\widetilde{X}_C$, define
  \begin{equation*}
    W_i
    :=
    \frac{1}{t}
    \log\left(
      \frac{z_i\widehat{\otimes}1}
      {1\widehat{\otimes}[z_i^\flat]}
    \right).
  \end{equation*}
  The change of variables from $T$ to
  $W=(W_1,\dots,W_d)$ is strict for the Hodge filtration. If
  $\gamma_1,\dots,\gamma_d$ are the standard generators of the geometric
  Galois group $\bZ_p(1)^d$, then
  \begin{equation*}
    \gamma_i(W_j)=W_j-\delta_{ij},
    \qquad
    \delta_i(W_j)=t^{-1}\delta_{ij}.
  \end{equation*}
  Hence $Q_i:=t\delta_i$ has filtration degree zero and acts as
  $\partial/\partial W_i$. Put
  \begin{equation*}
    Q^{[\alpha]}
    :=
    \frac{Q_1^{\alpha_1}\cdots Q_d^{\alpha_d}}{\alpha!}.
  \end{equation*}

  On every term $U$ of the \v{C}ech nerve and for every $n\in\bZ$,
  Taylor expansion induces natural strict isomorphisms
  \begin{equation}\label{eq:rho-taylor-delta}
    \begin{aligned}
      F_nR\Gamma(U,\cE)
      &\cong
      \prod_{\alpha\in\bN^d}
      F_n\OB_{\pdR}(U)\,Q^{[\alpha]}\\
      &\cong
      \prod_{\alpha\in\bN^d}
      F_{n-|\alpha|}\OB_{\pdR}(U)\,
      \delta^{[\alpha]}.
    \end{aligned}
  \end{equation}
  In the second description the basis elements
  $\delta^{[\alpha]}$ are defined over $X$; consequently,
  \eqref{eq:rho-taylor-delta} is compatible with the face and degeneracy
  maps of the \v{C}ech nerve coefficientwise.
\end{lem}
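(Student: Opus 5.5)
First I compute the action of the connection and of the Galois group on $W_j$. Write $z_j\widehat\otimes 1 = (1\widehat\otimes[z_j^\flat])(1+tT_j/[z_j^\flat])$, using $u_j=tT_j$ from Proposition~\ref{prop:localdescription-of-OBdR+-sheaves-recII}. Then
\[
  W_j=t^{-1}\log\bigl(1+tT_j/[z_j^\flat]\bigr)
  =\sum_{m\ge1}\frac{(-1)^{m-1}}{m}\,t^{m-1}\bigl(T_j/[z_j^\flat]\bigr)^m .
\]
This series converges in the filtration topology, because $t^{m-1}$ lies in $F_{1-m}$. Its leading term is $T_j/[z_j^\flat]$, a unit multiple of $T_j$ in filtration degree $0$, and the higher terms lie in $F_{-1}$ modulo degree-zero polynomial data. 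Hence $T\mapsto W$ is a triangular, filtration-preserving substitution whose associated graded is an invertible linear change. Since both sides are complete (Corollary~\ref{prop:localdescription-of-OBpdR-sheaves-recII}), it is strict with strict inverse.

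For the derivatives, $\partial_i$ kills $[z_j^\flat]$ and sends $z_j$ to $\delta_{ij}$. Differentiating the logarithm gives $\partial_i W_j=\delta_{ij}/(t z_j)$, so $\delta_i W_j=t^{-1}\delta_{ij}$. For the Galois action, $\gamma_i$ fixes $z_j$ and sends $[z_j^\flat]$ to $[\epsilon]^{\delta_{ij}}[z_j^\flat]$. Since $\log[\epsilon]=t$, this gives $\gamma_i W_j=W_j-\delta_{ij}$. It follows that $Q_i=t\delta_i$ preserves the filtration and equals $\partial/\partial W_i$ on $\bB_{\pdR}\widehat\otimes k_0[W]$.

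For the Taylor isomorphism I would start from the proof of Lemma~\ref{lem:rho-local-model}, which gives, on each affinoid perfectoid $U$ over $\widetilde X_C$ (so on every \v{C}ech term),
\[
  R\Gamma(U,\cE)\cong\intHom^F_{k_0}\bigl(k_0[T],\OB_{\pdR}(U)\bigr)\cong\prod_\alpha\OB_{\pdR}(U),
\]
and I would rewrite this with $k_0[W]$ in place of $k_0[T]$. The strictness above is what lets me replace $T$ by $W$ in Lemma~\ref{lem:rho-local-model}. A filtered $k_0$-linear map $\phi\colon k_0[W]\to\OB_{\pdR}(U)$ of degree $n$ is then the same as a family of values $\phi(W^\alpha)\in F_n\OB_{\pdR}(U)$, since $k_0[W]$ carries the trivial filtration.

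The main obstacle is to convert these values into Taylor coefficients $b_\alpha$ with $\Phi=\sum_\alpha b_\alpha Q^{[\alpha]}$, as maps $\OB_{\pdR}(U)\to\OB_{\pdR}(U)$ that are $\bB_{\pdR}$-linear and continuous. Evaluating on monomials gives $\Phi(W^\beta)=\sum_{\alpha\le\beta}\binom{\beta}{\alpha}b_\alpha W^{\beta-\alpha}$. This system is unitriangular with integer coefficients, so it can be inverted by the Möbius-type formula
\[
  b_\alpha=\sum_{\beta\le\alpha}(-1)^{|\alpha-\beta|}\binom{\alpha}{\beta}W^{\alpha-\beta}\,\Phi(W^\beta).
\]
Here I have to be careful with filtrations. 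Each $W_j$ lies in filtration degree $0$, so the inversion preserves $F_n$ in both directions, and the formal sum converges as a map because $Q^{[\alpha]}$ lowers $W$-degree. This gives the first strict isomorphism in \eqref{eq:rho-taylor-delta}.

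For the second isomorphism I substitute $Q^{[\alpha]}=t^{|\alpha|}\delta^{[\alpha]}$. This holds because $t$ is central and horizontal, so $Q_i^{\alpha_i}=t^{\alpha_i}\delta_i^{\alpha_i}$. Multiplication by $t^{|\alpha|}$ is a strict isomorphism $F_{n-|\alpha|}\to F_n$, which gives the coefficient shift. Finally, the $\delta_i=z_i\partial_i$ come from $\cD(X)$, so the operators $\delta^{[\alpha]}$ are the same on every term of the \v{C}ech nerve. Face and degeneracy maps are restrictions of $\nu^{-1}\cD$-linear structures, so they act on the coefficients only, which gives the asserted coefficientwise compatibility.
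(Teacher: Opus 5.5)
Your proposal is correct and follows the paper's proof. It uses the same change of variables to $W$ (you get strictness from the associated graded plus completeness, where the paper writes down the inverse $T_i=(1\widehat{\otimes}[z_i^\flat])(\exp(tW_i)-1)/t$), the same Taylor expansion in the $Q^{[\alpha]}$ (which you make explicit by binomial inversion of the values $\Phi(W^\beta)$), and the same passage to $\delta^{[\alpha]}$ via $Q^{[\alpha]}=t^{|\alpha|}\delta^{[\alpha]}$. One small slip: since $t\in F_{-1}$, multiplication by $t^{|\alpha|}$ maps $F_n$ isomorphically onto $F_{n-|\alpha|}$, not $F_{n-|\alpha|}$ onto $F_n$, and this is exactly what sends $b_\alpha\in F_n$ to the $\delta^{[\alpha]}$-coefficient $t^{|\alpha|}b_\alpha\in F_{n-|\alpha|}$.
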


\begin{proof}
  The identities
  \begin{equation*}
    W_i
    =
    \frac{1}{t}
    \log\left(
      1+
      \frac{tT_i}{1\widehat{\otimes}[z_i^\flat]}
    \right),
    \qquad
    T_i
    =
    \left(1\widehat{\otimes}[z_i^\flat]\right)
    \frac{\exp(tW_i)-1}{t}
  \end{equation*}
  show that the two changes of variables are mutually inverse and strict.
  The formulas for the actions of $\gamma_i$ and $\delta_i$ follow
  directly from the definitions.

  Every $\Phi\in F_nR\Gamma(U,\cE)$ has a unique Taylor expansion
  \begin{equation*}
    \Phi
    =
    \sum_{\alpha\in\bN^d}
    b_\alpha Q^{[\alpha]},
    \qquad
    b_\alpha\in F_n\OB_{\pdR}(U).
  \end{equation*}
  Conversely, every such family defines an endomorphism: for every
  $f\in\OB_{\pdR}(U)$, the terms $Q^{[\alpha]}(f)$ tend to zero as
  $|\alpha|\to\infty$, so the displayed sum converges in the complete
  filtered module $\OB_{\pdR}(U)$.

  Since
  \[
    Q^{[\alpha]}
    =
    t^{|\alpha|}\delta^{[\alpha]}
  \]
  and multiplication by $t^{|\alpha|}$ identifies
  $F_n\OB_{\pdR}(U)$ with
  $F_{n-|\alpha|}\OB_{\pdR}(U)$, this gives the second isomorphism in
  \eqref{eq:rho-taylor-delta}. Finally, the operators $\delta_i=z_i\partial_i$
  are defined over $X$, so in the second description the face and
  degeneracy maps act only on the coefficients. This proves the asserted
  compatibility with the \v{C}ech nerve.
\end{proof}

We set $B_{\dR}:=\bB_{\dR}(C,\cO_C)$ and $B_{\pdR}:=B_{\dR}[\log t]$.

\begin{lem}\label{lem:rho-affinoid-computation}
  There is a natural strict filtered quasi-isomorphism
  \begin{equation*}
    R\Gamma(X_{\proet},\cE)
    \cong
    \bigoplus_{\alpha\in\bN^d}
    A\delta^{[\alpha]}.
  \end{equation*}
  The filtration on the right is the order filtration:
  \begin{equation*}
    F_n
    \left(
      \bigoplus_{\alpha\in\bN^d}
      A\delta^{[\alpha]}
    \right)
    =
    \bigoplus_{|\alpha|\leq n}
    A\delta^{[\alpha]}.
  \end{equation*}
  In particular, the left-hand side is concentrated in cohomological
  degree zero. The identification is compatible with composition and
  restriction to affinoid subdomains.
\end{lem}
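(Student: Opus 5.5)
By Lemma~\ref{lem:rho-local-model}, $R\Gamma(X_{\proet},\cE)$ is computed by the \v{C}ech complex of the cover $\widetilde X_C\to X$. By Lemma~\ref{lem:rho-bounded-taylor}, the $n$th filtered piece of the $q$th term of that complex is
\[
  \prod_{\alpha\in\bN^d}F_{n-|\alpha|}\OB_{\pdR}(U_q)\,\delta^{[\alpha]},
\]
where $U_q$ is the $(q+1)$-fold fibre product. The face maps act coefficientwise, so for each fixed $n$ the filtered \v{C}ech complex splits as a product over $\alpha$ of the \v{C}ech complexes of the sheaves $F_{n-|\alpha|}\OB_{\pdR}$. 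These \v{C}ech complexes compute $R\Gamma(X_{\proet},F_{n-|\alpha|}\OB_{\pdR})$, because $\OB_{\pdR}$ and its filtered pieces are acyclic on affinoid perfectoids over $\widetilde X_C$, as in the proof of Proposition~\ref{prop:usolid-coho-2}. Products of complexes are exact, so
\[
  F_nR\Gamma(X_{\proet},\cE)\cong\prod_{\alpha}R\Gamma(X_{\proet},F_{n-|\alpha|}\OB_{\pdR})\,\delta^{[\alpha]}.
\]

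Proposition~\ref{prop:usolid-coho-2} now identifies each factor. It is $A[0]$ when $|\alpha|\le n$ and $0$ when $|\alpha|>n$. So the product is the finite sum $\bigoplus_{|\alpha|\le n}A\delta^{[\alpha]}$ in degree zero. This gives the finiteness: the coefficient $b_\alpha$ of an invariant operator must lie in $F_{n-|\alpha|}$ of the Galois-fixed part, and that part is $A$ in degree $\le0$ and vanishes in negative filtration. Hence only $|\alpha|\le n$ survive. The identifications for different $n$ are compatible with the inclusions $F_n\subseteq F_{n+1}$, since they all come from the canonical map $A\to R\Gamma(F_m\OB_{\pdR})$. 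Taking the colimit over $n$ gives $\bigoplus_\alpha A\delta^{[\alpha]}$, and its filtration is the order filtration. The map is a strict quasi-isomorphism because it is an isomorphism on each $F_n$.

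Next, the identification should be the inverse of $\rho$ on global sections. The element $\delta^{[\alpha]}\in\cD(X)$ acts on $\OB_{\pdR}$ through the $\nu^{-1}\cD$-action, and its Taylor expansion has the single coefficient $1$ in position $\alpha$. So the map $\cD(X)=\bigoplus A\partial^\alpha\to H^0$ sends $\delta^{[\alpha]}$ to the basis element; here Lemma~\ref{lem:D-local-description} gives $\bigoplus_{|\alpha|\le n} A\delta^{[\alpha]}=\cD_{\le n}(X)$, since the $z_i$ are units and the change of basis is triangular. Compatibility with composition follows because $\rho$ is induced by the module structure, and $\rho$ is a ring homomorphism into endomorphisms. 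Compatibility with restriction to affinoid subdomains follows from the naturality of the Taylor expansion and of Proposition~\ref{prop:usolid-coho-2}: subdomains inherit the étale map to $\bT^d$.

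The main obstacle is commuting the product over $\alpha$ with the \v{C}ech totalisation and with $R\Gamma$. The \v{C}ech complex is a cosimplicial object, possibly infinite in length, and we need the product of the cohomologies to be the cohomology of the product. Products are exact in $k_0$-vector spaces (and in $\Fil_{k_0}$ on filtered pieces), so this holds degreewise, and the spectral sequence issue is avoided by working with a fixed $n$ and each factor separately. A secondary point is checking that Proposition~\ref{prop:usolid-coho-2} is really computed by the same \v{C}ech complex; this follows because the \v{C}ech-to-derived spectral sequence degenerates, using acyclicity on all terms of the nerve.
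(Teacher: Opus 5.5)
Your proposal is correct and is essentially the paper's argument. You compute via the \v{C}ech complex (Lemma~\ref{lem:rho-local-model}), split each filtered piece coefficientwise using the $\delta^{[\alpha]}$-form of the Taylor expansion (Lemma~\ref{lem:rho-bounded-taylor}), pass cohomology through the product using exactness of products at a fixed filtration step, and use Proposition~\ref{prop:usolid-coho-2} to see that only the finitely many factors with $|\alpha|\leq n$ survive, each contributing $A[0]$. Your extra identification with $\cD_{\leq n}(X)$, via the triangular change of basis from $\partial^\alpha$ to $\delta^{[\alpha]}$, is what the paper does afterwards in the proof of Theorem~\ref{thm:rho-iso}.
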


\begin{proof}
  By Lemma~\ref{lem:rho-local-model}, the \v{C}ech complex of
  $\widetilde X_C\to X$ computes $R\Gamma(X_{\proet},\cE)$.
  By Lemma~\ref{lem:rho-bounded-taylor}, its $n$th filtered piece
  identifies coefficientwise with
  \begin{equation*}
    \prod_{\alpha\in\bN^d}
    R\Gamma\left(
      X_{\proet},
      F_{n-|\alpha|}\OB_{\pdR}
    \right)
    \delta^{[\alpha]}.
  \end{equation*}
  Here we use the affinoid-perfectoid acyclicity from the proof of
  Proposition~\ref{prop:usolid-coho-2}; since products are exact at each
  fixed filtration step, the coefficientwise \v{C}ech computation may be
  taken through the product.

  Proposition~\ref{prop:usolid-coho-2} gives
  \begin{equation*}
    R\Gamma\left(
      X_{\proet},
      F_{n-|\alpha|}\OB_{\pdR}
    \right)
    \cong
    \begin{cases}
      A[0], & |\alpha|\leq n,\\
      0, & |\alpha|>n.
    \end{cases}
  \end{equation*}
  Consequently,
  \begin{equation*}
    F_nR\Gamma(X_{\proet},\cE)
    \cong
    \bigoplus_{|\alpha|\leq n}
    A\delta^{[\alpha]}.
  \end{equation*}
  These identifications are compatible as $n$ varies and therefore give
  the asserted strict filtered quasi-isomorphism
  \[
    R\Gamma(X_{\proet},\cE)
    \cong
    \bigoplus_{\alpha\in\bN^d}
    A\delta^{[\alpha]}.
  \]
  Compatibility with composition and restriction follows from the
  functoriality of the Taylor expansion and from the fact that the
  operators $\delta_i$ are defined over $X$.
\end{proof}

\begin{proof}[Proof of Theorem~\ref{thm:rho-iso}]
  Toric affinoids form a basis. On every such affinoid,
  Lemmas~\ref{lem:D-local-description} and
  \ref{lem:rho-affinoid-computation} identify $\rho$ with the strict
  filtered isomorphism
  \begin{equation*}
    \cD(X)
    =
    \bigoplus_{\alpha\in\bN^d}
    A\delta^{[\alpha]}
    \isomap
    R\Gamma(X_{\proet},\cE).
  \end{equation*}
  These identifications respect multiplication and restriction, and
  therefore glue.
\end{proof}

\section{The reconstruction theorem}
\label{sec:reconstruction-recII}

We again fix a smooth rigid-analytic $k$-variety $X$. All filtrations
considered in this section are increasing and exhaustive.

\subsection{Filtered Rees formalism}

As before, we view $\cD$, $\bB_{\pdR}$, and $\OB_{\pdR}$
as sheaves of filtered rings. Using Schneiders' formalism
for quasi-abelian categories, we obtain the derived categories
of filtered modules over $\cD$, $\bB_{\pdR}$, and $\OB_{\pdR}$,
respectively. We denote these categories by
\begin{equation*}
  D_{F}\left(\cD\right), \quad
  D_{F}\left(\bB_{\pdR}\right), \quad
  D_{F}\left(\OB_{\pdR}\right).
\end{equation*}
As explained in \S\ref{subsec:Fil}, these categories are naturally equivalent
to the derived categories of sheaves of graded modules over the corresponding
Rees rings. We denote the latter categories by
\begin{equation*}
  D_{\gr}\left(\sR\cD\right), \quad
  D_{\gr}\left(\sR\bB_{\pdR}\right), \quad
  D_{\gr}\left(\sR\OB_{\pdR}\right).
\end{equation*}

\begin{defn}
  The triangulated category of filtered $\cD$-modules on $X$ with good
  filtrations is the full subcategory
  \begin{equation*}
    D_{F,\coh}^{b}(\cD)
    :=
    D_{\coh,\mathrm{gr}}^{b}(\sR\cD)
    \subset
    D_{\mathrm{gr}}^{b}(\sR\cD)
    \simeq
    D_F^{b}(\cD)
  \end{equation*}
  corresponding to bounded complexes of sheaves of $\sR\cD$-modules
  with coherent cohomology.
\end{defn}

\subsection{An adjunction}

In this subsection, all derived internal Hom and derived Hom functors are
taken in the corresponding filtered derived categories.

Define contravariant functors
\begin{equation*}
\begin{aligned}
  \Sol_{F}\colon
  D_{F}(\cD)^{\op}
  &\to
  D_{F}(\bB_{\pdR}),
  \\
  \cal N^\bullet
  &\mapsto
  R\shHom_{\nu^{-1}\cD}^{F}
  \left(\nu^{-1}\cal N^\bullet,\OB_{\pdR}\right)
\end{aligned}
\end{equation*}
and
\begin{equation*}
\begin{aligned}
  \Rec_{F}\colon
  D_{F}(\bB_{\pdR})^{\op}
  &\to
  D_{F}(\cD),
  \\
  \cal G^\bullet
  &\mapsto
  R\nu_*R\shHom_{\bB_{\pdR}}^{F}
  \left(\cal G^\bullet,\OB_{\pdR}\right).
\end{aligned}
\end{equation*}

Tensor--Hom adjunction gives
\begin{equation}\label{eq:filtered-Sol-Rec-adjunction}
  R\Hom_{\cD}^{F}
  \left(\cal N^\bullet,\Rec_{F}(\cal G^\bullet)\right)
  \cong
  R\Hom_{\bB_{\pdR}}^{F}
  \left(\cal G^\bullet,\Sol_{F}(\cal N^\bullet)\right).
\end{equation}
The identity of $\Sol_{F}(\cal N^\bullet)$ corresponds under
\eqref{eq:filtered-Sol-Rec-adjunction} to the natural morphism
\begin{equation*}
  \rho_{\cal N^\bullet}^{F}\colon
  \cal N^\bullet
  \to
  \Rec_{F}\left(\Sol_{F}(\cal N^\bullet)\right).
\end{equation*}

\subsection{Ingredients for reconstruction}

\subsubsection{The Rees endomorphism identity}

We shall use the following standard compatibility of the derived Rees
equivalence with the two operations appearing above.

\begin{lem}\label{lem:rees-functoriality-recII}
  Let $\cal K^\bullet$ be a complex of filtered sheaves on $X_{\proet}$,
  and let $\cal L^\bullet$ and $\cal M^\bullet$ be complexes of filtered
  modules over a filtered sheaf of rings $\cal A$. There are natural
  isomorphisms
  \begin{equation*}
    \sR\left(R\nu_*\cal K^\bullet\right)
    \cong
    R\nu_*\left(\sR\cal K^\bullet\right)
  \end{equation*}
  and
  \begin{equation*}
    \sR\left(
      R\shHom_{\cal A}^{F}
      \left(\cal L^\bullet,\cal M^\bullet\right)
    \right)
    \cong
    R\shHom^{\mathrm{gr}}_{\sR\cal A}
    \left(\sR\cal L^\bullet,\sR\cal M^\bullet\right),
  \end{equation*}
where the superscript $\gr$ denotes the graded internal Hom; its degree-$n$
component consists of homogeneous morphisms which send degree $p$ to degree
$p+n$.
\end{lem}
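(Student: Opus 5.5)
The plan is to reduce both isomorphisms to two facts: the Rees functor $\sR$ is exact and compatible with the relevant underived operations, and it sends injective objects to objects acyclic for the corresponding graded operations. We work throughout with Schneiders' equivalence $D(\Fil)\simeq D(\Gr_{\sR})$ from \S\ref{subsec:Fil}. By the way this equivalence is constructed, the derived category on the filtered side may be computed by complexes of strict objects, and $\sR$ sends strictly exact sequences to exact sequences of graded modules. So $\sR$ is an exact functor of triangulated categories. It therefore suffices to produce the two isomorphisms underived, on suitable acyclic resolutions, and check they are natural.

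\textbf{Pushforward.} For a filtered sheaf $\cal K$ on $X_{\proet}$ we have $\sR(\nu_*\cal K)=\bigoplus_n \nu_*(F_n\cal K)\hbar^n$. Likewise $\nu_*(\sR\cal K)=\nu_*\bigl(\bigoplus_n F_n\cal K\hbar^n\bigr)$. These agree, since $\nu_*$ commutes with direct sums on quasi-compact objects. Indeed, the sections over a qcqs étale object of $X$ of a direct sum are computed on qcqs pro-étale objects, and those are coherent by \cite[Proposition~3.12]{Sch13pAdicHodge}. As $\sR\cal K$ is graded, one can alternatively take the graded pushforward degreewise, which avoids the issue entirely.

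To derive, resolve $\cal K^\bullet$ by injective filtered sheaves $\cal I^\bullet$. The key point is that for an injective object of $\Sh(X_{\proet},\Fil)$, each $F_n\cal I$ is $\nu_*$-acyclic. This holds because the functor $\cal I\mapsto F_n\cal I$ has an exact left adjoint, $\cal G\mapsto$ ($\cal G$ placed in filtration degrees $\geq n$), so it preserves injectives. Consequently $\sR\cal I^\bullet$ is degreewise $\nu_*$-acyclic, and both sides are computed by $\nu_*$ applied to $\sR\cal I^\bullet$.

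\textbf{Internal Hom.} Underived, a filtered $\cal A$-linear map $\Phi\in F_n\shHom^F_{\cal A}(\cal L,\cal M)$ shifts filtrations by $n$. Sending $\Phi$ to the graded map $\sum_p x\hbar^p\mapsto \Phi(x)\hbar^{p+n}$ gives a homogeneous $\sR\cal A$-linear map of degree $n$. Conversely, a homogeneous degree-$n$ map $\psi\colon\sR\cal L\to\sR\cal M$ commutes with $\hbar$. Since $\cal L=\varinjlim_p F_p\cal L$ is the $\hbar$-colimit, $\psi$ induces $\Phi=\varinjlim\psi_p$ on $\cal L=\sR\cal L/(\hbar-1)$, and $\Phi$ lies in $F_n$. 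These constructions are inverse, giving $\sR\shHom^F=\shHom^{\gr}$ with the displayed grading.

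To derive, take $\cal M^\bullet\to\cal I^\bullet$ an injective resolution in filtered $\cal A$-modules. We then need $\sR\cal I$ to be injective, or at least $\shHom^{\gr}$-acyclic, as a graded $\sR\cal A$-module. For this we use the fact that the equivalence $LH(\Fil)\simeq\Gr_{\sR}$ identifies injectives of the quasi-abelian category that are strict with injectives of the heart. Schneiders shows that injective objects of an elementary quasi-abelian category remain injective in $LH$ (\cite[Prop.~1.2.31ff]{Sch99}). With that in hand, both sides are computed by the same complex.

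\textbf{Main obstacle.} The main obstacle is this injectivity transfer in the sheaf-theoretic setting, together with the compatibility of $\sR$ with the quasi-abelian derived internal Hom. I would try to deduce it directly from the adjunction $\Hom^{\gr}(\cal G,\sR\cal I)\cong\Hom^F(\cal G\otimes_{\sR}\cal A$-realisation$,\cal I)$. If this fails, I would fall back on computing both sides via a resolution of $\cal L^\bullet$ by filtered-free $\cal A(-n)$-modules on slices, using that $\sR(\cal A(-n))=\sR\cal A(-n)$ is projective-like for $\shHom^{\gr}$.
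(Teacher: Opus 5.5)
Your route is the paper's: establish the underived identifications (pushforward filtration-wise; homogeneous degree-$n$ $\sR\cA$-maps versus $F_n\shHom^F_{\cA}$, using exhaustivity and $\hbar$-compatibility), then pass to resolutions. The pushforward half is correct. Your observation that $\cI\mapsto F_n\cI$ has the exact left adjoint ``place $\cG$ in filtration degrees $\geq n$'', and therefore preserves injectives, is a better justification than the paper's one-line ``computed filtrationwise''. The underived Hom identification is also correct.

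The internal-Hom half, however, rests on a false claim: injectives of $\Fil$ need not remain injective in $LH(\Fil)\simeq\Gr_{\sR}$.

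\emph{Counterexample.} Take $\cA=k$ on a point, trivially filtered, so $\sR k=k[\hbar]$. The unit $k$ is injective in $\Fil_k$. Indeed, if $A\subseteq B$ is strict and $f\colon A\to k$ is filtered, then $f$ kills $F_{-1}A=A\cap F_{-1}B$. So $f$ and $0$ on $F_{-1}B$ glue on $A+F_{-1}B$, and extending linearly gives a filtered map on $B$. But $k[\hbar]$ is not injective among graded $k[\hbar]$-modules. A graded retraction $r$ of $k[\hbar]\subset k[\hbar^{\pm1}]$ would give $1=\hbar\, r(\hbar^{-1})$, yet $r(\hbar^{-1})$ lies in the degree $-1$ part of $k[\hbar]$, which is zero.

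In Schneiders' theory it is projectives that pass to the left heart; injectives pass to the right heart. Your adjunction fallback fails for the same reason. The left adjoint of $\sR$ does not send monomorphisms to strict ones: $k[\hbar]\subset k[\hbar^{\pm1}]$ goes to the identity of $k$ into the constant filtration, which is not strict. Hence the adjunction cannot transport injectivity.

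What you actually need is weaker and true: $\sR\cI$ is acyclic for $\shHom^{\gr}_{\sR\cA}(\sR\cL,-)$ whenever $\cL$ is a filtered module. Get it from the derived Rees equivalence applied on every slice of the site. Restriction to a slice commutes with $\sR$ and preserves injectives, since extension by zero is an exact left adjoint.

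Concretely, work in each graded degree, i.e.\ after a filtration shift. On $H^i$ of sections over $U$, the canonical map $\shHom^{\gr}(\sR\cL^\bullet,\sR\cI^\bullet)\to R\shHom^{\gr}(\sR\cL^\bullet,\sR\cM^\bullet)$ induces the map $\Hom_{D(\Fil_{\cA|_U})}(\cL^\bullet|_U,\cM^\bullet|_U[i])\to\Hom_{D(\Gr_{\sR\cA|_U})}(\sR\cL^\bullet|_U,\sR\cM^\bullet|_U[i])$ coming from the equivalence. That map is bijective, so the comparison is a quasi-isomorphism after sheafification; use K-injective resolutions for unbounded complexes.

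The paper leaves exactly this step implicit (``applying this identification to resolutions''). With this replacement, your proposal becomes a complete version of its argument.
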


\begin{proof}
  Direct image in the category of filtered sheaves is computed
  filtrationwise. Hence, for every $p\in\bZ$, the degree-$p$ component of
  the first morphism is the natural isomorphism
  \begin{equation*}
    \left(
      \sR\left(R\nu_*\cal K^\bullet\right)
    \right)_p
    =
    R\nu_*\left(F_p\cal K^\bullet\right)\hbar^p
    \cong
    \left(
      R\nu_*\left(\sR\cal K^\bullet\right)
    \right)_p.
  \end{equation*}
  These isomorphisms are compatible with multiplication by $\hbar$ and
  therefore assemble to the first isomorphism.

  For the second assertion, recall that the increasing filtration on the
  internal Hom is given by
  \begin{equation*}
    F_n\shHom_{\cal A}^{F}(\cal L,\cal M)
    =
    \left\{
      f\ \middle|\
      f(F_p\cal L)\subseteq F_{p+n}\cal M
      \text{ for every }p\in\bZ
    \right\}.
  \end{equation*}
  Such a morphism $f$ determines a homogeneous $\sR\cal A$-linear
  morphism of degree $n$ by
  \begin{equation*}
    x\hbar^p
    \mapsto
    f(x)\hbar^{p+n}.
  \end{equation*}
  Conversely, every homogeneous morphism of degree $n$ arises in this
  way. Here exhaustivity permits one to choose $p$ such that
  $x\in F_p\cal L$, while compatibility with multiplication by $\hbar$
  shows that the resulting value is independent of this choice. Thus
  there is a natural isomorphism
  \begin{equation*}
    \sR\shHom^{F}_{\cal A}(\cal L,\cal M)
    \cong
    \shHom^{\mathrm{gr}}_{\sR\cal A}
    \left(\sR\cal L,\sR\cal M\right).
  \end{equation*}
  Applying this identification to resolutions computing the derived
  internal Hom gives the second isomorphism.
\end{proof}

\begin{cor}\label{cor:rho-rees-iso}
  The action of $\sR\cD$ on $\sR\OB_{\pdR}$ induces an isomorphism of
  graded algebra objects
  \begin{equation}\label{eq:rho-rees-iso}
    \sR\cD
    \isomap
    R\nu_*
    R\shHom^{\mathrm{gr}}_{\sR\bB_{\pdR}}
    \left(
      \sR\OB_{\pdR},
      \sR\OB_{\pdR}
    \right).
  \end{equation}
\end{cor}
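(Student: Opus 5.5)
The plan is to deduce the corollary from Theorem~\ref{thm:rho-iso} by applying the Rees functor $\sR$ and using Lemma~\ref{lem:rees-functoriality-recII} to move $\sR$ past $R\nu_*$ and $R\shHom^{F}$.

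First, apply $\sR$ to the isomorphism $\rho\colon\cD\isomap R\nu_*\cE$ of Theorem~\ref{thm:rho-iso}, where $\cE=R\shHom^{F}_{\bB_{\pdR}}(\OB_{\pdR},\OB_{\pdR})$. The functor $\sR$ induces the equivalence $D(\Sh(X,\Fil_k))\simeq D_{\gr}$ recalled in \S\ref{subsec:Fil}, so it preserves isomorphisms. This gives
\[
\sR\cD\isomap\sR\left(R\nu_*\cE\right).
\]

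Next, rewrite the target using Lemma~\ref{lem:rees-functoriality-recII}. The first isomorphism of that lemma, with $\cal K^\bullet=\cE$, identifies $\sR(R\nu_*\cE)$ with $R\nu_*(\sR\cE)$. The second, with $\cal A=\bB_{\pdR}$ and $\cal L^\bullet=\cal M^\bullet=\OB_{\pdR}$, identifies $\sR\cE$ with $R\shHom^{\gr}_{\sR\bB_{\pdR}}(\sR\OB_{\pdR},\sR\OB_{\pdR})$. Composing these yields \eqref{eq:rho-rees-iso} as an isomorphism of graded objects.

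We must still check that the composite equals the map induced by the action of $\sR\cD$ on $\sR\OB_{\pdR}$, and that it respects multiplication. The action of $\cD$ on $\OB_{\pdR}$ sends $\cD_{\leq n}$ into $F_n\shHom^F$, because order-$n$ operators raise the Hodge filtration by $n$. Under the identification in the proof of Lemma~\ref{lem:rees-functoriality-recII}, a degree-$n$ filtered morphism $f$ goes to $x\hbar^p\mapsto f(x)\hbar^{p+n}$. Hence $P\hbar^n$ with $P\in\cD_{\leq n}$ goes to the homogeneous endomorphism $x\hbar^p\mapsto P(x)\hbar^{p+n}$, which is exactly the $\sR\cD$-action. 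The multiplicativity of $\rho$ recorded in Lemma~\ref{lem:rho-affinoid-computation} transfers through these identifications, since composing degree-$m$ and degree-$n$ morphisms corresponds to multiplying the Rees components, with $\hbar^m\hbar^n=\hbar^{m+n}$.

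The main obstacle is this compatibility with algebra structures at the derived level. By Lemma~\ref{lem:rho-affinoid-computation}, $R\nu_*\cE$ is concentrated in degree zero and given by the underived sections, so the algebra structure is simply composition of honest endomorphisms. I would therefore verify multiplicativity on $\cH^0$ locally on toric affinoids, using the explicit description $\bigoplus_\alpha A\delta^{[\alpha]}$, and then glue.
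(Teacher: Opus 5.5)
Your proposal is correct and is essentially the paper's argument: apply the Rees functor to the isomorphism of Theorem~\ref{thm:rho-iso}, move it past $R\nu_*$ and $R\shHom^{F}$ using the two isomorphisms of Lemma~\ref{lem:rees-functoriality-recII}, and get multiplicativity from the fact that $\rho$ is induced by the $\cD$-action. Your extra checks only spell out the paper's one-line compatibility remark: that $P\hbar^n$ goes to the homogeneous endomorphism $x\hbar^p\mapsto P(x)\hbar^{p+n}$, and that multiplicativity can be verified on $\cH^0$ locally because everything is concentrated in degree zero.
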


\begin{proof}
  The strict filtered isomorphism of
  Theorem~\ref{thm:rho-iso} becomes \eqref{eq:rho-rees-iso} by
  Lemma~\ref{lem:rees-functoriality-recII}. Since the morphism in
  Theorem~\ref{thm:rho-iso} is induced by the action of $\cD$ on
  $\OB_{\pdR}$, this identification is compatible with multiplication.
\end{proof}

\subsubsection{Perfectness}

An object of $D_{\mathrm{gr}}(\sR\cD)$ is \emph{locally perfect} if it is
locally quasi-isomorphic to a bounded complex of finitely generated
projective graded $\sR\cD$-modules.

\begin{lem}\label{lem:D-finite-global-dimension}
  Let $X=\Sp A$ be smooth of pure dimension $d$. Then $\cD(X)$ and
  $\sR\cD(X)$ are noetherian and
  \begin{equation*}
    \operatorname{l.gldim}\cD(X)\leq2d,
    \qquad
    \operatorname{l.gldim}\sR\cD(X)\leq2d+1.
  \end{equation*}
\end{lem}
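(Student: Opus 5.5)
Since $X=\Sp A$ is smooth, we may cover it by finitely many affinoid subdomains admitting étale maps to $\bT^d$. We first treat the case where $X$ itself carries such a chart, so that Lemma~\ref{lem:D-local-description} applies. Then $\cD(X)$ has an exhaustive increasing filtration $\cD_{\leq n}(X)$ with $\gr\cD(X)\cong\Sym_A(\cT(X))\cong A[\xi_1,\dots,\xi_d]$. The ring $A$ is noetherian because it is affinoid, and it is regular of dimension $d$ because $X$ is smooth; hence $\gr\cD(X)$ is a commutative noetherian regular ring of global dimension $2d$. Since the filtration is positive, i.e.\ $\cD_{\leq -1}(X)=0$, the standard lifting arguments for filtered rings (in the style of Björk and McConnell--Robson) show that $\cD(X)$ is noetherian and that $\operatorname{l.gldim}\cD(X)\leq\operatorname{gldim}\gr\cD(X)=2d$. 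For the Rees ring, observe that $\sR\cD(X)/\hbar\cong\gr\cD(X)$ and $\sR\cD(X)[\hbar^{-1}]\cong\cD(X)[\hbar^{\pm1}]$. Moreover, $\sR\cD(X)$ carries the positive filtration by degree, whose associated graded is $\gr\cD(X)[\hbar]$; this is noetherian and regular of global dimension $2d+1$. The same lifting theorem therefore gives noetherianity and $\operatorname{l.gldim}\sR\cD(X)\leq2d+1$.

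For general smooth affinoid $X$ of pure dimension $d$, we proceed differently, since $\cT(X)$ need not be free, although it is a projective $A$-module of rank $d$. The PBW theorem for the Lie--Rinehart algebra $\cT(X)$, which is projective over $A$, still gives $\gr\cD(X)\cong\Sym_A\cT(X)$. This ring is noetherian because $\cT(X)$ is finitely generated, and it is regular because it is smooth over the regular ring $A$ (it is the coordinate ring of the cotangent bundle). Its Krull dimension is $2d$: the fibres have dimension $d$, and every maximal ideal of $A$ has height $d$ since $X$ is equidimensional and affinoid algebras are Jacobson with equicodimensional maximal ideals. Hence the global dimension of $\gr\cD(X)$ is $2d$, and the same lifting argument as above applies. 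This avoids gluing entirely.

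The main obstacle is justifying the global-dimension computation for $\Sym_A\cT(X)$, in particular the claim that all maximal ideals have height exactly $2d$ rather than merely bounded by it. I would first reduce this to a local computation: a regular noetherian ring has global dimension equal to the supremum of the heights of its maximal ideals, and after localizing at maximal ideals of $A$ the ring becomes $A_{\fr m}[\xi_1,\dots,\xi_d]$, whose maximal ideals have height at most $\dim A_{\fr m}+d=2d$. For the global dimension of $\sR\cD(X)$ via the degree filtration, I would check that this filtration is Zariskian. Because it is positive and $\sR\cD(X)$ is graded, this holds automatically, so the filtered-to-graded comparison for global dimension applies directly.
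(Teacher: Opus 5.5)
Your argument is the paper's route, with one step misstated. As in the paper, PBW gives $\gr\cD(X)\cong\Sym_A\cT(X)$, regular noetherian of dimension $2d$, and the positive order filtration lifts noetherianity and the global-dimension bound to $\cD(X)$. For $\sR\cD(X)$ you pass to an auxiliary positive filtration with associated graded $\gr\cD(X)[\hbar]$, regular of dimension $2d+1$.

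The misstated step is the choice of that auxiliary filtration. If ``filtration by degree'' means the $\hbar$-degree of the Rees grading, $G_m=\bigoplus_{n\leq m}\cD_{\leq n}(X)\hbar^n$, then its associated graded is $\sR\cD(X)$ itself, since a graded ring is its own associated graded for its degree filtration. That ring is noncommutative, so the lifting theorem gives nothing. What you need, and what the paper uses, is the filtration by the number of vector-field factors, with $A[\hbar]$ in degree zero:
\[
G_m\sR\cD(X)=\bigoplus_{n\geq0}\cD_{\leq\min(m,n)}(X)\hbar^n .
\]
Its associated graded is $\bigoplus_{m}\bigoplus_{n\geq m}\gr_m\cD(X)\hbar^n\cong\gr\cD(X)[\hbar]\cong\Sym_{A[\hbar]}(\cT(X)\otimes_AA[\hbar])$, which is what you claimed. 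With this filtration the argument goes through. Positivity together with noetherianity of the associated graded already makes it Zariskian; the grading of $\sR\cD(X)$ plays no role there.

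Two smaller remarks. The ``main obstacle'' you identify is not one: only the upper bound $\leq 2d$ on heights of maximal ideals of $\Sym_A\cT(X)$ is needed, and your localisation argument supplies it. The chart case in your first paragraph is superseded by your general argument and can be dropped.
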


\begin{proof}
  The PBW theorem gives
  \begin{equation*}
    \gr\cD(X)
    \cong
    \Sym_A\cT(X).
  \end{equation*}
  Since $A$ is regular of dimension $d$ and $\cT(X)$ is a projective
  $A$-module of rank $d$, the algebra on the right is regular noetherian of
  dimension $2d$. The order filtration is positive and Zariskian, so the
  filtered--graded comparison theorem shows that $\cD(X)$ is noetherian and
  has left global dimension at most $2d$.

  For the Rees algebra, use the auxiliary filtration by the number of
  vector-field factors, with $A[\hbar]$ in degree zero. The PBW theorem
  identifies its associated graded algebra with
  \begin{equation*}
    \Sym_{A[\hbar]}
    \left(\cT(X)\otimes_AA[\hbar]\right),
  \end{equation*}
  which is regular noetherian of dimension $2d+1$. The auxiliary filtration
  is again positive and Zariskian. A second application of filtered--graded
  comparison proves the assertion for $\sR\cD(X)$.
\end{proof}

\begin{prop}\label{prop:coherent-D-is-perfect-recII}
  Every object of $D_{\coh}^{b}(\cD)$ is locally perfect, and every object
  of
  \begin{equation*}
    D_{F,\coh}^{b}(\cD)
    =
    D_{\coh,\mathrm{gr}}^{b}(\sR\cD)
  \end{equation*}
  is locally perfect as an object of $D_{\gr}\left(\sR\cD\right)$.
\end{prop}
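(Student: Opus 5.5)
The plan is to reduce to an affinoid chart and combine Lemma~\ref{lem:D-finite-global-dimension} with a Cartan--Kiehl (Theorem A/B) type statement for coherent $\cD$- and $\sR\cD$-modules. Since local perfectness is local, we may assume $X=\Sp A$ is affinoid, smooth of pure dimension $d$, and admits an étale map to $\bT^d$. The first step is to check that the global sections functor induces an equivalence between coherent sheaves of $\cD$-modules (respectively graded coherent $\sR\cD$-modules) on $X$ and finitely generated $\cD(X)$-modules (respectively finitely generated graded $\sR\cD(X)$-modules), with vanishing higher cohomology. For $\cD$ this is obtained by writing $\cD=\varinjlim_n\cD_{\leq n}$, where each $\cD_{\leq n}$ is a finite free $\cO$-module by Lemma~\ref{lem:D-local-description}. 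A coherent module $\cal N$ has a filtration by $\cO$-coherent pieces locally, and Kiehl's theorem applies to these pieces. Cohomology of affinoids commutes with filtered colimits, which gives acyclicity and the equivalence $\cal N\cong\cD\otimes_{\cD(X)}\cal N(X)$. For $\sR\cD$ we argue in the same way degreewise: the degree-$p$ piece of $\sR\cD$ is $\cD_{\leq p}$, which is $\cO$-coherent.

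Next we treat the case of a single coherent module $\cal N$, or a coherent graded $\sR\cD$-module. By the first step, $M=\cal N(X)$ is finitely generated over a noetherian ring, using Lemma~\ref{lem:D-finite-global-dimension}. We therefore choose a resolution of $M$ by finite free (graded free, i.e.\ sums of shifts $\sR\cD(X)(j)$) modules. Truncating after $2d$ (respectively $2d+1$) steps, the global dimension bound makes the last syzygy projective. This syzygy is finitely generated, so it is a direct summand of a finite free module. In the graded case, a graded module that is projective as an ungraded module is graded projective. Localizing via $\cD\otimes_{\cD(X)}-$, which is exact on finitely generated modules because of the equivalence in the first step (flatness of $\cD(Y)$ over $\cD(X)$ for affinoid subdomains $Y$ will be needed here), yields a finite resolution of $\cal N$ by finitely generated projective sheaves.

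For a bounded complex with coherent cohomology, we induct on the number of nonzero cohomology sheaves using truncation triangles. Locally perfect objects on a fixed affinoid form a triangulated subcategory: cones of maps between bounded complexes of finitely generated projectives are again such complexes, and morphisms in the derived category between them are computed by honest maps, since the terms are projective and the first step gives acyclicity. The only care needed is to work on a common affinoid neighbourhood. Both statements of the proposition follow, the filtered one via $D^b_{F,\coh}(\cD)=D^b_{\coh,\gr}(\sR\cD)$.

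I expect the main obstacle to be the first step, namely Theorems A and B for $\cD$ and $\sR\cD$, together with the flatness of restriction $\cD(X)\to\cD(Y)$. Kiehl's theorem gives acyclicity of each $\cD_{\leq n}$ by Lemma~\ref{lem:D-local-description}. The harder point is that a coherent $\cD$-module is globally generated on an affinoid. For this I would try to reduce to the Rees side: a coherent graded $\sR\cD$-module has each graded piece $\cO$-coherent, which should follow from a local presentation whose graded pieces are finite over $\cO$. Kiehl then gives global generation degreewise. Flatness of $\cD(Y)$ over $\cD(X)$ follows from $\cD(Y)=\cO(Y)\otimes_A\cD(X)$ together with flatness of $\cO(Y)$ over $A$.
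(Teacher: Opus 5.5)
Your proposal is correct and follows the same skeleton as the paper's proof. Both reduce to a small affinoid chart and realise the sheaf as the localisation of a finitely generated module over the noetherian ring $\cD(X)$ or $\sR\cD(X)$. Both truncate a finite free resolution at the $2d$-th (respectively $(2d+1)$-st) syzygy using Lemma~\ref{lem:D-finite-global-dimension}, upgrade ungraded projectivity to graded projectivity, localise using $\cD(V)\cong\cO(V)\otimes_A\cD(X)$ and flatness of $A\to\cO(V)$, and finish with truncation triangles.

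The one genuine difference is how you obtain the finitely generated global module.

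\emph{Your route versus the paper's.} You set up a Cartan--Kiehl Theorem A/B on a fixed chart. The paper instead shrinks $X$ until $\cN$ has a presentation $\cD^{a}\to\cD^{b}\to\cN\to0$ by finite free modules. The presentation matrix has entries in $\cD(X)$, so $\cN$ is by construction the localisation of the cokernel $M$ of that matrix over $\cD(X)$. Neither global generation nor Kiehl's theorem is needed, so the ``main obstacle'' you anticipate can be avoided entirely.

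\emph{A caveat on the ungraded case.} As written, your ungraded Theorem A is not justified on an arbitrary chart. A good filtration with $\cO$-coherent pieces exists only locally, so you cannot apply Kiehl on all of $X$ without first shrinking. Shrinking is allowed because the statement is local, but once you have shrunk, the presentation alone already suffices.

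\emph{The graded case.} On the Rees side your argument does work globally on the chart:
\begin{itemize}
\item graded pieces of a coherent graded $\sR\cD$-module are $\cO$-coherent, because the graded pieces of shifts of $\sR\cD$ are the finite free $\cO$-modules $\cD_{\leq q}$;
\item degrees are bounded below, and generators have bounded degree, on the quasi-compact $X$;
\item $\sR\cD(V)\otimes_{\sR\cD(X)}\cN(X)\cong\cO(V)\otimes_A\cN(X)$ recovers $\cN$ degreewise by Kiehl.
\end{itemize}

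\emph{What your route buys.} You get acyclicity of $\cD$ and of its locally projective direct summands on affinoids, since $\cD=\varinjlim_n\cD_{\leq n}$. You use this correctly to represent morphisms in the derived category by honest chain maps on a common affinoid. That makes the truncation-triangle step rigorous, a point the paper leaves implicit.
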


\begin{proof}
  The assertion is local on $X$, so let $X=\Sp A$ be smooth of pure
  dimension $d$. If $V=\Sp B$ is an affinoid subdomain of $X$, PBW and
  flatness of $A\to B$ give
  \begin{equation*}
    \cD(V)
    \cong
    B\otimes_A\cD(X),
    \qquad
    \sR\cD(V)
    \cong
    B\otimes_A\sR\cD(X),
  \end{equation*}
  and these restriction rings are flat as right modules over the
  corresponding rings on $X$. After shrinking, a coherent $\cD$-module
  admits a presentation by finite free $\cD$-modules and is obtained by
  localisation from a finitely presented $\cD(X)$-module. Noetherianity
  allows one to continue this presentation to a resolution by finitely
  generated free modules. By Lemma~\ref{lem:D-finite-global-dimension}, the
  $2d$-th syzygy is finitely generated projective, and localisation gives a
  bounded locally projective resolution of the original sheaf.
  The same argument in the graded category, using homogeneous generators
  and grading shifts, shows that the $(2d+1)$-st syzygy is finitely
  generated and projective after forgetting the grading. It is therefore
  graded-projective: the degree-zero component of an ungraded splitting
  of a finite homogeneous free surjection is a graded splitting.
  Thus every coherent graded $\sR\cD$-module has a bounded
  resolution by finitely generated locally projective graded modules. The
  assertion for bounded complexes follows from truncation triangles.
\end{proof}

\subsection{Filtered reconstruction and full faithfulness}

\begin{thm}\label{thm:filtered-reconstruction-recII}
  For every $\cal N^\bullet\in D_{F,\coh}^{b}(\cD)$, the canonical morphism
  \begin{equation*}
    \rho_{\cal N^\bullet}^{F}\colon
    \cal N^\bullet
    \isomap
    \Rec_{F}\left(\Sol_{F}(\cal N^\bullet)\right)
  \end{equation*}
  is an isomorphism.
\end{thm}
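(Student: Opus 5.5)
The plan is the standard Morita-type dévissage: reduce to $\cal N^\bullet=\cD$ (and its grading shifts), where $\rho^F_{\cD}$ is Theorem~\ref{thm:rho-iso}.

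First, the question is local on $X$. The functors $\Sol_F$ and $\Rec_F$ are built from $\nu^{-1}$, $R\nu_*$ and derived internal Hom, so they commute with restriction to affinoid subdomains $V\subseteq X$, using $\nu^{-1}(V)=V_{\proet}$. The morphism $\rho^F$ is also compatible with such restrictions. Whether $\rho^F_{\cal N^\bullet}$ is an isomorphism can therefore be checked on an affinoid cover. By Proposition~\ref{prop:coherent-D-is-perfect-recII} we may assume $X=\Sp A$ is a toric affinoid and that, under the Rees equivalence, $\sR\cal N^\bullet$ is represented by a bounded complex of finitely generated projective graded $\sR\cD$-modules.

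Next comes the dévissage. Both $\Sol_F$ and $\Rec_F$ are triangulated functors, so $\Rec_F\circ\Sol_F$ is a covariant triangulated functor and $\rho^F$ is a natural transformation of triangulated functors. The full subcategory of objects on which $\rho^F$ is an isomorphism is therefore triangulated and closed under direct summands. Stupid truncations of a bounded complex give triangles whose terms are single finitely generated graded-projective modules. Each of these is a summand of a finite sum of shifted free modules $\sR\cD(m)$, which correspond to $\cD$ with filtration shifted by $m$. It thus suffices to treat $\cal N=\cD(m)$.

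For this base case, $\Sol_F(\cD(m))=R\shHom^F_{\nu^{-1}\cD}(\nu^{-1}\cD(m),\OB_{\pdR})$. This equals $\OB_{\pdR}(-m)$, the bimodule $\OB_{\pdR}$ with filtration shifted by $-m$, because $\nu^{-1}\cD$ is free of rank one over itself and the filtered Hom out of a shifted free module is the shifted target. Then
\[
\Rec_F\bigl(\Sol_F(\cD(m))\bigr)\cong R\nu_*R\shHom^F_{\bB_{\pdR}}\bigl(\OB_{\pdR}(-m),\OB_{\pdR}\bigr)\cong \Bigl(R\nu_*R\shHom^F_{\bB_{\pdR}}\bigl(\OB_{\pdR},\OB_{\pdR}\bigr)\Bigr)(m).
\]
One must check that under these identifications $\rho^F_{\cD(m)}$ is the $m$-shift of the map $\rho$ of Theorem~\ref{thm:rho-iso}. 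This holds because the adjunction unit sends $1\in\cD$ to the identity endomorphism and is $\cD$-linear, so it is given by $P\mapsto(\text{action of }P)$ on $\OB_{\pdR}$. That map is an isomorphism by Theorem~\ref{thm:rho-iso}, or in Rees language by Corollary~\ref{cor:rho-rees-iso}. The left $\cD$-module structure on the target, coming from the right $\nu^{-1}\cD$-action by precomposition, matches left multiplication on $\cD$.

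The main obstacle is bookkeeping rather than new mathematics. One point is making sure the filtered unit of \eqref{eq:filtered-Sol-Rec-adjunction} really is the action map, including the identification $R\shHom^F_{\nu^{-1}\cD}(\nu^{-1}\cD,\OB_{\pdR})\cong\OB_{\pdR}$ in the filtered derived category. Another is checking that the dévissage is legitimate in the quasi-abelian setting. For the latter I would pass entirely to the graded Rees side via Lemma~\ref{lem:rees-functoriality-recII}. There the categories are Grothendieck abelian and the triangulated and summand closure arguments are standard. I would also note that locality requires $R\nu_*$ to commute with restriction to open subsets, which is immediate.
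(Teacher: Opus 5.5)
Your proposal is correct and follows the paper's own proof. The paper also localises, and it also uses that the objects where $\rho^F$ is an isomorphism form a thick triangulated subcategory of $D_{\mathrm{gr}}(\sR\cD)$ stable under grading shifts. It checks the generator $\sR\cD$ via Corollary~\ref{cor:rho-rees-iso} and concludes with Proposition~\ref{prop:coherent-D-is-perfect-recII}; your stupid-truncation d\'evissage and your identification of the unit with the action map only spell out steps the paper leaves implicit.
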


\begin{proof}
  Restriction to open subspaces commutes with the functors above and with
  the reconstruction morphism, so the assertion is local on $X$. On a
  sufficiently small affinoid open, the objects $\cal N^\bullet$ for which
  $\rho_{\cal N^\bullet}^{F}$ is an isomorphism form a thick triangulated
  subcategory of $D_{\mathrm{gr}}(\sR\cD)$ that is stable under grading
  shifts. By Corollary~\ref{cor:rho-rees-iso}, this subcategory contains
  $\sR\cD$, and hence it contains every locally perfect graded complex.
  The result now follows from
  Proposition~\ref{prop:coherent-D-is-perfect-recII}.
\end{proof}

\begin{cor}\label{cor:fullyfaithful-filtered-recII}
  The filtered solution functor induces a fully faithful embedding
  \begin{equation*}
    \Sol_{F}\colon
    D_{F,\coh}^{b}(\cD)^{\op}
    \hookrightarrow
    D_F(\bB_{\pdR}).
  \end{equation*}
\end{cor}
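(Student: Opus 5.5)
The plan is to deduce full faithfulness formally from the adjunction \eqref{eq:filtered-Sol-Rec-adjunction} together with Theorem~\ref{thm:filtered-reconstruction-recII}. Let $\cal N^\bullet,\cal M^\bullet\in D_{F,\coh}^{b}(\cD)$. Since $\Sol_F$ is contravariant, the claim is that the induced map
\begin{equation*}
  \Hom_{D_F(\cD)}\left(\cal M^\bullet,\cal N^\bullet\right)
  \to
  \Hom_{D_F(\bB_{\pdR})}\left(\Sol_F(\cal N^\bullet),\Sol_F(\cal M^\bullet)\right)
\end{equation*}
is bijective.

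First, apply $H^0$ to \eqref{eq:filtered-Sol-Rec-adjunction} with $\cal G^\bullet=\Sol_F(\cal N^\bullet)$, taking the $\cD$-module in the adjunction to be $\cal M^\bullet$. This gives a natural bijection
\begin{equation*}
  \Hom_{D_F(\bB_{\pdR})}\left(\Sol_F(\cal N^\bullet),\Sol_F(\cal M^\bullet)\right)
  \cong
  \Hom_{D_F(\cD)}\left(\cal M^\bullet,\Rec_F(\Sol_F(\cal N^\bullet))\right).
\end{equation*}
Second, compose with the inverse of $\rho^F_{\cal N^\bullet}$, which is an isomorphism by Theorem~\ref{thm:filtered-reconstruction-recII}. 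This identifies the right-hand side with $\Hom_{D_F(\cD)}(\cal M^\bullet,\cal N^\bullet)$. Note that only $\rho^F_{\cal N^\bullet}$ is needed to be invertible here, not $\rho^F_{\cal M^\bullet}$.

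The one point needing care is that the resulting bijection really is the map induced by $\Sol_F$, and not some other map. Take $f\colon\cal M^\bullet\to\cal N^\bullet$. By naturality of the unit $\rho^F$, the adjoint of $\Sol_F(f)$ is $\rho^F_{\cal N^\bullet}\circ f$. To check this, write the adjunction as
\begin{equation*}
  \Hom(\cal G^\bullet,\Sol_F\cal M^\bullet)\cong\Hom(\cal M^\bullet,\Rec_F\cal G^\bullet),
\end{equation*}
natural in both variables. Then take $\cal G^\bullet=\Sol_F\cal N^\bullet$ and track the identity of $\Sol_F\cal N^\bullet$ through naturality in the $\cD$-variable along $f$. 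Hence the composite map $f\mapsto(\rho^F_{\cal N^\bullet})^{-1}\circ\rho^F_{\cal N^\bullet}\circ f$ is the identity. So $\Sol_F$ induces a bijection on Hom sets, and is therefore fully faithful.

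It remains to note that $\Sol_F$ is a triangulated functor. It is a derived internal Hom out of $\nu^{-1}(-)$, and $\nu^{-1}$ is exact, so it sends distinguished triangles to distinguished triangles up to the usual sign and rotation conventions for contravariant functors.

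The main obstacle is verifying that \eqref{eq:filtered-Sol-Rec-adjunction} holds at the level of derived filtered categories and is natural in both arguments. Concretely, this is the derived $\nu^{-1}\dashv R\nu_*$ adjunction combined with derived tensor--Hom for the bimodule $\OB_{\pdR}$. Granting that, as the paper does, the argument is purely formal.
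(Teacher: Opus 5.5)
Your proposal is correct and follows the paper's proof: it combines the adjunction \eqref{eq:filtered-Sol-Rec-adjunction} with the reconstruction isomorphism of Theorem~\ref{thm:filtered-reconstruction-recII} (with the roles of $\cal M^\bullet$ and $\cal N^\bullet$ swapped). The paper states this at the level of $R\Hom$ and leaves implicit the check that the resulting bijection is the map induced by $\Sol_F$; your naturality argument supplies that check.
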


\begin{proof}
  Let $\cal M^\bullet,\cal N^\bullet\in D_{F,\coh}^{b}(\cD)$.
  By \eqref{eq:filtered-Sol-Rec-adjunction} and
  Theorem~\ref{thm:filtered-reconstruction-recII}, there are natural
  isomorphisms
  \begin{align*}
    R\Hom_{\bB_{\pdR}}^{F}
    \left(
      \Sol_{F}(\cal M^\bullet),
      \Sol_{F}(\cal N^\bullet)
    \right)
    &\cong
    R\Hom_{\cD}^{F}
    \left(
      \cal N^\bullet,
      \Rec_{F}\left(\Sol_{F}(\cal M^\bullet)\right)
    \right) \\
    &\cong
    R\Hom_{\cD}^{F}
    \left(\cal N^\bullet,\cal M^\bullet\right).
  \end{align*}
  This proves the assertion.
\end{proof}

\subsection{Implicit filtrations}

Finally, we prove variants of Theorem~\ref{thm:filtered-reconstruction-recII}
and Corollary~\ref{cor:fullyfaithful-filtered-recII} where the filtrations are implicit.

For every exhaustively filtered commutative ring $R$, taking the zeroth
filtered piece gives an exact functor
\begin{equation*}
  \Fil_R\to\Mod_{F_0R}, \quad M\mapsto F_0M.
\end{equation*}
Here exactness means that strictly exact sequences of filtered modules
are sent to exact sequences.  The same construction gives an exact functor
\begin{equation*}
  \Fil_{\mathcal R}\to\Mod_{F_0\mathcal R},
  \quad\mathcal F\mapsto F_0\mathcal F
\end{equation*}
for every exhaustively filtered sheaf of commutative rings $\mathcal R$.
In the Rees description this is the degree-zero functor, and
sheafification is computed degreewise. Thus it also induces a functor
between the derived categories. In particular, we obtain
\begin{equation}\label{eq:zeroth-filtered-piece}
  D_F(\bB_{\pdR})\to D(\bB_{\pdR}^{+}),
  \quad\mathcal F^\bullet\mapsto F_0\mathcal F^\bullet.
\end{equation}
Since $F_0$ is exact, no further derivation is required.

\begin{defn}
  The \emph{solution functor} is
  \begin{equation*}
    \Sol\colon
    D_F(\cD)^{\op}
    \to D(\bB_{\pdR}^{+}), \quad
    \cal N^\bullet\mapsto F_0\Sol_F(\cal N^\bullet).
  \end{equation*}
\end{defn}

At the level of modules, extension of scalars gives a functor
\begin{equation}\label{eq:fil-basechange}
  \Mod_{F_0R}\to\Fil_R, \quad
  N\mapsto R\otimes_{F_0R}N,
\end{equation}
where
\begin{equation*}
  F_n\left(R\otimes_{F_0R}N\right)
  :=\im\left(
    F_nR\otimes_{F_0R}N
    \to R\otimes_{F_0R}N
  \right).
\end{equation*}
This functor is left adjoint to $F_0$. Explicitly, a morphism
$N\to F_0M$ extends uniquely to an $R$-linear morphism
$R\otimes_{F_0R}N\to M$, and the extension preserves filtrations
because $F_nR\cdot F_0M\subseteq F_nM$.
Conversely, a filtered morphism restricts along $n\mapsto1\otimes n$
to a morphism $N\to F_0M$.

The functor~\eqref{eq:fil-basechange} need not be exact, even when every
$F_nR$ is flat over $F_0R$.
We therefore use derived filtered extension of scalars.
More precisely, for an exhaustively filtered sheaf of commutative rings
$\mathcal R$, define
$\mathcal R\otimes_{F_0\mathcal R}^{L}\mathcal G^\bullet$ by
\begin{equation*}
  \sR\left(
    \mathcal R\otimes_{F_0\mathcal R}^{L}\mathcal G^\bullet
  \right)
  :=\sR\mathcal R\otimes_{F_0\mathcal R}^{L}\mathcal G^\bullet,
\end{equation*}
where $\mathcal G^\bullet$ is placed in grading degree zero on the
right-hand side.

Graded extension of scalars is left adjoint to taking degree zero.
Transporting its derived adjunction through the Rees equivalence gives
\begin{equation*}
\begin{aligned}
  R\Hom_{\mathcal R}^{F}
  \left(
    \mathcal R\otimes_{F_0\mathcal R}^{L}\mathcal G^\bullet,
    \mathcal F^\bullet
  \right)
  &\cong
  R\Hom_{\sR\mathcal R}^{\gr}
  \left(
    \sR\mathcal R\otimes_{F_0\mathcal R}^{L}\mathcal G^\bullet,
    \sR\mathcal F^\bullet
  \right) \\
  &\cong
  R\Hom_{F_0\mathcal R}
  \left(\mathcal G^\bullet,F_0\mathcal F^\bullet\right).
\end{aligned}
\end{equation*}
In particular, we obtain a functor
\begin{equation}\label{eq:BpdR+-basechange}
  D(\bB_{\pdR}^{+})\to D_F(\bB_{\pdR}), \quad
  \mathcal G^\bullet\mapsto
  \bB_{\pdR}\otimes_{\bB_{\pdR}^{+}}^{L}\mathcal G^\bullet,
\end{equation}
left adjoint to~\eqref{eq:zeroth-filtered-piece}.

\begin{defn}
  The \emph{reconstruction functor} is
  \begin{equation*}
    \Rec\colon
    D(\bB_{\pdR}^{+})^{\op}
    \to D_F(\cD), \quad
    \cal G^\bullet
    \mapsto
    \Rec_F\left(
      \bB_{\pdR}\otimes_{\bB_{\pdR}^{+}}^{L}\cal G^\bullet
    \right).
  \end{equation*}
\end{defn}

Together with~\eqref{eq:filtered-Sol-Rec-adjunction}, the preceding
adjunction yields natural isomorphisms
\begin{equation}\label{eq:unfiltered-Sol-Rec-adjunction}
\begin{aligned}
  R\Hom_{\cD}^{F}
  \left(\cal N^\bullet,\Rec(\cal G^\bullet)\right)
  &\cong
  R\Hom_{\bB_{\pdR}}^{F}
  \left(
    \bB_{\pdR}\otimes_{\bB_{\pdR}^{+}}^{L}\cal G^\bullet,
    \Sol_F(\cal N^\bullet)
  \right) \\
  &\cong
  R\Hom_{\bB_{\pdR}^{+}}
  \left(\cal G^\bullet,F_0\Sol_F(\cal N^\bullet)\right) \\
  &\cong
  R\Hom_{\bB_{\pdR}^{+}}
  \left(\cal G^\bullet,\Sol(\cal N^\bullet)\right).
\end{aligned}
\end{equation}
The identity of $\Sol(\cal N^\bullet)$ corresponds under
\eqref{eq:unfiltered-Sol-Rec-adjunction} to the natural morphism
\begin{equation*}
  \rho_{\cal N^\bullet}\colon
  \cal N^\bullet\to\Rec\left(\Sol(\cal N^\bullet)\right).
\end{equation*}

\begin{lem}\label{lem:forget-fil-recover-fil}
  For every
  $\mathcal F^\bullet\in D_F(\bB_{\pdR})$, the adjunction counit
  \begin{equation*}
    \epsilon_{\mathcal F^\bullet}\colon
    \bB_{\pdR}\otimes_{\bB_{\pdR}^{+}}^{L}F_0\mathcal F^\bullet
    \isomap\mathcal F^\bullet
  \end{equation*}
  is an isomorphism.
\end{lem}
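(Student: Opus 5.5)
The plan is to translate the statement through the Rees equivalence of \S\ref{subsec:Fil}. There it becomes the assertion that the graded Rees algebra $\sR\bB_{\pdR}$ is strongly graded over its degree-zero part $\bB_{\pdR}^{+}$. Graded modules over a strongly graded ring are determined by their degree-zero part.

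By definition of derived filtered extension of scalars, $\sR(\bB_{\pdR}\otimes^{L}_{\bB_{\pdR}^{+}}F_0\cF^\bullet)=\sR\bB_{\pdR}\otimes^{L}_{\bB_{\pdR}^{+}}F_0\cF^\bullet$. Under the Rees equivalence $\epsilon_{\cF^\bullet}$ is the multiplication map
\[
\sR\bB_{\pdR}\otimes^{L}_{\bB_{\pdR}^{+}}(\sR\cF^\bullet)_0\to\sR\cF^\bullet .
\]
Both sides are computed degreewise and sheafification is degreewise. The assertion is therefore local on $X_{\proet}$, and it suffices to check it on affinoid perfectoid objects $U$ over $X_C$. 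These form a basis, and on them $F_n\bB_{\pdR}=t^{-n}\bB_{\pdR}^{+}$. For general objects we descend, or more intrinsically we use only that each $F_n\bB_{\pdR}$ is an invertible $\bB_{\pdR}^{+}$-module with $F_n\otimes F_m\isomap F_{n+m}$. This property is insensitive to replacing $t$ by a unit multiple, so it is Galois-stable.

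\textbf{Step 1: structure of the Rees ring.} Put $u:=t^{-1}\hbar\in(\sR\bB_{\pdR})_1$. Then $u^{-1}=t\hbar^{-1}$ lies in degree $-1$, since $t\in F_{-1}\bB_{\pdR}$. We get $\sR\bB_{\pdR}=\bigoplus_n t^{-n}\bB_{\pdR}^{+}\hbar^n=\bB_{\pdR}^{+}[u,u^{-1}]$, a Laurent polynomial ring with $u$ a homogeneous unit of degree $1$. In particular it is free, hence flat, over $\bB_{\pdR}^{+}$. The derived tensor product is therefore the underived one, computed termwise on any complex representing $F_0\cF^\bullet$. Note that $\hbar$ itself is not invertible; the point is that $u$ is.

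\textbf{Step 2: the counit on modules.} For a graded $\bB_{\pdR}^{+}[u^{\pm1}]$-module $M$, multiplication by $u^n$ is an isomorphism $M_0\isomap M_n$, with inverse $u^{-n}$. Hence $\bB_{\pdR}^{+}[u^{\pm1}]\otimes_{\bB_{\pdR}^{+}}M_0=\bigoplus_n u^nM_0\to M$ is an isomorphism in each degree. Applying this termwise to a complex of graded modules representing $\sR\cF^\bullet$ shows the counit is a quasi-isomorphism. Here we use that taking degree zero is exact, so no resolution of $F_0\cF^\bullet$ is needed beyond the flatness from Step 1.

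\textbf{Expected obstacle: globalisation.} I expect the main difficulty to be the sheaf-level bookkeeping: checking that the local identification $\sR\bB_{\pdR}\cong\bB_{\pdR}^{+}[u^{\pm1}]$ is compatible with the derived Rees formalism on $X_{\proet}$, including over objects not living over $X_C$, where $\bB_{\pdR}$ is defined by descent. I would handle this by observing that $\epsilon_{\cF^\bullet}$ is a globally defined morphism, so being an isomorphism may be checked on the basis of affinoid perfectoids over $X_C$, where Steps 1 and 2 apply verbatim. Alternatively one can argue intrinsically via invertibility of the graded pieces, which avoids choosing $t$ at all.
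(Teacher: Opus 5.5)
Your proposal is correct and follows essentially the paper's own argument. You identify $\sR\bB_{\pdR}$ locally with $\bB_{\pdR}^{+}[u,u^{-1}]$ for $u=t^{-1}\hbar$, use that multiplication by $u^n$ gives $H_0\isomap H_n$ for every graded module (so extension of scalars and taking degree zero are mutually inverse exact equivalences), and globalise because the multiplication map is canonical and independent of $t$. The paper also gives a preliminary underived argument ($F_nM=t^{-n}F_0M$ for filtered modules), which your Rees-level argument subsumes.
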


\begin{proof}
  First consider a commutative ring $B^{+}$, a non-unit and non-zero
  divisor $t\in B^{+}$, and $B=B^{+}[1/t]$ with the increasing
  filtration $F_nB=t^{-n}B^{+}$. If $M$ is an exhaustively filtered
  $B$-module, compatibility with multiplication by $t^{-n}$ and $t^n$
  gives, respectively,
  \begin{equation*}
    t^{-n}F_0M\subseteq F_nM,
    \qquad
    t^nF_nM\subseteq F_0M.
  \end{equation*}
  Hence $F_nM=t^{-n}F_0M$ for every $n\in\bZ$, and these
  submodules exhaust $M$.
  Since multiplication by $t$ is invertible on $M$, the submodule
  $F_0M$ is $t$-torsion-free. Consequently, multiplication induces
  an isomorphism
  \begin{equation*}
    B\otimes_{B^{+}}F_0M
    =F_0M[1/t]\isomap M.
  \end{equation*}
  It identifies the $n$th filtered pieces on both sides with
  $t^{-n}F_0M$, and is therefore strict.
  Applying this argument locally to a sheaf $\mathcal F$ of filtered $\bB_{\pdR}$-modules
  yields that the underived adjunction counit
  \begin{equation*}
    \bB_{\pdR}\otimes_{\bB_{\pdR}^{+}}F_0\mathcal F
    \isomap\mathcal F
  \end{equation*}  
  is an isomorphism.

  For the derived assertion, write the Rees ring locally as
  \begin{equation*}
    \sR B
    =\bigoplus_{n\in\bZ}t^{-n}B^{+}\hbar^n
    \cong B^{+}[u,u^{-1}],
    \qquad u=t^{-1}\hbar,\quad\deg u=1.
  \end{equation*}
  For every graded $\sR B$-module $H$, the multiplication map
  \begin{equation*}
    \sR B\otimes_{B^{+}}H_0\to H
  \end{equation*}
  is an isomorphism. Indeed, on the component of degree $n$ its
  inverse sends $x\in H_n$ to $u^n\otimes u^{-n}x$.
  Conversely, for every $B^{+}$-module $N$, the degree-zero component
  of $\sR B\otimes_{B^{+}}N$ is canonically $N$.
  Thus graded extension of scalars and taking degree zero are
  mutually inverse exact equivalences. These statements hold for
  sheaves as well: the multiplication maps are canonical, and their
  local inverses therefore glue independently of the choice of $t$.
  Passing to derived categories and using the Rees equivalence
  proves the assertion. Under these identifications the
  multiplication map is precisely the adjunction counit.
\end{proof}

\begin{thm}\label{thm:unfiltered-reconstruction-recII}
  For every $\cal N^\bullet\in D_{F,\coh}^{b}(\cD)$, the canonical morphism
  \begin{equation*}
    \rho_{\cal N^\bullet}\colon
    \cal N^\bullet
    \isomap\Rec\left(\Sol(\cal N^\bullet)\right)
  \end{equation*}
  is an isomorphism.
\end{thm}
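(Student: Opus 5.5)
We deduce Theorem~\ref{thm:unfiltered-reconstruction-recII} from its filtered counterpart, Theorem~\ref{thm:filtered-reconstruction-recII}, using Lemma~\ref{lem:forget-fil-recover-fil}. By definition,
\begin{equation*}
  \Rec\left(\Sol(\cal N^\bullet)\right)
  =
  \Rec_F\left(
    \bB_{\pdR}\otimes^{L}_{\bB_{\pdR}^{+}}F_0\Sol_F(\cal N^\bullet)
  \right).
\end{equation*}
Applying Lemma~\ref{lem:forget-fil-recover-fil} to $\mathcal F^\bullet=\Sol_F(\cal N^\bullet)\in D_F(\bB_{\pdR})$, the counit
\begin{equation*}
  \epsilon\colon
  \bB_{\pdR}\otimes^{L}_{\bB_{\pdR}^{+}}F_0\Sol_F(\cal N^\bullet)
  \to
  \Sol_F(\cal N^\bullet)
\end{equation*}
is an isomorphism. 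Hence $\Rec_F(\epsilon)$ is an isomorphism
\begin{equation*}
  \Rec_F\left(\Sol_F(\cal N^\bullet)\right)
  \isomap
  \Rec\left(\Sol(\cal N^\bullet)\right),
\end{equation*}
since $\Rec_F$ is contravariant.

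The key step is to check that $\rho_{\cal N^\bullet}=\Rec_F(\epsilon)\circ\rho^{F}_{\cal N^\bullet}$. Both sides are morphisms $\cal N^\bullet\to\Rec(\Sol(\cal N^\bullet))$, so it suffices to compare their images under the chain of adjunctions~\eqref{eq:unfiltered-Sol-Rec-adjunction}.
\begin{itemize}
  \item By definition, $\rho_{\cal N^\bullet}$ corresponds to the identity of $\Sol(\cal N^\bullet)=F_0\Sol_F(\cal N^\bullet)$.
  \item The first isomorphism in~\eqref{eq:unfiltered-Sol-Rec-adjunction} is~\eqref{eq:filtered-Sol-Rec-adjunction} applied to $\bB_{\pdR}\otimes^{L}F_0\Sol_F(\cal N^\bullet)$. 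It is natural in the $\bB_{\pdR}$-variable, so it sends $\Rec_F(\epsilon)\circ\rho^F_{\cal N^\bullet}$ to $\mathrm{id}_{\Sol_F(\cal N^\bullet)}\circ\epsilon=\epsilon$.
  \item The second isomorphism is the $(\bB_{\pdR}\otimes^{L}-)\dashv F_0$ adjunction. It sends the counit $\epsilon$ to the identity of $F_0\Sol_F(\cal N^\bullet)$.
\end{itemize}
So both morphisms correspond to the identity of $\Sol(\cal N^\bullet)$, and they agree. This is formal, using only that $\rho^{F}$ is the unit-type morphism of~\eqref{eq:filtered-Sol-Rec-adjunction} and that the identifications are natural.

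With this factorisation, $\rho_{\cal N^\bullet}$ is the composite of $\rho^F_{\cal N^\bullet}$ and $\Rec_F(\epsilon)$. The first is an isomorphism by Theorem~\ref{thm:filtered-reconstruction-recII} for $\cal N^\bullet\in D^b_{F,\coh}(\cD)$; the second is an isomorphism by the argument above. Hence $\rho_{\cal N^\bullet}$ is an isomorphism.

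The main obstacle is the bookkeeping in the compatibility check. The adjunctions must be genuine derived adjunctions, natural in both variables, and the counit of the derived $(\bB_{\pdR}\otimes^{L}-)\dashv F_0$ adjunction must be the multiplication map of Lemma~\ref{lem:forget-fil-recover-fil}. The end of that lemma's proof states exactly this identification, so I expect no genuine difficulty beyond it.
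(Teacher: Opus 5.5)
Your proposal is correct and follows the paper's proof: it applies Lemma~\ref{lem:forget-fil-recover-fil} to $\Sol_F(\cal N^\bullet)$ and checks the factorisation $\rho_{\cal N^\bullet}=\Rec_F(\epsilon)\circ\rho^{F}_{\cal N^\bullet}$. That check uses naturality of \eqref{eq:filtered-Sol-Rec-adjunction} together with the fact that the counit corresponds to the identity of $F_0\Sol_F(\cal N^\bullet)$, and the result then follows from Theorem~\ref{thm:filtered-reconstruction-recII}.
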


\begin{proof}
  Apply Lemma~\ref{lem:forget-fil-recover-fil} to
  $\mathcal F^\bullet=\Sol_F(\cal N^\bullet)$. Its counit induces an
  isomorphism
  \begin{equation*}
    \Rec_F\left(\Sol_F(\cal N^\bullet)\right)
    \isomap\Rec\left(\Sol(\cal N^\bullet)\right),
  \end{equation*}
  since $\Rec_F$ is contravariant. The construction of
  \eqref{eq:unfiltered-Sol-Rec-adjunction} from the two adjunctions
  shows that
  \begin{equation*}
    \rho_{\cal N^\bullet}
    =\Rec_F\left(\epsilon_{\Sol_F(\cal N^\bullet)}\right)
    \circ\rho_{\cal N^\bullet}^{F}.
  \end{equation*}
  Indeed, under the extension-of-scalars adjunction, the identity of
  $F_0\Sol_F(\cal N^\bullet)$ corresponds to
  $\epsilon_{\Sol_F(\cal N^\bullet)}$; naturality of
  \eqref{eq:filtered-Sol-Rec-adjunction} then gives the displayed
  equality. The first factor is an isomorphism by the lemma, and
  $\rho_{\cal N^\bullet}^{F}$ is an isomorphism by
  Theorem~\ref{thm:filtered-reconstruction-recII}.
\end{proof}

\begin{cor}\label{cor:fullyfaithful-unfiltered-recII}
  The solution functor induces a fully faithful embedding
  \begin{equation*}
    \Sol\colon
    D_{F,\coh}^{b}(\cD)^{\op}
    \hookrightarrow D(\bB_{\pdR}^{+}).
  \end{equation*}
\end{cor}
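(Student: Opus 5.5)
The plan is to mirror the proof of Corollary~\ref{cor:fullyfaithful-filtered-recII}, replacing the filtered adjunction \eqref{eq:filtered-Sol-Rec-adjunction} by its unfiltered variant \eqref{eq:unfiltered-Sol-Rec-adjunction}, and the filtered reconstruction theorem by Theorem~\ref{thm:unfiltered-reconstruction-recII}.

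Let $\cal M^\bullet,\cal N^\bullet\in D_{F,\coh}^{b}(\cD)$. Apply \eqref{eq:unfiltered-Sol-Rec-adjunction} with $\cal G^\bullet=\Sol(\cal M^\bullet)\in D(\bB_{\pdR}^{+})$. This gives a natural isomorphism
\begin{equation*}
  R\Hom_{\bB_{\pdR}^{+}}\left(\Sol(\cal M^\bullet),\Sol(\cal N^\bullet)\right)
  \cong
  R\Hom_{\cD}^{F}\left(\cal N^\bullet,\Rec(\Sol(\cal M^\bullet))\right).
\end{equation*}
Theorem~\ref{thm:unfiltered-reconstruction-recII} gives $\rho_{\cal M^\bullet}\colon\cal M^\bullet\isomap\Rec(\Sol(\cal M^\bullet))$. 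Composing with $\rho_{\cal M^\bullet}^{-1}$ identifies the right-hand side with $R\Hom_{\cD}^{F}(\cal N^\bullet,\cal M^\bullet)$. Taking $H^0$ then gives a bijection $\Hom_{D_F(\cD)}(\cal N^\bullet,\cal M^\bullet)\cong\Hom_{D(\bB_{\pdR}^{+})}(\Sol(\cal M^\bullet),\Sol(\cal N^\bullet))$, which is the contravariant full faithfulness we want.

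The one step that needs actual checking is that this composite bijection is the map $f\mapsto\Sol(f)$ induced by the functor, and not just some abstract isomorphism. I would verify this by the standard adjunction argument. By naturality of \eqref{eq:unfiltered-Sol-Rec-adjunction} in $\cal N^\bullet$, a morphism $f\colon\cal N^\bullet\to\cal M^\bullet$ composed with $\rho_{\cal M^\bullet}$ corresponds to the morphism obtained from the identity of $\Sol(\cal M^\bullet)$ by precomposing along $f$ in the second variable. That morphism is $\Sol(f)$, because the identity of $\Sol(\cal M^\bullet)$ corresponds to $\rho_{\cal M^\bullet}$ by definition. So the bijection is exactly $f\mapsto\Sol(f)$.

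Finally, $\Sol$ is a triangulated functor: it is the composite of the derived internal Hom functor $\Sol_F$ with the exact functor $F_0$ from \eqref{eq:zeroth-filtered-piece}, both of which are triangulated. Hence it is a fully faithful embedding of triangulated categories.
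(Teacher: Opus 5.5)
Your proposal is correct and follows the paper's proof: you apply the adjunction \eqref{eq:unfiltered-Sol-Rec-adjunction} with $\cal G^\bullet=\Sol(\cal M^\bullet)$, and then use Theorem~\ref{thm:unfiltered-reconstruction-recII} to replace $\Rec(\Sol(\cal M^\bullet))$ by $\cal M^\bullet$. Your extra step is also correct: using naturality in $\cal N^\bullet$ and the fact that $\rho_{\cal M^\bullet}$ is the adjunct of the identity, you check that the resulting bijection is $f\mapsto\Sol(f)$, which the paper leaves implicit.
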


\begin{proof}
  Let $\cal M^\bullet,\cal N^\bullet\in D_{F,\coh}^{b}(\cD)$.
  By~\eqref{eq:unfiltered-Sol-Rec-adjunction} and
  Theorem~\ref{thm:unfiltered-reconstruction-recII}, there are natural
  isomorphisms
  \begin{align*}
    R\Hom_{\bB_{\pdR}^{+}}
    \left(\Sol(\cal M^\bullet),\Sol(\cal N^\bullet)\right)
    &\cong
    R\Hom_{\cD}^{F}
    \left(
      \cal N^\bullet,
      \Rec\left(\Sol(\cal M^\bullet)\right)
    \right) \\
    &\cong
    R\Hom_{\cD}^{F}
    \left(\cal N^\bullet,\cal M^\bullet\right).
  \end{align*}
  This proves the assertion.
\end{proof}

\section{Application: De Rham Zariski-constructible sheaves}
\label{section:application--dR}

We write $D_{\mathrm{zc}}^b(X_{\et},\bZ_p)$ for the category of
Zariski-constructible complexes of
\cite[Definition 3.32]{BhattHansenZC}, with boundedness understood
locally on $X$.\footnote{Bhatt--Hansen denote this category by
$D_{\mathrm{zc}}^{(b)}(X,\bZ_p)$.}

By~\cite[Remark 3.38]{BhattHansenZC}, the construction with integral
adic coefficients can be carried out using the pro-\'etale topology.
We describe explicitly the realisation on the site $X_{\proet}$ used
in this paper. As explained in~\cite[Remark 3.33]{BhattHansenZC}, an
object $\cal K^\bullet\in D_{\mathrm{zc}}^b(X_{\et},\bZ_p)$ has finite-level
realisations
\begin{equation*}
  \cal K_n^\bullet
  :=\cal K^\bullet\otimes_{\bZ_p}^{L}\bZ/p^n\bZ
  \in D_{\mathrm{zc}}^b(X_{\et},\bZ/p^n\bZ).
\end{equation*}
These come with coherent identifications
\begin{equation*}
  \cal K_{n+1}^\bullet
  \otimes_{\bZ/p^{n+1}\bZ}^{L}\bZ/p^n\bZ
  \cong\cal K_n^\bullet.
\end{equation*}
This compatible system determines $\cal K^\bullet$.
Compatibility and inverse limits of complexes are understood in the corresponding derived $\infty$-categories.
This allows us to define the pro-\'etale realisation by
\begin{equation*}
  \cal K_{\proet}^\bullet
  :=R\varprojlim\nu^{-1}\cal K_n^\bullet
  \in D(X_{\proet},\widehat{\bZ}_p),
\end{equation*}
where
$\widehat{\bZ}_p:=\varprojlim\nu^{-1}\bZ/p^n\bZ$.
Each $\nu^{-1}\cal K_n^\bullet$ is a complex of sheaves of
$\widehat{\bZ}_p$-modules through the reduction map
$\widehat{\bZ}_p\to\nu^{-1}\bZ/p^n\bZ$, and the derived inverse
limit is taken in $D(X_{\proet},\widehat{\bZ}_p)$.
This gives a triangulated functor
\begin{equation*}
  D_{\mathrm{zc}}^b(X_{\et},\bZ_p)
  \to D(X_{\proet},\widehat{\bZ}_p),
  \qquad\cal K^\bullet\mapsto\cal K_{\proet}^\bullet.
\end{equation*}

We use the notion of a lisse $\bZ_p$-sheaf from
\cite[Definition 8.1]{Sch13pAdicHodge}, with the additional
convention that such sheaves are torsion-free.
For a lisse $\bZ_p$-sheaf $\bL=\bL_\bullet$ on $X_{\et}$,
\cite[Proposition 8.2]{Sch13pAdicHodge} shows that its
pro-\'etale realisation is concentrated in degree zero
and identifies it with the usual sheaf
\[
  \bL_{\proet}
  =\widehat{\bL}
  :=\varprojlim\nu^{-1}\bL_n.
\]

We have a triangulated functor
\begin{equation*}
\begin{aligned}
  \SolB_{\dR}^{+}\colon
  D_{F,\coh}^{b}(\cD)^{\op}
  &\to D(\bB_{\dR}^{+}), \\
  \cal M^\bullet
  &\mapsto
  F_0R\shHom_{\nu^{-1}\cD}^{F}
  \left(\nu^{-1}\cal M^\bullet,\OB_{\dR}\right).
\end{aligned}
\end{equation*}

\begin{lem}\label{lem:SolBdR--Sol--comparison}
  For every $\mathcal{M}^{\bullet}\in D_{F,\coh}^{b}(\mathcal{D})$, we have the canonical isomorphism
  \begin{equation}\label{eq:SolBdR--Sol--comparison}
    \SolB_{\dR}^{+}(\cal M^\bullet)
    \widehat\otimes_{\bB_{\dR}^{+}}^{L}\bB_{\pdR}
    \isomap\Sol_{F}(\cal M^\bullet)
  \end{equation}
  in $D_{F}\left(\bB_{\pdR}\right)$.
\end{lem}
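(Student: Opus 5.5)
The plan is to reduce, exactly as in the proof of Theorem~\ref{thm:filtered-reconstruction-recII}, to the case $\cal M^\bullet=\cD$ (in the Rees picture, $\sR\cD$ together with its grading shifts). First we construct the morphism \eqref{eq:SolBdR--Sol--comparison} canonically. The inclusion $\OB_{\dR}\to\OB_{\pdR}=\bB_{\pdR}\widehat\otimes_{\bB_{\dR}}\OB_{\dR}$ is a filtered $\nu^{-1}\cD$-$\bB_{\dR}$-bimodule map, so it induces
\[
  \Sol_F^{\dR}(\cal M^\bullet):=R\shHom^F_{\nu^{-1}\cD}(\nu^{-1}\cal M^\bullet,\OB_{\dR})\to\Sol_F(\cal M^\bullet),
\]
and $\bB_{\pdR}$-linearity of the target extends this along $\bB_{\dR}\to\bB_{\pdR}$. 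The argument of Lemma~\ref{lem:forget-fil-recover-fil}, applied with $B^+=\bB_{\dR}^+$ and $B=\bB_{\dR}$, identifies $\Sol_F^{\dR}(\cal M^\bullet)$ with $\bB_{\dR}\otimes^L_{\bB_{\dR}^+}\SolB_{\dR}^+(\cal M^\bullet)$. Composing these gives the map. We interpret the completed base change $\widehat\otimes^L_{\bB_{\dR}^+}\bB_{\pdR}$ as filtered base change to $\bB_{\pdR}$, with completion applied where necessary.

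Both sides are triangulated in $\cal M^\bullet$ and compatible with restriction, and completed tensor products commute with finite direct sums and shifts. The class of $\cal M^\bullet$ for which the map is an isomorphism is therefore a thick subcategory stable under grading shifts. By Proposition~\ref{prop:coherent-D-is-perfect-recII}, it suffices to check the statement locally for $\cal M^\bullet=\cD$. Here a subtlety arises: the left side is a completed tensor product, and commutation with cones needs the completed base change $\bB_{\dR}\to\bB_{\pdR}$ to be exact on these objects. Lemma~\ref{lem:pdR-flat-dR} supplies this, because $\bB_{\pdR}\cong\bigoplus_m\bB_{\dR}(\log t)^m$ is filtered free.

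For $\cal M=\cD$, the left side is $F_0\OB_{\dR}\,\widehat\otimes^L\,\bB_{\pdR}$, and after $\bB_{\dR}$-extension this is $\bB_{\pdR}\widehat\otimes_{\bB_{\dR}}\OB_{\dR}$. The right side is $\OB_{\pdR}$, which is this object by definition. The map is the identity, provided the derived completed tensor product is underived. I would check this locally on $\widetilde X_C$ using Proposition~\ref{prop:localdescription-of-OBdR+-sheaves-recII}. There $\OB_{\dR}\cong\bB_{\dR}\widehat\otimes k_0[T]$, so the tensor product becomes $\bB_{\pdR}\widehat\otimes k_0[T]$, and on filtered pieces it is a completed direct sum of copies of $F_j\bB_{\pdR}$. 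Higher Tor vanishes by the freeness in Lemma~\ref{lem:pdR-flat-dR}, and Corollary~\ref{prop:localdescription-of-OBpdR-sheaves-recII} identifies the result with $\OB_{\pdR}$.

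I expect the main obstacle to be making the derived completed tensor product well defined and exact in the filtered derived category, so that the dévissage step is legitimate. My first approach would be to prove that $\bB_{\pdR}$ is a completed direct sum of filtered shifts of $\bB_{\dR}$. Completed base change is then completion of a direct sum, and Lemma~\ref{lem:completion-ghostadjunction-sheaves}, together with the Mittag--Leffler argument from its proof, should give that it preserves strict exact sequences of complete objects. If that fails, I would instead define the left side as the composite of uncompleted base change followed by the exact completion functor.
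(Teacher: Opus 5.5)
Your proposal is correct and follows essentially the paper's route: reduce via Proposition~\ref{prop:coherent-D-is-perfect-recII} to $\cD$ and its filtration shifts. Then combine Lemma~\ref{lem:forget-fil-recover-fil} for $\bB_{\dR}^{+}\subset\bB_{\dR}$, associativity of derived extension of scalars, the flatness in Lemma~\ref{lem:pdR-flat-dR}, and exactness of separated completion to identify the left side with $\bB_{\pdR}\widehat\otimes_{\bB_{\dR}}\OB_{\dR}=\OB_{\pdR}$. Your explicit construction of the comparison map and your extra local check on $\widetilde X_C$ are harmless refinements of what the paper leaves implicit.
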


\begin{proof}
  By Proposition~\ref{prop:coherent-D-is-perfect-recII},
  $\cal M^\bullet$ is locally perfect in the filtered derived
  category. Both sides of~\eqref{eq:SolBdR--Sol--comparison}
  are compatible with cohomological shifts, distinguished triangles,
  and direct summands. It therefore suffices to check the assertion
  for $\cal M^\bullet=\cD$ and its filtration shifts.

  The proof of Lemma~\ref{lem:forget-fil-recover-fil} applies equally
  to $\bB_{\dR}^{+}\subset\bB_{\dR}$. In particular, multiplication
  induces a canonical isomorphism
  \[
    \bB_{\dR}\otimes_{\bB_{\dR}^{+}}^{L}
    F_0\OB_{\dR}
    \isomap\OB_{\dR}
  \]
  in $D_F(\bB_{\dR})$.
  Associativity of derived extension of scalars consequently gives
  \[
    \begin{aligned}
      F_0\OB_{\dR}
      \widehat{\otimes}_{\bB_{\dR}^{+}}^{L}\bB_{\pdR}
      &\cong
      \left(
        \bB_{\dR}\otimes_{\bB_{\dR}^{+}}^{L}
        F_0\OB_{\dR}
      \right)
      \widehat{\otimes}_{\bB_{\dR}}^{L}\bB_{\pdR}\\
      &\cong
      \OB_{\dR}
      \widehat{\otimes}_{\bB_{\dR}}^{L}\bB_{\pdR}\\
      &=
      \OB_{\pdR}.
    \end{aligned}
  \]
  The last isomorphism follows from
  Lemma~\ref{lem:pdR-flat-dR}, exactness of separated completion,
  and the definition of $\OB_{\pdR}$.
  This shows that~\eqref{eq:SolBdR--Sol--comparison} is an isomorphism
  for $\cal M^\bullet=\cD$.
  Applying the same argument to every filtration shift of
  $\OB_{\dR}$ proves the assertion for every filtration shift
  of $\cD$.
\end{proof}

\begin{defn}\label{def:deRham-zc-complex}
  A complex $\cal K^\bullet\in D_{\mathrm{zc}}^b(X_{\et},\bZ_p)$ is
  \emph{de Rham} if there exist
  $\cal M^\bullet\in D_{F,\coh}^b(\cD)$ and an isomorphism
  \begin{equation}\label{eq:deRham-zc-period-identification}
    \SolB_{\dR}^{+}(\cal M^\bullet)
    \widehat{\otimes}_{\bB_{\dR}^{+}}^{L}
    \bB_{\dR}
    \cong
    \cal K_{\proet}^\bullet
    \widehat\otimes_{\widehat{\bZ}_p}^{L}\bB_{\dR}
  \end{equation}
  in $D_F(\bB_{\dR})$.
\end{defn}

Theorem~\ref{thm:comparison-Scholze-deRham} below shows that
Definition~\ref{def:deRham-zc-complex} recovers the classical notion for
local systems. Its proof uses our main Theorem~\ref{introthm:main}.

First, we need a few lemmata.

\begin{lem}\label{lem:nilpotentoperator}
  There exists a filtered $\bB_{\dR}$-linear derivation $\delta\colon\bB_{\pdR}\to\bB_{\pdR}$
  which is locally given by $d/(d\log t)$ for any choice of Fontaine's $t$.
\end{lem}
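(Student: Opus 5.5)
The plan is to define $\delta$ first on sections over affinoid perfectoid objects $U$ over $X_C$, where $\bB_{\pdR}(U)=\bB_{\dR}(U)[\log t]$ is a polynomial ring in the formal variable $\ell=\log t$. On such $U$ we set
\[
  \delta\Bigl(\sum_{i}b_i\ell^i\Bigr):=\sum_{i\geq1}i\,b_i\ell^{i-1},
  \qquad b_i\in\bB_{\dR}(U).
\]
This is the formal derivative $d/d\ell$. It is $\bB_{\dR}(U)$-linear and satisfies the Leibniz rule. It preserves the filtration $F_n=t^{-n}\bB_{\pdR}^+(U)$, because $\ell$ has filtration degree zero and $F_n\bB_{\pdR}(U)=\bigoplus_i F_n\bB_{\dR}(U)\ell^i$. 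It is compatible with restriction maps, since these act only on the coefficients $b_i$.

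The main point is independence of the choice of $t$. Any other choice is $t'=ct$ with $c\in\bZ_p^\times$, coming from a different compatible system $\epsilon'=\epsilon^{c}$. The formal logarithms are then related by $\log t'=\log t+\log c$ with $\log c\in\bQ_p$ a constant; this is the same translation invariance used in the proof of Proposition~\ref{prop:usolid-coho-2}. Since $\ell'=\ell+\log c$, the chain rule for formal polynomial derivations gives $d/d\ell'=d/d\ell$ on $\bB_{\dR}(U)[\ell]=\bB_{\dR}(U)[\ell']$. Hence the local operators glue, and $\delta$ is defined over $X_C$.

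Next we check compatibility with the Galois action so that $\delta$ descends to general affinoid perfectoids in $X_{\proet}$, where $\bB_{\pdR}$ was defined by descent. For $\sigma\in G_k$ we have $\sigma(\ell)=\ell+\log\chi_{\cycl}(\sigma)$, again a constant translation. So $\sigma\circ\delta=\delta\circ\sigma$ by the same chain-rule argument, together with $\sigma$-semilinearity on the coefficients in $\bB_{\dR}$. Therefore $\delta$ commutes with the descent data and defines an endomorphism of the sheaf $\bB_{\pdR}$. It is filtered and $\bB_{\dR}$-linear because both properties are checked on sections over $X_C$.

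The only step needing care is making precise the descent along $X_C\to X$ that the paper used to define $\bB_{\pdR}$. I would handle it by noting that a $G_k$-equivariant morphism of sheaves on $X_{\proet}/X_C$ descends. In this setting equivariance is exactly the translation-invariance computation above, so I expect no genuine obstacle. Finally, $\delta$ is locally nilpotent on $\bB_{\pdR}$: it lowers the $\ell$-degree of every polynomial. This is presumably why the lemma is labelled "nilpotentoperator".
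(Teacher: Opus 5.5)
Your proposal is correct and is essentially the paper's argument. In both, $\delta$ is the formal derivative $d/d\ell$, and any two local choices of $\ell=\log t$ differ by an element of $\bB_{\dR}$ (the paper's $\log a_{ij}$ with $a_{ij}\in\widehat{\bZ}_p^\times(U_{ij})$, your $\log c$ or $\log\chi_{\cycl}(\sigma)$), so the $\bB_{\dR}$-linear derivations agree and glue; the filtration is preserved because $\ell$ has degree zero.

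The only differences are cosmetic. You descend along $X_C\to X$ via $G_k$-equivariance, whereas the paper glues over an arbitrary pro-\'etale cover trivialising $\widehat{\bZ}_p(1)$. Also, on a disconnected $U$ the ratio $c$ is a section of $\widehat{\bZ}_p^\times(U)$ rather than a constant in $\bZ_p^\times$. This changes nothing, since only $\log c\in\bB_{\dR}(U)$ is used.
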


\begin{proof}
We construct $\delta$ by descent.

Choose a pro-\'etale cover $\{U_i\to X\}$ on which
$\widehat{\bZ}_p(1)$ admits bases $\epsilon_i$, and put
$t_i=\log[\epsilon_i]$. On $U_{ij}:=U_i\times_XU_j$, write
\[
  \epsilon_j=\epsilon_i^{a_{ij}},
  \qquad
  t_j=a_{ij}t_i,
  \qquad
  a_{ij}\in\widehat{\bZ}_p^\times(U_{ij}).
\]
The corresponding local presentations
$\bB_{\pdR}|_{U_i}=\bB_{\dR}|_{U_i}[\ell_i]$,
where $\ell_i=\log t_i$ is a formal variable, are glued by
\[
  \ell_j=\ell_i+\log(a_{ij}).
\]
Let $\delta_i$ be the $\bB_{\dR}|_{U_i}$-linear derivation determined
by $\delta_i(\ell_i)=1$. Since
$\log(a_{ij})\in\bB_{\dR}(U_{ij})$, we have
\[
  \delta_i(\ell_j)
  =\delta_i\bigl(\ell_i+\log(a_{ij})\bigr)
  =1.
\]
Thus the $\delta_i$ agree on overlaps and define $\delta$.
The same calculation shows that $\delta$ is independent of the
chosen local bases. Since $\ell_i$ has filtration degree zero,
$\delta$ preserves the filtration.

\end{proof}

\begin{lem}\label{lem:nilpotentoperator-ses}
  We have the strict short exact sequence
  \begin{equation*}
    0\to\OB_{\dR}\to\OB_{\pdR}\stackrel{\delta}{\to}\OB_{\pdR}\to0
  \end{equation*}
  of sheaves of filtered $\OB_{\dR}$-modules.
\end{lem}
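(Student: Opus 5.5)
First I extend the derivation $\delta$ of Lemma~\ref{lem:nilpotentoperator} to $\OB_{\pdR}=\bB_{\pdR}\widehat{\otimes}_{\bB_{\dR}}\OB_{\dR}$ as $\delta\widehat{\otimes}\mathrm{id}$. Since $\delta$ is $\bB_{\dR}$-linear and preserves filtrations, the uncompleted tensor product $\bB_{\pdR}\otimes_{\bB_{\dR}}\OB_{\dR}$ carries a filtered map $\delta\otimes\mathrm{id}$. This map is bounded of degree zero, so it extends continuously to the separated completion and then to the sheafification. The result is $\OB_{\dR}$-linear, because it is $\OB_{\dR}$-linear before completion. The inclusion $\OB_{\dR}\to\OB_{\pdR}$ is the map $b\mapsto 1\otimes b$. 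Since $\delta(1)=0$, the composite of the two maps is zero.

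Exactness and strictness are local, so I would check them on a cover $\{U_i\}$ on which a choice $t_i$ of $t$ is available. Over such $U$, the proof of Lemma~\ref{lem:pdR-flat-dR} gives a filtered identification
\[
  \bB_{\pdR}|_U\cong\bigoplus_{m\geq0}\bB_{\dR}|_U\,\ell^m .
\]
I then need a description of $\OB_{\pdR}|_U$. Before completion, the tensor product is $\bigoplus_{m}\OB_{\dR}\ell^m$ with $F_n=\bigoplus_m F_n\OB_{\dR}\ell^m$. Its completion is the set of series $\sum_m b_m\ell^m$ in which, for every $N$, all but finitely many $b_m$ lie in $F_{-N}$. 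The same formula holds with $\ell$ replaced by the completed sections of $\OB_{\dR}$; this is already implicit in Proposition~\ref{prop:usolid-coho-2}, where $\gr_r\OB_{\pdR}$ is written as polynomials in $\ell$.

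In these coordinates $\delta\bigl(\sum b_m\ell^m\bigr)=\sum (m+1)b_{m+1}\ell^m$. Its kernel consists exactly of the constant terms, which is the image of $\OB_{\dR}$. For surjectivity I would integrate termwise: $\sum c_m\ell^m\mapsto \sum \frac{c_m}{m+1}\ell^{m+1}$. The factors $1/(m+1)$ lie in $k_0$ and have filtration degree zero. Hence this integration operator preserves each $F_n$ and preserves the convergence condition, so it gives a filtered section of $\delta$. It follows that $\delta(F_n\OB_{\pdR})=F_n\OB_{\pdR}$, which is strictness of $\delta$ on the right. The inclusion is strict because $F_n\OB_{\pdR}\cap\OB_{\dR}=F_n\OB_{\dR}$ by the coordinatewise description. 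Exactness of the sequence on each graded piece follows the same way.

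The main obstacle is to justify the coordinatewise description of the completed tensor product. One has to check that completion and sheafification commute with the decomposition $\bigoplus_m(-)\ell^m$. To handle this, I would work filtration piece by piece with the quotients $F_n/F_{n-j}$. On these quotients the completion does nothing and the sum is an honest direct sum, so the sequence is exact there and split by the integration operator above. Passing to $\varprojlim_j$ then preserves exactness, by Mittag--Leffler and the surjectivity of the transition maps. Finally, sheafification is exact.
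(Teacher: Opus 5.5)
Your proposal is correct and follows essentially the paper's argument. The paper also proves the lemma by a local computation: it uses the decomposition $\bB_{\pdR}\cong\bigoplus_{m\geq0}\bB_{\dR}\,\ell^m$ from Lemma~\ref{lem:pdR-flat-dR}, together with the local descriptions in Proposition~\ref{prop:localdescription-of-OBdR+-sheaves-recII} and Corollary~\ref{prop:localdescription-of-OBpdR-sheaves-recII}, so that $\delta=d/d\ell$ has kernel the constant terms. Your version needs only a local choice of $t$ rather than toric coordinates, and the termwise integration section makes strictness, and its survival through completion and sheafification, explicit.
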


\begin{proof}
  This follows from a local computation, using
  Proposition~\ref{prop:localdescription-of-OBdR+-sheaves-recII},
  Corollary~\ref{prop:localdescription-of-OBpdR-sheaves-recII},
  and Lemma~\ref{lem:pdR-flat-dR}.
\end{proof}

\begin{construction}\label{construction:delta-trivial-on-M}
  Given $\mathcal{M}^{\bullet}\in D_{F,\coh}^{b}(\cD)$, the derivation $\delta$
  induces an operator
  \[
  \delta_{\Sol}\colon\Sol_F(\cal M^{\bullet})\to\Sol_F(\cal M^{\bullet})
  \]
  locally given by $(\delta_{\Sol}s)(n)=\delta(s(n))$.
  It induces an operator
  \[
    \delta_{\Hom}\colon
    R\shHom_{\bB_{\pdR}}^{F}
    \left(\Sol_F(\cal M^{\bullet}),\OB_{\pdR}\right)
    \to
    R\shHom_{\bB_{\pdR}}^{F}
    \left(\Sol_F(\cal M^{\bullet}),\OB_{\pdR}\right)
  \]
via the formula $\delta_{\Hom}(f):=\delta\circ f-f\circ\delta_{\Sol}$;
see \S\ref{sec:modules-derivations} for details.
We remark that both $\delta_{\Sol}$ and $\delta_{\Hom}$
are not $\mathbb{B}_{\pdR}$-linear, but filtration-preserving $\delta$-connections.

Since $\delta$ commutes with the $\nu^{-1}\cD$-action, applying
$R\nu_*$ gives a $\cD$-linear operator $\delta_{\Rec}$ on
\begin{equation*}
    R\nu_{*}R\shHom_{\bB_{\pdR}}^{F}
    \left(\Sol_F(\cal M^{\bullet}),\OB_{\pdR}\right)
    =\Rec_{F}\left(\Sol_{F}\mathcal{M}^{\bullet}\right)
    \stackrel{\text{\ref{thm:filtered-reconstruction-recII}}}{\cong}\mathcal{M}^{\bullet}.
\end{equation*}
\end{construction}

\begin{lem}\label{lem:delta-trivial-on-M}
  With the notation as in Construction~\ref{construction:delta-trivial-on-M},
  $\delta_{\Rec}=0$.
\end{lem}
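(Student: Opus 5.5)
The strategy is to reduce to $\cal M^\bullet=\cD$, with its filtration shifts, and then use Theorem~\ref{thm:rho-iso}. First, the construction $\cal M^\bullet\mapsto\delta_{\Rec}$ is natural in $\cal M^\bullet$. It is also compatible with shifts, cones and direct summands: $\delta_{\Sol}$ and $\delta_{\Hom}$ are built functorially from $\delta$ on the fixed coefficient $\OB_{\pdR}$, and $\delta$ commutes with the $\nu^{-1}\cD$-action. The statement is local on $X$. By Proposition~\ref{prop:coherent-D-is-perfect-recII}, $\cal M^\bullet$ is locally perfect, so locally it is built from graded shifts of $\sR\cD$ by finitely many cones and summands.

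Vanishing of an endomorphism does not pass through cones in general. We avoid this by not arguing object by object. Instead we identify $\delta_{\Rec}$ with a natural transformation of the identity functor on locally perfect objects that is determined by its value on $\cD$. The identification goes as follows. Under Theorem~\ref{thm:filtered-reconstruction-recII} and the adjunction \eqref{eq:filtered-Sol-Rec-adjunction}, $\delta_{\Rec}$ corresponds to the endomorphism of $\Rec_F\Sol_F$ induced by conjugating the coefficient $\OB_{\pdR}$ with $\delta$. On the generator this is precisely the derivation $\Phi\mapsto\delta\circ\Phi-\Phi\circ\delta$ acting on
\[
R\nu_*R\shHom^F_{\bB_{\pdR}}(\OB_{\pdR},\OB_{\pdR})\cong\cD ,
\]
where the copy of $\delta$ acting on $\Sol_F(\cD)=\OB_{\pdR}$ is $\delta_{\Sol}=\delta$. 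The resulting operator on $\cal M^\bullet$ is then $\cal M^\bullet\otimes^L_{\cD}(-)$ applied to the operator on the bimodule $\cD$. So it suffices to show that this bimodule operator vanishes.

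For $\cal M=\cD$, I check that $[\delta,\rho(P)]=0$ for every $P\in\cD$, where $\rho$ is the isomorphism of Theorem~\ref{thm:rho-iso}. This is the statement that $\delta$ commutes with the $\nu^{-1}\cD$-action on $\OB_{\pdR}$, which I verify locally. By Corollary~\ref{prop:localdescription-of-OBpdR-sheaves-recII}, over $\widetilde X_C$ we have $\OB_{\pdR}=\bB_{\dR}[\ell]\widehat\otimes k_0[T]$. Here $\partial_i$ acts $\bB_{\pdR}$-linearly through the $T$-variables with $\partial_i(T_j)=t^{-1}\delta_{ij}$, while $\delta=d/d\ell$ is $\OB_{\dR}$-linear. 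Both are continuous, so they commute on the dense subring $\bB_{\dR}[\ell][T]$. The one point to watch is that $t$ and $\ell=\log t$ are independent variables here. Indeed, $\delta(t)=0$ because $\delta$ is $\bB_{\dR}$-linear, which Lemma~\ref{lem:nilpotentoperator} guarantees. Hence $\delta$ kills the image of $\cD$ under $\rho$ in the endomorphism sheaf, and $\delta_{\Rec}=0$ on $\cD$ and all of its filtration shifts.

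The main obstacle is the middle step: showing rigorously that $\delta_{\Rec}$ on a general $\cal M^\bullet$ is induced from its value on the generator $\cD$, rather than merely vanishing on each building block. My first attempt would be to write
\[
\Rec_F\Sol_F(\cal M^\bullet)\cong R\shHom_{\cD}\bigl(\cal M^{\bullet\vee}\ \text{or}\ \cal M^\bullet,\ \cD\text{-bimodule}\bigr)
\]
via local perfectness. Concretely, for perfect $\cal M^\bullet$ one has $\Rec_F\Sol_F(\cal M^\bullet)\cong R\nu_*\cE\otimes^L_{\cD}\cal M^\bullet$, where $\cE=R\shHom^F(\OB_{\pdR},\OB_{\pdR})$. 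Under this identification $\delta_{\Rec}$ becomes $(\text{commutator with }\delta\text{ on }R\nu_*\cE)\otimes\mathrm{id}$, as one checks on free modules. Since that commutator vanishes on $R\nu_*\cE\cong\cD$, including in higher degrees by Lemma~\ref{lem:rho-affinoid-computation}, the lemma follows.
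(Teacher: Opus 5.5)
Your check for $\cal M^\bullet=\cD$ is correct, but the passage to general $\cal M^\bullet$ is a genuine gap.

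First, the claim that the statement is local on $X$ is not justified. Vanishing of a morphism in a derived category of sheaves is not a local property: a morphism that vanishes on each member of a cover can be globally nonzero, e.g.\ through $H^1$ of a $\shHom$-sheaf. So any identification you use must be global.

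Second, and more seriously, you only assert that $\delta_{\Rec}$ becomes $[\delta,-]\otimes^L\mathrm{id}$ on $R\nu_*\cE\otimes^L_{\cD}\cal M^\bullet$, ``as one checks on free modules''. A check on free modules in the derived category does not propagate through cones, for exactly the reason you raise yourself: natural transformations between triangulated functors are not determined by their values on generators. To make your route work you would need a global chain-level model. That means a resolution of $\OB_{\pdR}$ compatible simultaneously with the $\nu^{-1}\cD$-action and with $\delta$, together with a check that the tensor--Hom and projection-formula isomorphisms intertwine the $\delta$-operators. None of this is done.

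The missing idea is that no reduction to the generator is needed here, because it was already carried out in proving Theorem~\ref{thm:filtered-reconstruction-recII}. The morphism $\rho^F_{\cal M^\bullet}$ is evaluation, $n\mapsto\mathrm{ev}_n$ with $\mathrm{ev}_n(s)=(-1)^{|n||s|}s(n)$. Evaluation is tautologically horizontal:
$(\delta_{\Hom}\mathrm{ev}_n)(s)=(-1)^{|n||s|}\bigl(\delta(s(n))-(\delta_{\Sol}s)(n)\bigr)=0$
by the very definition of $\delta_{\Sol}$. This holds on complexes computing these objects, for every $\cal M^\bullet$ and globally, so $\delta_{\Rec}\circ\rho^F_{\cal M^\bullet}=0$. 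Since $\rho^F_{\cal M^\bullet}$ is an isomorphism, $\delta_{\Rec}=0$; this is the paper's proof.

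Your computation $[\delta,\rho(P)]=0$ is exactly this identity for $\cal M^\bullet=\cD$, transported along $\Sol_F(\cD)\cong\OB_{\pdR}$. The commutation of $\delta$ with $\cD$ is needed only to make $\delta_{\Sol}$ and $\delta_{\Rec}$ well defined. Once you notice that the identity holds verbatim for arbitrary $\cal M^\bullet$, the cone problem disappears.
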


\begin{proof}
The reconstruction isomorphism (Theorem~\ref{thm:filtered-reconstruction-recII})
\[
  \rho_{\cal M^\bullet}^{F}\colon
  \cal M^\bullet
  \isomap
  \Rec_F\left(\Sol_F(\cal M^\bullet)\right)
\]
is induced by evaluation. For homogeneous local sections $n$ and $s$ of complexes
computing these objects, write
$\operatorname{ev}_n(s)=(-1)^{|n||s|}s(n)$,
where $|n|$ and $|s|$ denote the cohomological degrees of $n$ and $s$, respectively.
Then
\[
  \begin{aligned}
    (\delta_{\Hom}\operatorname{ev}_n)(s)
    &=
    (-1)^{|n||s|}
    \left(
      \delta(s(n))-(\delta_{\Sol}s)(n)
    \right)\\
    &=0.
  \end{aligned}
\]
Consequently,
$\delta_{\Rec}\circ\rho_{\cal M^\bullet}^{F}=0$.
Since $\rho_{\cal M^\bullet}^{F}$ is an isomorphism, we obtain
$\delta_{\Rec}=0$.
\end{proof}

\begin{lem}\label{lem--thm:comparison-Scholze-deRham}
For a lisse $\bZ_p$-sheaf $\bL$ on $X_{\et}$, put
\[
    D_{\dR}(\bL)
    :=R\nu_*\left(
      \widehat{\bL}\otimes_{\widehat{\bZ}_p}\OB_{\dR}
    \right),
    \qquad
    D_{\pdR}(\bL)
    :=R\nu_*\left(
      \widehat{\bL}\otimes_{\widehat{\bZ}_p}\OB_{\pdR}
    \right),
  \]
  regarded as objects of $D_F(\cD)$ with their natural
  filtrations.
  Write $D_{\dR}^0(\bL):=H^0D_{\dR}(\bL)$ for the zeroth cohomology, equipped with the filtration
  \[
  F_nD_{\dR}^0(\bL)
  :=\nu_*\left(
    \widehat{\bL}\otimes_{\widehat{\bZ}_p}F_n\OB_{\dR}
  \right).
  \]
  If $\delta$ acts trivially on $D_{\pdR}(\bL)$ in
  $D_F(\cD)$, then the canonical morphism
  \[
    D_{\dR}^0(\bL)\isomap D_{\pdR}(\bL)
  \]
  is an isomorphism in $D_F(\cD)$.
\end{lem}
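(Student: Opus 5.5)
The plan is to tensor the strict short exact sequence of Lemma~\ref{lem:nilpotentoperator-ses} with $\widehat{\bL}$ and push it forward along $\nu$. Since $\widehat{\bL}$ is locally free of finite rank over $\widehat{\bZ}_p$, tensoring preserves strict exactness. This gives a strict short exact sequence
\[
  0\to\widehat{\bL}\otimes_{\widehat{\bZ}_p}\OB_{\dR}
  \to\widehat{\bL}\otimes_{\widehat{\bZ}_p}\OB_{\pdR}
  \stackrel{\delta}{\to}\widehat{\bL}\otimes_{\widehat{\bZ}_p}\OB_{\pdR}\to0
\]
of filtered $\nu^{-1}\cD$-modules; it is $\nu^{-1}\cD$-linear because $\delta$ commutes with the connection. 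Applying $R\nu_*$ then yields a distinguished triangle
\[
  D_{\dR}(\bL)\to D_{\pdR}(\bL)\stackrel{\delta}{\to}D_{\pdR}(\bL)\stackrel{+1}{\to}
\]
in $D_F(\cD)$.

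By hypothesis the middle arrow $\delta$ is zero in $D_F(\cD)$. A triangle $A\to B\xrightarrow{0}B\to A[1]$ splits, so
\[
  D_{\dR}(\bL)\cong D_{\pdR}(\bL)\oplus D_{\pdR}(\bL)[-1],
\]
and under this splitting the map $D_{\dR}(\bL)\to D_{\pdR}(\bL)$ is the projection onto the first summand. What remains is to identify $D_{\dR}(\bL)$ with $D_{\dR}^0(\bL)$ modulo this decomposition. I expect the result to be a canonical isomorphism $D_{\dR}^0(\bL)\to D_{\dR}(\bL)\to D_{\pdR}(\bL)$.

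For this I would compute cohomology locally on a toric chart. By Proposition~\ref{prop:localdescription-of-OBdR+-sheaves-recII} and the dévissage in the proof of Proposition~\ref{prop:usolid-coho-2}, each filtered piece $R\nu_*(\widehat{\bL}\otimes F_n\OB_{\dR})$ has cohomology only in degrees $0$ and $1$. By Scholze's computation \cite[Proposition~6.16]{Sch13pAdicHodge}, degree $1$ is the $\log\chi_{\cycl}$-twist of degree $0$. Here the arithmetic Galois direction enters, since $X$ is over $k$.

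The splitting implies that $H^1D_{\dR}(\bL)\cong H^0D_{\pdR}(\bL)$ and $H^q D_{\pdR}(\bL)=0$ for $q\ge1$. The last vanishing follows because $H^{q+1}D_{\dR}(\bL)=0$ for $q\geq1$ forces $H^q D_{\pdR}=0$, which then feeds back through the splitting. Concretely, the long exact sequence reads
\[
  0\to H^0D_{\dR}\to H^0D_{\pdR}\stackrel{\delta}{\to}H^0D_{\pdR}\to H^1D_{\dR}\to H^1D_{\pdR}\to\cdots.
\]
With $\delta=0$ in cohomology, this gives $H^0D_{\dR}\cong H^0D_{\pdR}$ and $H^qD_{\pdR}=0$ for $q\geq1$ by induction from the top degree. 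Hence $D_{\pdR}(\bL)$ is concentrated in degree $0$, and the canonical map $D^0_{\dR}(\bL)=H^0D_{\dR}(\bL)\to H^0D_{\pdR}(\bL)\cong D_{\pdR}(\bL)$ is an isomorphism. This map is induced by $\tau_{\le0}D_{\dR}\to D_{\dR}\to D_{\pdR}$.

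The main obstacle is doing all of this filtered. Each step must hold for every $F_n$ simultaneously, so that the isomorphisms are strict and the result holds in $D_F(\cD)$ rather than only after forgetting filtrations. Via the Rees equivalence I would run the same long-exact-sequence argument degreewise on $R\nu_*(\widehat{\bL}\otimes F_n\OB_{\pdR})$. The hypothesis that $\delta$ vanishes in $D_F(\cD)$ then gives vanishing on each $F_n$. The boundedness input, namely that $R\nu_*$ of each $F_n$ piece lives in degrees $[0,d+1]$, is needed so that the downward induction starts.
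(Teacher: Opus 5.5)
The skeleton of your argument is the paper's. You tensor the sequence of Lemma~\ref{lem:nilpotentoperator-ses} with $\widehat{\bL}$, apply $R\nu_*$ on each $F_n$, and use $\delta=0$. The long exact sequence then breaks into short exact sequences
\[
0\to H^q\bigl(F_nD_{\pdR}(\bL)\bigr)\to H^{q+1}\bigl(F_nD_{\dR}(\bL)\bigr)\to H^{q+1}\bigl(F_nD_{\pdR}(\bL)\bigr)\to0 .
\]
So everything reduces to the vanishing $R^q\nu_*(\widehat{\bL}\otimes_{\widehat{\bZ}_p}F_n\OB_{\dR})=0$ for $q\geq2$. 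No downward induction is needed. A bound $[0,d+1]$ alone would not suffice, since it leaves $H^{d+1}(F_nD_{\dR})\cong H^d(F_nD_{\pdR})$ undetermined.

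\textbf{The gap.} Your justification of this vanishing does not work. Proposition~\ref{prop:localdescription-of-OBdR+-sheaves-recII}, the d\'evissage of Proposition~\ref{prop:usolid-coho-2} and \cite[Proposition~6.16]{Sch13pAdicHodge} all concern trivial coefficients. On $\widetilde X_C$ the sheaf $\widehat{\bL}$ is not trivial. Hence $\widehat{\bL}\otimes\gr_r\OB_{\dR}$ is a nontrivial semilinear representation of $\bZ_p(1)^d$ and of the arithmetic Galois group. Its cohomology is governed by the geometric Sen operator (Higgs field) of $\bL$, and a priori the geometric direction contributes up to degree $d$. The lemma concerns an arbitrary lisse $\bL$ and is applied before de Rhamness is known, so you cannot reduce to the trivial case.

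Your side remark that degree $1$ is the $\log\chi_{\cycl}$-twist of degree $0$ is in fact false in general. Take $X=\Sp k$ and $\bL$ the extension of $\bZ_p$ by $\bZ_p$ defined by the homomorphism $\log\chi_{\cycl}$. Then $H^0$ and $H^1$ of $\widehat{\bL}\otimes F_0\OB_{\dR}$ are both $k$, but cup product with $\log\chi_{\cycl}$ from $H^0$ to $H^1$ is zero. You never use this remark, but it shows the trivial-coefficient picture being transported where it does not apply.

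\textbf{The missing input.} As in the paper, it is the geometric Riemann--Hilbert theorem \cite[Theorem~3.8(i)]{LiuZhuRH2017}. Work \'etale locally so that $\Gamma=\Gal(k(\mu_{p^\infty})/k)\cong\bZ_p$, and put $K=\widehat{k(\mu_{p^\infty})}$. Then the cohomology of $\widehat{\bL}\otimes F_0\OB_{\dR}$ over $U_K$ is concentrated in degree $0$ and equals $M=F_0\mathcal{RH}(\bL[1/p])(U_K)$; this is where the nilpotence of the geometric Sen operator is hidden. Cartan--Leray identifies $R\Gamma(U_{\proet},\widehat{\bL}\otimes F_0\OB_{\dR})$ with $R\Gamma_{\mathrm{cont}}(\Gamma,M)$. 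This is computed by $[M\xrightarrow{\gamma-1}M]$, because $M=\varprojlim_r M/t^rM$ with surjective transition maps. The Tate twist $F_n\OB_{\dR}\cong F_0\OB_{\dR}(-n)$ handles general $n$.

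With this vanishing in hand, the rest of your argument goes through filtrationwise exactly as you describe.
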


\begin{proof}
We claim that
\[
  R^q\nu_*\left(
    \widehat{\bL}\otimes_{\widehat{\bZ}_p}F_n\OB_{\dR}
  \right)=0
  \qquad(n\in\bZ,\ q\geq2).
\]
Since $F_n\OB_{\dR}\cong F_0\OB_{\dR}(-n)$, Tate twisting
reduces this to $n=0$. Working \'etale locally, we may assume
that $\Gamma:=\operatorname{Gal}(k(\mu_{p^\infty})/k)\cong\bZ_p$.
Put $K:=\widehat{k(\mu_{p^\infty})}$ and choose a topological
generator $\gamma$ of $\Gamma$.

By \cite[Theorem~3.8(i)]{LiuZhuRH2017}
\footnote{Although \cite{LiuZhuRH2017} assumes that $k$ is a
finite extension of $\bQ_p$, the results used here remain valid
for complete discretely valued fields of mixed characteristic
$(0,p)$ with perfect residue field; see
\cite[Notation and conventions]{MR4536903}.}
 and Cartan--Leray,
on a sufficiently small affinoid $U$ we have
\[
  R\Gamma\left(
    U_{\proet},
    \widehat{\bL}\otimes_{\widehat{\bZ}_p}F_0\OB_{\dR}
  \right)
  \cong R\Gamma_{\mathrm{cont}}(\Gamma,M),
  \qquad
  M:=F_0\mathcal{RH}(\bL[1/p])(U_K),
\]
where $\mathcal{RH}$ denotes the geometric Riemann--Hilbert
functor of loc.\ cit.
The module $M$ is the inverse limit of its Banach quotients
$M/t^rM$, with surjective transition maps. The standard
procyclic cohomology calculation gives
\[
  R\Gamma_{\mathrm{cont}}(\Gamma,M)
  \cong
  R\varprojlim_r
  \left[M/t^rM\xrightarrow{\gamma-1}M/t^rM\right]
  \cong
  \left[M\xrightarrow{\gamma-1}M\right].
\]
The last isomorphism follows from the surjectivity of the
transition maps in each degree. This complex is concentrated
in degrees $0$ and $1$, proving the claim.

Since the period sheaves are $\widehat{\bQ}_p=\widehat{\mathbb{Z}}_{p}[1/p]$-modules
and $\widehat{\bL}[1/p]$ is locally free over
$\widehat{\bQ}_p$, tensoring and applying $R\nu_*$
to the strict short exact sequence in
Lemma~\ref{lem:nilpotentoperator-ses} gives, for every $n$,
a distinguished triangle
  \[
    F_nD_{\dR}(\bL)
    \to F_nD_{\pdR}(\bL)
    \xrightarrow{\delta} F_nD_{\pdR}(\bL)
    \to F_nD_{\dR}(\bL)[1].
  \]
  By hypothesis, the middle arrow is zero in the derived
  category. For $q\geq1$, the associated long exact
  sequence contains
  \[
    H^q\left(F_nD_{\pdR}(\bL)\right)
    \xrightarrow{0}
    H^q\left(F_nD_{\pdR}(\bL)\right)
    \to
    H^{q+1}\left(F_nD_{\dR}(\bL)\right)=0.
  \]
  Thus $H^q(F_nD_{\pdR}(\bL))=0$ for $q\geq1$.
  Negative cohomology also vanishes, since
  $F_nD_{\pdR}(\bL)$ is the derived direct image of a sheaf.

  The beginning of the same long exact sequence now gives
  the canonical isomorphism
  \[
    F_nD_{\dR}^0(\bL)
    =
    H^0\left(F_nD_{\dR}(\bL)\right)
    \xrightarrow{\sim}
    H^0\left(F_nD_{\pdR}(\bL)\right).
  \]
  Consequently, the canonical comparison is a
  quasi-isomorphism on every filtration piece.
  These comparisons are compatible with the filtration
  maps and the $\cD$-action, proving the asserted
  isomorphism in $D_F(\cD)$.
\end{proof}

\begin{thm}\label{thm:comparison-Scholze-deRham}
  Let $\bL$ be a lisse $\bZ_p$-sheaf on $X_{\et}$. The following are equivalent:
  \begin{itemize}
    \item[(i)]
      $\bL$ is de Rham in the sense
      of~\cite[Definition 8.3]{Sch13pAdicHodge}.
    \item[(ii)]
      $\bL$ is de Rham in the sense of
      Definition~\ref{def:deRham-zc-complex}.
  \end{itemize}
\end{thm}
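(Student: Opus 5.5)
(i)$\Rightarrow$(ii) is essentially formal, and I would prove it first. Let $(\cal E,\nabla,F_\bullet)$ witness that $\bL$ is de Rham in Scholze's sense. By the remark after Theorem~\ref{introthm:main}, $\cal E$ is a good filtered $\cD$-module, so $\cal M:=\cal E^\vee\in D_{F,\coh}^b(\cD)$. The filtered Spencer resolution identifies
\[
\SolB_{\dR}^{+}(\cal E^\vee)\cong F_0\bigl(\nu^{-1}\cal E\widehat\otimes_{\nu^{-1}\cO_X}\OB_{\dR}\bigr)^{\nabla=0}.
\]
This sheaf is a $\bB_{\dR}^+$-local system in degree zero by \cite[Theorem~7.6]{Sch13pAdicHodge}, so the derived completed base change to $\bB_{\dR}$ is underived. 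Scholze's filtered isomorphism is then exactly \eqref{eq:deRham-zc-period-identification} with $\cal K^\bullet=\bL$, using $\bL_{\proet}=\widehat{\bL}$.

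For (ii)$\Rightarrow$(i), start from $\cal M^\bullet$ and an isomorphism \eqref{eq:deRham-zc-period-identification} with $\cal K^\bullet=\bL$. Extend scalars along $\bB_{\dR}\to\bB_{\pdR}$ and use Lemma~\ref{lem:SolBdR--Sol--comparison}. This gives
\[
\Sol_F(\cal M^\bullet)\cong\widehat{\bL}\otimes_{\widehat{\bZ}_p}\bB_{\pdR}\quad\text{in } D_F(\bB_{\pdR}).
\]
Apply $\Rec_F$ and Theorem~\ref{thm:filtered-reconstruction-recII}. Since $\widehat{\bL}[1/p]$ is locally free, $R\shHom^F_{\bB_{\pdR}}(\widehat{\bL}\otimes\bB_{\pdR},\OB_{\pdR})\cong\widehat{\bL}^\vee\otimes\OB_{\pdR}$, so
\[
\cal M^\bullet\cong\Rec_F\bigl(\widehat{\bL}\otimes\bB_{\pdR}\bigr)\cong D_{\pdR}(\bL^\vee).
\]
The next task is to compare the operator $\delta_{\Rec}$ on the left with the operator induced by $\delta$ on $D_{\pdR}(\bL^\vee)$. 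For this I would transport $\delta_{\Sol}$ through the isomorphism and check that it becomes $1\otimes\delta$ on $\widehat{\bL}\otimes\bB_{\pdR}$. This needs the isomorphism in (ii) to be chosen $\delta$-equivariantly. The cleanest way is to observe that the base change of a $\bB_{\dR}$-isomorphism to $\bB_{\pdR}$ automatically commutes with $\delta$ on the $\bB_{\pdR}$ factor. Here $\Sol_F(\cal M^\bullet)$ is written as $\SolB_{\dR}^+\widehat\otimes\bB_{\pdR}$ and $\delta_{\Sol}$ acts through the second factor, which I would verify from Lemma~\ref{lem:nilpotentoperator-ses} and the description of $\delta_{\Sol}$. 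Lemma~\ref{lem:delta-trivial-on-M} then shows that $\delta$ acts trivially on $D_{\pdR}(\bL^\vee)$. Lemma~\ref{lem--thm:comparison-Scholze-deRham} therefore gives $D_{\pdR}(\bL^\vee)\cong D^0_{\dR}(\bL^\vee)$, so $\cal M^\bullet$ is concentrated in degree zero and equals the filtered module $\cal N:=D^0_{\dR}(\bL^\vee)$.

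Finally I would show that $\cal N$ is a filtered vector bundle with integrable connection satisfying Griffiths transversality, and that it witnesses (i) for $\bL$, with $\cal E:=\cal N^\vee$. Coherence over $\cD$ with a good filtration comes from membership in $D^b_{F,\coh}(\cD)$. Each $F_n\cal N=\nu_*(\widehat{\bL}^\vee\otimes F_n\OB_{\dR})$ is a coherent $\cO_X$-module by \cite{LiuZhuRH2017}, because it is the relative $D_{\dR}$ of a lisse sheaf. Hence $\cal N=\bigcup_n F_n\cal N$ is $\cO$-coherent, and a $\cD$-module that is $\cO$-coherent is a vector bundle with integrable connection. Griffiths transversality is the goodness of the filtration. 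The natural map $\nu^{-1}\cal N\otimes\OB_{\dR}\to\widehat{\bL}^\vee\otimes\OB_{\dR}$ must then be shown to be a filtered isomorphism. For this I would apply $F_0(\cdot)^{\nabla=0}$ and compare with $\SolB_{\dR}^+(\cal N)$, which is identified with $\widehat{\bL}\otimes\bB_{\dR}$ via (ii) and Lemma~\ref{lem:forget-fil-recover-fil}. Dualising, as in the Remark after Theorem~\ref{introthm:main}, gives Scholze's condition for $\cal E=\cal N^\vee$.

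The main obstacle is the $\delta$-equivariance bookkeeping, together with ensuring that the filtrations match strictly in this last comparison. I expect that the strictness can be obtained from Theorem~\ref{thm:unfiltered-reconstruction-recII}, which recovers the filtration from the $F_0$-data.
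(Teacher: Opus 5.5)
Up to $\cM^\bullet\cong D_{\pdR}(\bL^\vee)\cong D^0_{\dR}(\bL^\vee)$, your argument is the paper's, including (i)$\Rightarrow$(ii) and the $\delta$-bookkeeping. The gap is in your final paragraph, where you try to show directly that $\cN:=D^0_{\dR}(\bL^\vee)$ witnesses (i).

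First, coherence of $\cN$ does not follow from coherence of the $F_n\cN$. An increasing union of $\cO$-coherent subsheaves need not be coherent: $\cD=\bigcup_n\cD_{\leq n}$. You need \cite[Theorem~3.9(i)]{LiuZhuRH2017} applied to $D_{\dR}(\bL^\vee)$ itself.

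More seriously, goodness of $F_\bullet\cN$ gives Griffiths transversality, but not a filtration by locally free direct summands. A filtered bundle in Scholze's sense requires this. For example, $\cO_X$ with $F_{-1}=0$, $F_0=\cI_x$, $F_1=\cO_X$ is good, but $\gr_1$ is a skyscraper. Without this property you cannot use the Spencer identification of the Remark, or \cite[Theorem~7.6]{Sch13pAdicHodge}, for $\cN$ with this filtration.

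Your comparison step then fails. Applying $F_0(\cdot)^{\nabla=0}$ only controls horizontal sections, and $\SolB_{\dR}^+(\cN)$ sees $\cN^\vee$, not $\cN$. To conclude that $\nu^{-1}\cN\widehat\otimes\OB_{\dR}\to\widehat\bL^\vee\otimes\OB_{\dR}$ is a filtered isomorphism, you must know that $\nu^{-1}\cN\widehat\otimes\OB_{\dR}$ is spanned by horizontal sections of the right rank. That is Theorem~7.6 again, i.e.\ essentially de Rhamness. Theorem~\ref{thm:unfiltered-reconstruction-recII} recovers $\cN$ from $\Sol(\cN)$ and gives no such information. Conversely, if you did know the filtration was by locally free direct summands, no comparison map would be needed: (ii) together with the Spencer identification is literally Scholze's condition for $\cE=\cN^\vee$.

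The missing idea is to discard the filtration at this point. The underlying bundle of $\cN$ with the \emph{trivial} filtration is a legitimate filtered bundle. So Theorem~7.6 of \cite{Sch13pAdicHodge} shows that $(\nu^{-1}\cN^\vee\widehat\otimes\OB_{\dR})^{\nabla=0}$ is a $\bB_{\dR}$-local system of rank $\operatorname{rk}\cN$. Forgetting filtrations in (ii) and taking $H^0$ identifies it with $\widehat\bL\otimes\bB_{\dR}$, so $\operatorname{rk}D^0_{\dR}(\bL^\vee)=\operatorname{rk}\bL[1/p]$. Liu--Zhu's rank criterion, applied at classical points via pullback compatibility and propagated by rigidity \cite[Theorem~3.9(ii)--(iv)]{LiuZhuRH2017}, then shows that $\bL^\vee$, hence $\bL$, is de Rham. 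The good filtered structure and the filtered comparison you were trying to build by hand then come out of Liu--Zhu's theorem, rather than having to be proved beforehand.
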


\begin{proof}
  Suppose first that $\bL$ is de Rham in the sense
  of~\cite[Definition 8.3]{Sch13pAdicHodge}. There exist a filtered
  vector bundle $\cE$ with integrable connection satisfying Griffiths
  transversality and a filtered isomorphism
  \begin{equation*}
    F_0\left(
      \nu^{-1}\cE
      \widehat\otimes_{\nu^{-1}\cO}\OB_{\dR}
    \right)^{\nabla=0}
    \otimes_{\bB_{\dR}^{+}}\bB_{\dR}
    \cong
    \widehat{\bL}\otimes_{\widehat{\bZ}_p}\bB_{\dR}.
  \end{equation*}
  The filtered Spencer resolution and the Poincar\'e lemma identify
  the left-hand side with
  $\SolB_{\dR}^{+}(\cE^\vee)
    \widehat{\otimes}_{\bB_{\dR}^{+}}^{L}
    \bB_{\dR}$, where
  $\cE^\vee$ carries the dual filtration. This gives the isomorphism
  required in Definition~\ref{def:deRham-zc-complex}, with
  $\cal M^\bullet=\cE^\vee$.

    Conversely, suppose that $\bL$ is de Rham in the sense of
  Definition~\ref{def:deRham-zc-complex}. Then there exist
  $\cal M^\bullet\in D_{F,\coh}^b(\cD)$ and an isomorphism
  \begin{equation}\label{eq:dR-etale-comparison}
    \SolB_{\dR}^{+}(\cal M^\bullet)\widehat{\otimes}_{\bB_{\dR}^{+}}^L\bB_{\dR}
    \cong
    \widehat{\bL}\widehat{\otimes}_{\widehat{\bZ}_p}^L\bB_{\dR}
  \end{equation}
  in $D_F(\bB_{\dR})$. After extending scalars to $\bB_{\pdR}$,
  Lemma~\ref{lem:SolBdR--Sol--comparison} and
  Theorem~\ref{thm:filtered-reconstruction-recII} give
  \begin{equation*}
    \cal M^\bullet
    \cong
    \Rec_{F}\left(\Sol_{F}(\cal M^\bullet)\right)
    \cong
    \Rec_{F}\left(
      \widehat{\bL}
      \widehat\otimes_{\widehat{\bZ}_p}^L
      \bB_{\pdR}
    \right)
    \cong
    D_{\pdR}\left(\bL^\vee\right).
  \end{equation*}
  These isomorphisms identify the endomorphism 
  $\delta$ on $D_{\pdR}\left(\mathbb{L}^{\vee}\right)$
  and $\delta_{\Rec}$ on $\mathcal{M}^{\bullet}$
  as in Construction~\ref{construction:delta-trivial-on-M}.
  Indeed,~\eqref{eq:dR-etale-comparison} is defined over
  $\bB_{\dR}$, which $\delta$ annihilates.
  Its extension to
  $\bB_{\pdR}$ is therefore compatible with the coefficient
  $\delta$-connections, and hence with the induced commutator connections on $\Rec_{F}\left(\Sol_{F}(\cal M^\bullet)\right)$.
  
  It follows from Lemma~\ref{lem:delta-trivial-on-M} that the endomorphism
  $\delta$ on $D_{\pdR}\left(\mathbb{L}^{\vee}\right)$ is trivial.
  Thus Lemma~\ref{lem--thm:comparison-Scholze-deRham} applies, giving the isomorphism
  \begin{equation*}
    \cal M^\bullet
    \cong
    D_{\pdR}\left(\mathbb{L}^\vee\right)
    \cong
    D_{\dR}^{0}\left(\mathbb{L}^\vee\right)
  \end{equation*}
  in $D_{F}(\mathcal{D})$.
By \cite[Theorem~3.9(i)]{LiuZhuRH2017}, $\mathcal{M}^{\bullet}$
is represented by a vector bundle $\mathcal{M}$
with integrable connection, concentrated in degree zero.
Forgetting filtrations in~\eqref{eq:dR-etale-comparison}
and taking zeroth cohomology, the Spencer resolution gives
\[
  \left(
    \nu^{-1}\mathcal{M}^\vee
    \widehat\otimes_{\nu^{-1}\cO}\OB_{\dR}
  \right)^{\nabla=0}
  \cong
  \widehat{\bL}\otimes_{\widehat{\bZ}_p}\bB_{\dR}.
\]
By \cite[Theorem~7.6(ii)]{Sch13pAdicHodge}, applied to
$\cM^\vee$ with its trivial filtration, the left-hand side
is a $\bB_{\dR}$-local system of rank $\operatorname{rk}\cM$;
see also \cite[Remark~2.2.11]{MR4536903}. Consequently,
\[
  \operatorname{rk}\cM=\operatorname{rk}\bL[1/p].
\]
For every classical point $x$, pullback compatibility
in \cite[Theorem~3.9(ii)]{LiuZhuRH2017} identifies
$\cM_x$ with the de Rham realisation of
$\bL_x^\vee[1/p]$. The rank equality therefore shows that
$\bL_x^\vee[1/p]$, and hence $\bL_x[1/p]$, is de Rham.
Applying \cite[Theorem~3.9(iii),(iv)]{LiuZhuRH2017}
on each connected component proves the assertion.
\end{proof}

\appendix

\section{Derivations and derived Hom}\label{sec:modules-derivations}

Let $R$ be a commutative ring equipped with a derivation $\delta\colon R\to R$.
An \emph{$R$-module with $\delta$-connection} is an $R$-module $M$,
equipped with an additive map $\delta_{M}\colon M\to M$ satisfying the Leibniz rule
\begin{equation*}
  \delta_{M}(rm)=\delta(r)m+r\delta_{M}(m)
\end{equation*}
for all $r\in R$, $m\in M$.

Equivalently, $M$ is a left module over the \emph{skew-Ore extension}
$R[\partial;\delta]$ of $R$ which is an overring of $R$ that as a left $R$-module is
free on the symbols $\left\{\partial^{n}\colon n\geq0\right\}$ and satisfies
$\partial r-r\partial=\delta(r)$ and $\partial^{i}\partial^{j}=\partial^{i+j}$ for all $i,j\geq0$.

We denote the derived category of left $R[\partial;\delta]$-modules by $D_{\delta}(R)$.

\begin{lem}\label{lem:deltaconn-derivedcategory}
  For any $M^{\bullet},N^{\bullet}\in D_{\delta}(R)$, consider the derived homomorphisms
  $H^\bullet:=R\Hom_{R}(M^{\bullet},N^{\bullet})\in D(R)$ after forgetting the $\delta$-connections. Then,
  $H^\bullet$ naturally lifts to an object in $D_{\delta}(R)$.
\end{lem}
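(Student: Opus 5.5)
The plan is to realise $R\Hom_R$ through the skew-Ore extension $S:=R[\partial;\delta]$ itself, using the fact that $S$ is a Hopf-algebroid-like object. Concretely, I would resolve $M^\bullet$ in $D_\delta(R)$ by a K-projective complex $P^\bullet$ of $S$-modules. Since $S$ is free as a left $R$-module on the $\partial^n$, every projective $S$-module is projective over $R$, and K-projectivity over $S$ implies K-projectivity over $R$. The same holds for K-flatness: restriction along $R\to S$ preserves acyclic complexes, and a K-projective $S$-complex is a summand-limit of free $S$-complexes, which are free over $R$. Hence $\Hom_R^\bullet(P^\bullet,N^\bullet)$ computes $R\Hom_R(M^\bullet,N^\bullet)$ in $D(R)$.

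Next I would define the $\delta$-connection on the complex $\Hom_R^\bullet(P^\bullet,N^\bullet)$ by the commutator formula
\begin{equation*}
  \delta_{\Hom}(f):=\delta_N\circ f-f\circ\delta_P,
\end{equation*}
as already used in Construction~\ref{construction:delta-trivial-on-M}. Three routine checks follow. First, $\delta_{\Hom}(f)$ is $R$-linear, since $\delta_N(f(rp))-f(\delta(r)p+r\delta_P p)=r\,\delta_{\Hom}(f)(p)$. Second, the Leibniz rule $\delta_{\Hom}(rf)=\delta(r)f+r\delta_{\Hom}(f)$ holds. Third, $\delta_{\Hom}$ commutes with the Hom differential, because $\delta_P$ and $\delta_N$ commute with the differentials of $P^\bullet$ and $N^\bullet$; no signs arise, as $\delta_{\Hom}$ has degree zero. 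This makes $\Hom_R^\bullet(P^\bullet,N^\bullet)$ a complex of $S$-modules, and so gives an object of $D_\delta(R)$ whose image in $D(R)$ is $H^\bullet$.

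The remaining and main step is naturality and independence of choices. Let $P^\bullet\to P'^\bullet$ be a quasi-isomorphism of K-projective $S$-resolutions. It induces an $S$-linear map of Hom complexes, and this map is a quasi-isomorphism because it is one after forgetting to $D(R)$ and quasi-isomorphy is detected on underlying complexes. A quasi-isomorphism $N^\bullet\to N'^\bullet$ is handled the same way. Hence the lift is well defined up to unique isomorphism in $D_\delta(R)$ and is functorial in both variables. The subtle point I expect is checking that restriction to $R$ preserves K-projectivity. If that causes trouble, I would instead use K-injective resolutions of $N^\bullet$ in $S$-modules. The right adjoint $\Hom_R(S,-)$ is exact, since $S$ is free over $R$, so restriction preserves K-injectives, and the same commutator construction applies.
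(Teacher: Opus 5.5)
Your main argument is correct, but it resolves the other variable. You take a K-projective resolution $P^\bullet\to M^\bullet$ over $S=R[\partial;\delta]$. The paper instead takes a K-injective resolution $N^\bullet\to I^\bullet$ over $S$ and puts the commutator $\delta_I\circ f-f\circ\delta_M$ on $\Hom_R(M^\bullet,I^\bullet)$. The commutator construction and the three verifications are the same in both.

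The two routes use different halves of the structure of $S$:
\begin{itemize}
\item Your route needs restriction from $S$-modules to $R$-modules to preserve K-projectives. The clean reason is the adjunction $\Hom_R(P^\bullet,A^\bullet)\cong\Hom_S(P^\bullet,\Hom_R(S,A^\bullet))$. The coinduction functor $\Hom_R(S,-)$ is exact because $S$ is free as a \emph{left} $R$-module, which is exactly what the definition gives you. This should replace your vague ``summand-limit'' justification.
\item The paper's route needs restriction to preserve K-injectives. That comes from exactness of the \emph{left} adjoint $S\otimes_R-$, i.e.\ from $S$ being free as a \emph{right} $R$-module. This requires the extra observation $\partial^n r=\sum_k\binom{n}{k}\delta^k(r)\partial^{n-k}$.
\end{itemize}
What the paper's route buys is portability. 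The lemma is actually applied, via the remark following it and Construction~\ref{construction:delta-trivial-on-M}, to derived internal Hom of sheaves of graded or filtered modules on $X_{\proet}$. Sheaf categories have enough injectives but generally lack enough projectives, so your K-projective version does not transfer to the setting where the paper needs it.

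Separately, the justification of your fallback is backwards. Exactness of the right adjoint $\Hom_R(S,-)$ of restriction shows that restriction preserves K-\emph{projectives}; it is precisely what settles the ``subtle point'' of your main route. It says nothing about K-injectives. For the K-injective version you need $S\otimes_R-$ to be exact, i.e.\ right flatness of $S$, as above; that is what the paper uses.
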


\begin{proof}
  Choose a $K$-injective resolution $N^\bullet\to I^\bullet$
  in the category of $R[\partial;\delta]$-modules. Since $R[\partial;\delta]$ is free as a
  right $R$-module, extension of scalars $R[\partial;\delta]\otimes_R-$
  is exact. Its right adjoint, restriction of scalars,
  therefore preserves $K$-injective complexes. Hence
  \[
    R\operatorname{Hom}_R(M^\bullet,N^\bullet)
    \cong
    \operatorname{Hom}_R(M^\bullet,I^\bullet).
  \]
  Define the action of $\partial$ on the latter complex by
  \[
    \partial(f):=\delta_I\circ f-f\circ\delta_M.
  \]
  The Leibniz rules imply that $\partial(f)$ is $R$-linear
  and that
  \[
    \partial(rf)=\delta(r)f+r\partial(f).
  \]
  Moreover, $\partial$ commutes with the Hom differential.
  Thus this complex is naturally a complex of $R[\partial;\delta]$-modules.
  The construction is independent of the resolution
  and functorial in the derived category.
\end{proof}

The same argument applies to sheaves of graded commutative
rings and derived internal Hom, for a derivation of degree
zero. Applying this construction
to Rees modules gives the filtered version whenever the
derivations preserve the filtrations.

\begingroup
\sloppy
\bibliographystyle{amsplain}
\bibliography{D-modules_reconstruction}
\endgroup

\end{document}